\newtheorem{Th}{Theorem}[section] 
\newtheorem{Prop}{Proposition}[section]   
\newtheorem{Lem}{Lemma}[section]   
\newtheorem{Coro}{Corollary}[section]   
\newtheorem{Def}{Definition}  
\newtheorem{Rem}{Remark}[section]
\newcommand{\R}{\mathbb{R}}
\newcommand{\Z}{\mathbb{Z}}
\newcommand{\C}{\mathbb{C}}
\newcommand{\bbS}{\mathbb{S}}
\newcommand{\s}{{\rm S}}
\newcommand{\x}{\langle x\rangle}
\newcommand{\y}{\langle y\rangle}
\newcommand{\z}{\langle z\rangle}
\newcommand{\LL}{\mathcal{L}}
\newcommand{\A}{{\mathcal A}}
\newcommand{\K}{\mathcal{K}}
\newcommand{\U}{{\mathcal U}}
\newcommand{\Sz}{\mathcal{S}}
\newcommand{\p}{{\rm p}}
\newcommand{\tr}{\mathop{\rm tr}}
\newcommand{\re}{\mathop{\rm Re}}
\newcommand{\Div}{\mathop{\rm div}\nolimits}
\newcommand{\tz}{{\widetilde z}}
\begin{document}

\title{Spatial asymptotic expansions in the Navier-Stokes equation}   
 
\author{R. McOwen and P. Topalov} 

\maketitle

\begin{abstract}  
We prove that the Navier-Stokes equation for a viscous incompressible fluid in $\R^d$ is locally well-posed in spaces 
of functions allowing spatial asymptotic expansions with log terms as $|x|\to\infty$ of any a priori given order. 
The solution depends analytically on the initial data and time so that for any $0<\vartheta<\pi/2$ it can be 
holomorphically extended in time to a conic sector in $\C$ with angle $2\vartheta$ at zero.
We discuss the approximation of solutions by their asymptotic parts.
\end{abstract}   


\section{Introduction}\label{sec:introduction}
The evolution of the velocity field of an incompressible viscous fluid in $\R^d$ is described by the Navier-Stokes equation
\begin{equation}
\left\{
\begin{array}{l}\label{eq:NS}
u_t+u\cdot\nabla u=\nu \Delta u-\nabla\p,\quad\Div u=0,\\
u|_{t=0}=u_0,
\end{array}
\right.
\end{equation}
where $u(x,t)$ is the velocity field on $\R^d$ at time $t>0$, $u\cdot\nabla :=\sum_{j=1}^d u_j \frac{\partial}{\partial x_j}$ is the derivative in 
the direction of $u$, $\p(x,t)$ is the scalar pressure, and the constant $\nu>0$ is the viscosity of the fluid.  We are interested whether
for a given integer $N\ge 0$ and an initial data $u_0$ with an asymptotic expansion  
\begin{equation}\label{eq:asymptotic_expansion_schwartz}
u_0(x)=a_0(\theta)+\frac{a_1(\theta)}{r}+\cdots+\frac{a_N(\theta)}{r^N}+o\left(\frac{1}{r^N}\right)
\quad\hbox{as $|x|\to\infty$},
\end{equation}
where $r:=|x|$, $\theta:=x/|x|\in\s^{d-1}\equiv\big\{\,x\in\R^d\big|\,|x|=1\big\}$, 
and $a_k : \s^{d-1}\to\R^d$ ($0\le k\le N$) are bounded functions,
the solution of \eqref{eq:NS} admits an asymptotic expansion of the same form for each $0\le t\le T$ with $T>0$.
However, for the Euler equation ($\nu=0$), the asymptotic expansion of the solution for $t>0$ may involve log terms in the following form 
(cf. \cite{McOwenTopalov3})
\begin{equation}\label{eq:asymptotic_expansion}
a_0(\theta)+\frac{a^0_1(\theta)+a_1^1(\theta)\log r}{r}+\cdots+\frac{a^0_N(\theta)+\cdots+a_N^N(\theta)(\log r)^N}{r^N}+
o\left(\frac{1}{r^N}\right)\ \hbox{as $|x|\to\infty$}.
\end{equation}
In fact, it is shown in \cite{McOwenTopalov3} that the Euler flow is well-posed in a Banach space of (real) vector fields on $\R^d$ with 
asymptotics \eqref{eq:asymptotic_expansion}, which we denote by $\A^{m,p}_{N;0}$ where $m\ge 0$ is a regularity exponent
and $1<p<\infty$. In this paper, we shall obtain results on the spatial asymptotics of solutions for the Navier-Stokes equation. 

Note that spatial asymptotics of the form \eqref{eq:asymptotic_expansion_schwartz} appear {\em naturally} in the solutions of
the Euler and the Navier-Stokes equation due to the presence of the (non-local) pressure term in \eqref{eq:NS}. 
We refer the reader to the short discussion of related works at the end of this introduction. 
At this point we only note that there is a generic set of divergence free initial data $u_0$ in the Schwartz space $\Sz$ such that the solution $u$ of
the Euler equation has an asymptotic expansion of the form \eqref{eq:asymptotic_expansion_schwartz} with asymptotic terms of 
{\em all} orders starting with $a_{d+1}(\theta)/r^{d+1}$ appearing and do {\em not} vanishing generically in time 
(see \cite[Theorem 1.2, Corollary 1.1]{McOwenTopalov4}). Similar results can be obtained for the Navier-Stokes equation 
(see the related work section below).
In this way it is natural to ask what will happen if we start with initial data $u_0$ that is not necessarily short-range but has spatial asymptotic
expansion of the form \eqref{eq:asymptotic_expansion_schwartz}. 
(We refer to Example 1 in \cite[Appendix]{McOwenTopalov3} for the necessity of including log terms in the asymptotic expansions.)
As mentioned above, we are also interested in how the asymptotics and the remainder evolve in time or how they both depend on time 
and the initial data. 

\medskip

We shall give the definition of $\A^{m,p}_{N;0}$ below, but for now let us state the results that we prove in this paper.
We let $B^{m,p}_{N;0}(\rho)$ denote the open ball in $\A^{m,p}_{N;0}$ of radius $\rho>0$ and let $\accentset{\,\,\,\circ}{\A}^{m,p}_{N;0}$ 
denote the divergence free vector fields in $\A^{m,p}_{N;0}$. For the notion of solution we refer to Remark \ref{rem:definition_solution} in 
Section \ref{sec:navier-stokes}.

\begin{Th}\label{th:NS}
Assume that $m>2+\frac{d}{p}$, $N\in\Z_{\ge 0}$, and $1<p<\infty$. For any $\rho>0$ there exists $T\equiv T(\rho)>0$ so that
for any divergence free vector field $u_0\in B^{m,p}_{N;0}(\rho)$ there exists a unique solution 
$u\in C\big([0,T],\accentset{\,\,\,\circ}{\A}^{m,p}_{N;0}\big)\cap C^1\big([0,T],\accentset{\,\,\,\circ}{\A}^{m-2,p}_{N;0}\big)$ of 
the Navier-Stokes equation \eqref{eq:NS} that depends Lipschitz continuously on the initial data in the sense that the data-to-solution map,
\[
u_0\mapsto u,\quad B^{m,p}_{N;0}(\rho)\cap\accentset{\,\,\,\circ}{\A}^{m,p}_{N;0} \to 
C\big([0,T],\accentset{\,\,\,\circ}{\A}^{m,p}_{N;0}\big)\cap C^1\big([0,T],\accentset{\,\,\,\circ}{\A}^{m-2,p}_{N;0}\big),
\]
is Lipschitz continuous.
\end{Th}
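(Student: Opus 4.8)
The plan is to recast \eqref{eq:NS} as a semilinear evolution equation by eliminating the pressure, and then to solve the resulting fixed-point problem by the contraction mapping principle in $C\big([0,T],\accentset{\,\,\,\circ}{\A}^{m,p}_{N;0}\big)$. Applying the Leray projection $P=\Id-\nabla(-\Delta)^{-1}\Div$ onto divergence free fields to \eqref{eq:NS} removes $\nabla\p$ and leaves
\[
u_t=\nu\Delta u-P(u\cdot\nabla u),\qquad u|_{t=0}=u_0,
\]
with the pressure recovered a posteriori from $\p=(-\Delta)^{-1}\Div(u\cdot\nabla u)$. The three functional-analytic ingredients I would use are: (i) that $\A^{m,p}_{N;0}$ is a Banach algebra under pointwise multiplication and embeds continuously into $C^2(\R^d)$ for $m>2+\tfrac{d}{p}$; (ii) that $P$ maps $\A^{m,p}_{N;0}$ boundedly onto $\accentset{\,\,\,\circ}{\A}^{m,p}_{N;0}$, respecting the asymptotic structure \eqref{eq:asymptotic_expansion} including the log terms; and (iii) that $\nu\Delta$ generates a strongly continuous analytic semigroup $e^{t\nu\Delta}$ on $\A^{m,p}_{N;0}$ obeying the parabolic smoothing estimate $\big\|e^{t\nu\Delta}f\big\|_{\A^{m,p}_{N;0}}\le C\,t^{-1/2}\big\|f\big\|_{\A^{m-1,p}_{N;0}}$ for $0<t\le T$.

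Next I would pass to the mild (Duhamel) formulation
\[
u(t)=e^{t\nu\Delta}u_0-\int_0^t e^{(t-s)\nu\Delta}\,P\big(u(s)\cdot\nabla u(s)\big)\,ds,
\]
and show that the right-hand side defines a contraction on a closed ball of $C\big([0,T],\accentset{\,\,\,\circ}{\A}^{m,p}_{N;0}\big)$ once $T=T(\rho)$ is small. The bilinear term obeys $\big\|P(u\cdot\nabla v)\big\|_{\A^{m-1,p}_{N;0}}\le C\,\|u\|_{\A^{m,p}_{N;0}}\|v\|_{\A^{m,p}_{N;0}}$ by the algebra property together with (ii), and combining this with the smoothing estimate (iii) controls the Duhamel integral: the loss of one spatial derivative in the nonlinearity is exactly compensated by the integrable singularity $\int_0^t(t-s)^{-1/2}\,ds=2\sqrt{t}$, which also supplies the smallness factor that forces the map to contract for small $T$. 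Fixing $\rho$ and choosing $T(\rho)$ accordingly yields a unique fixed point $u\in C\big([0,T],\accentset{\,\,\,\circ}{\A}^{m,p}_{N;0}\big)$.

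Uniqueness and the Lipschitz dependence on $u_0$ would then follow directly from the contraction estimates: subtracting the Duhamel identities for two data $u_0,\tilde u_0$ and using the bilinear bound gives $\|u-\tilde u\|\le C\|u_0-\tilde u_0\|+\tfrac12\|u-\tilde u\|$ on $[0,T]$, whence the stated Lipschitz bound. To upgrade the temporal regularity I would bootstrap: with $u\in C\big([0,T],\accentset{\,\,\,\circ}{\A}^{m,p}_{N;0}\big)$ in hand, the right-hand side $\nu\Delta u-P(u\cdot\nabla u)$ lies in $C\big([0,T],\accentset{\,\,\,\circ}{\A}^{m-2,p}_{N;0}\big)$ since $\Delta u\in\A^{m-2,p}_{N;0}$ and $P(u\cdot\nabla u)\in\A^{m-1,p}_{N;0}\hookrightarrow\A^{m-2,p}_{N;0}$; differentiating the Duhamel formula then gives $u\in C^1\big([0,T],\accentset{\,\,\,\circ}{\A}^{m-2,p}_{N;0}\big)$ and shows that $u$ is a genuine solution in the sense of Remark \ref{rem:definition_solution}, with the $C^2$-in-$x$ regularity from the embedding in (i) making all terms of \eqref{eq:NS} classical.

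The main obstacle I anticipate is ingredient (ii): since $P=\Id-\nabla(-\Delta)^{-1}\Div$ is a nonlocal operator built from Riesz-type transforms, one must verify that it preserves the whole hierarchy of asymptotic terms in \eqref{eq:asymptotic_expansion} — in particular that it neither destroys the leading non-decaying profile $a_0(\theta)$ nor fails to produce the precise log powers — and that it maps into the divergence free subspace with a uniform bound. A close second is the verification of the smoothing estimate (iii) in the $\A$-norm, because the heat semigroup must be shown to smooth the non-decaying part of the data while transporting the spatial asymptotics consistently; this is presumably where the sectoriality underlying the holomorphic-in-time extension announced in the abstract enters.
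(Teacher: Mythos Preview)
Your strategy---Duhamel plus contraction for a semilinear parabolic reformulation---is exactly the paper's, and you correctly identify the pressure elimination and the heat-semigroup smoothing as the two key ingredients. There are, however, two technical points where your formulation diverges from the paper's in ways that matter.

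First, the paper does \emph{not} use the Leray projection. Instead it rewrites \eqref{eq:NS} as $u_t=\nu\Delta u+F(u)$ with
\[
F(u)=\Delta^{-1}\circ\nabla\circ Q(u)-u\cdot\nabla u,\qquad Q(u)=\tr\big([du]^2\big),
\]
where $\Delta^{-1}$ is the inverse of the isomorphism $\Delta:\widehat\A^{m,p}_{1,N+1;-1}\to\A^{m-2,p}_{3,N+3;-3}$ established in earlier work. This sidesteps precisely the obstacle you flag in (ii): one never has to make sense of $(-\Delta)^{-1}$ on the full space $\A^{m,p}_{N;0}$ (which contains non-decaying functions), only on a space with enough built-in decay. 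Divergence-freeness is not enforced by a projection but is instead shown \emph{a posteriori} (Proposition~\ref{prop:NS-modified}) via a transport argument for $\Div u$.

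Second, your smoothing estimate (iii) is stated with the wrong source space. The estimate the paper proves (Corollary~\ref{coro:S(t)-estimatesA*}) is
\[
\big\|S_\nu(z)\big\|_{\LL(\A^{m-1,p}_{1,N+1;0},\,\A^{m,p}_{N;0})}\le C\,|z|^{-1/2},
\]
i.e.\ the gain of one derivative in $m$ is tied to a simultaneous shift in the asymptotic indices $(n,N)\mapsto(n+1,N+1)$. This is why the paper tracks that $F$ maps $\A^{m,p}_{N;0}\to\A^{m-1,p}_{1,N+1;0}$ (Lemma~\ref{lem:lipschitz}), not merely into $\A^{m-1,p}_{N;0}$. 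Your version $\A^{m-1,p}_{N;0}\to\A^{m,p}_{N;0}$ is not what is proved and likely fails, since by Lemma~\ref{lem:equivalent_A-norms} membership in $\A^{m,p}_{N;0}$ requires $\partial_k u\in\A^{m-1,p}_{1,N+1;-1}$, which the unshifted bound does not deliver.

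Finally, the paper first proves the holomorphic-in-time version (Theorem~\ref{th:NS-complex}) and deduces Theorem~\ref{th:NS} from it; the uniqueness argument for \emph{all} solutions in the stated class (not just the fixed point) uses an identification with the mild formulation via the uniqueness of the heat equation in $\Sz'$ (Lemma~\ref{lem:T-map_real}, Proposition~\ref{prop:heat_equation_in_S'}), which is a step you omit.
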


In fact, in Section \ref{sec:navier-stokes} we prove a stronger result: For a given $T>0$ and an angle $\vartheta\in(0,\pi/2)$ 
consider the conic sectors in $\C$,
\begin{equation}\label{eq:conic_sets}
\bbS_\vartheta:=\big\{z\in\C\,\big|\,z\ne 0, |\arg z|<\vartheta\big\}\quad\text{\rm and}\quad
\bbS_{\vartheta,T}:=\big\{z\in\bbS_{\vartheta}\,\big|\,|z|<T\big\}.
\end{equation}
In Theorem \ref{th:NS-complex} we show that for any $\rho>0$ and $\vartheta\in(0,\pi/2)$ the time of existence 
$T>0$ in Theorem \ref{th:NS} above can be chosen so that the unique solution 
$u\in C\big([0,T],\accentset{\,\,\,\circ}{\A}^{m,p}_{N;0}\big)\cap C^1\big([0,T],\accentset{\,\,\,\circ}{\A}^{m-2,p}_{N;0}\big)$ 
of the Navier-Stokes equation extends to a bounded holomorpic map\footnote{$X_\C$ denotes the complexification of a given real Banach space $X$.}
\[
u : \bbS_{\vartheta,T}\to\accentset{\,\,\,\circ}{\A}^{m,p}_{N;0,\C}.
\]
The holomorphic map $u\in C_b\big(\bbS_{\vartheta,T},\accentset{\,\,\,\circ}{\A}^{m,p}_{N;0,\C}\big)$ depends Lipschitz continuously on 
the initial data $u_0\in B^{m,p}_{N;0}(\rho)\cap\accentset{\,\,\,\circ}{\A}^{m,p}_{N;0}$.
(For a given metric space $D$ and a Banach space $Y$, $C_b(D,Y)$ denotes the Banach space of bounded continuous maps $D\to Y$ 
equipped with the $\sup$-norm.)
In particular, for any $u_0\in\accentset{\,\,\,\circ}{\A}^{m,p}_{N;0}$ there exists a maximal time of existence $T_\infty>0$ and
a unique solution
$C\big([0,T_\infty),\accentset{\,\,\,\circ}{\A}^{m,p}_{N;0}\big)\cap C^1\big([0,T_\infty),\accentset{\,\,\,\circ}{\A}^{m-2,p}_{N;0}\big)$ 
of the Navier-Stokes equation \eqref{eq:NS} that is real-analytic on $(0,T_\infty)$ (see Corollary \ref{coro:NS-global}).
Moreover, by Cauchy's estimate (see e.g. \cite[Lemma A.2]{PK}), for any given $l\ge 1$, 
$u\in C^l\big((0,T_\infty),\accentset{\,\,\,\circ}{\A}^{m,p}_{N;0}\big)$
depends Lipschitz continuously on the initial data $u_0\in\accentset{\,\,\,\circ}\A^{m,p}_{N;0}$ in the sense that for any compact interval 
$K\subseteq(0,T_\infty)$, $u\in C^l\big(K,\accentset{\,\,\,\circ}{\A}^{m,p}_{N;0}\big)$ depends Lipschitz continuously 
on $\accentset{\,\,\,\circ}{\A}^{m,p}_{N;0}$. In fact, the solution $u$ depends analytically on the initial data
$u_0\in\accentset{\,\,\,\circ}{\A}^{m,p}_{N;0}$ -- see Remark \ref{rem:analytic_dependence} for the details.

The proofs are based on the properties of the heat flow established in Section \ref{sec:A-spaces}, and are independent of 
the proof in the case of the Euler equation. We first prove that the Laplace operator, when considered as an unbounded operator in the asymptotic space 
$\A^{m,p}_{N;0}$, is sectorial with domain ${\widetilde\A}^{m+2,p}_{N;0}$ (see \eqref{def:D(Lap|A)} and Theorem \ref{th:sectorial_A}). 
As a consequence, we obtain that the heat flow generates an analytic semigroup on the asymptotic space $\A^{m,p}_{N;0}$ with 
angle $\vartheta=\pi/2$ (Theorem \ref{th:SonA}). This allows us to prove Corollary \ref{coro:S(t)-estimatesA} and Corollary \ref{coro:S(t)-estimatesA*} 
that contain important estimates on the heat flow used in the prove of Theorem \ref{th:NS-complex}. 
It is worth pointing out that the domain ${\widetilde\A}^{m+2,p}_{N;0}$ of the Laplace operator in $\A^{m,p}_{N;0}$ is characterized explicitly in 
terms of the scale of asymptotic spaces (see \eqref{def:D(Lap|A)}). The results in Section \ref{sec:A-spaces} 
are based on analogous results on weighted Sobolev spaces proved in Section \ref{sec:W-spaces} (see Theorem \ref{th:sectorial_W},
Theorem \ref{th:SonW}, Corollary \ref{coro:S(t)-estimatesW} and Corollary \ref{coro:S(t)-estimatesW*}).

Let us now define the asymptotic spaces. In order to control the remainder in \eqref{eq:asymptotic_expansion} we will assume
that it belongs to a weighted Sobolev space $W^{m,p}_{\gamma_N}$ whose elements are real-valued functions on $\R^d$ with asymptotic
$o(1/r^N)$ as $r\to\infty$. Let us define the space $W^{m,p}_{\gamma_N}$.
We fix $p\in (1,\infty)$, let $C_c^\infty(\R^d)$ denote the space of smooth functions with compact support in $\R^d$, let $\Z_{\ge 0}$ denote 
the nonnegative integers, and set $\x:=\sqrt{1+|x|^2}$. 
For $m\in\Z_{\ge 0}$ and $\delta\in\R$, define the weighted Sobolev space $W_\delta^{m,p}\equiv W_\delta^{m,p}(\R^d)$ to be 
the closure of $C_c^\infty(\R^d)$ in the norm 
\begin{equation}\label{W-norm}
\|f\|_{W_\delta^{m,p}}=\sum_{|\alpha|\leq m}\|\langle x\rangle^{\delta+|\alpha |}\partial^\alpha f \|_{L^p}
\end{equation}
where $\alpha\in\Z_{\ge 0}^d$ is a multi-index, $\partial^\alpha\equiv\partial_1^{\alpha_1}\cdots\partial_d^{\alpha_d}$, and
$\partial_k\equiv\partial/\partial x_k$ is the (weak) partial derivative in the direction of $x_k$.
We also denote $W^{0,p}_\delta$ by $L^p_\delta$. 
It was shown in \cite{Bartnik} and \cite[Lemma 2.2]{McOwenTopalov2} that $f\in W^{m,p}_\delta(\R^d)$ for $m>d/p$
implies that $f\in C^k(\R^d)$ for $0\leq k<m-d/p$ with
\begin{subequations}
\begin{equation}\label{eq:W-estimate}
\sup_{x\in\R^d}\x^{\delta+\frac{d}{p}+|\alpha|} |\partial^\alpha f(x)| \leq C\,\|f\|_{W^{m,p}_\delta}
\,\,\, \hbox{for}\ |\alpha|<m-d/p
\end{equation}
and in fact 
\begin{equation}\label{eq:W-estimate*}
|x|^{\delta+\frac{d}{p}+|\alpha|}|\partial^\alpha f(x)|\to 0 \ \text{\rm as}\ |x|\to\infty
\,\,\, \hbox{for}\ |\alpha|<m-d/p.
\end{equation}
\end{subequations}
Next, we introduce
\begin{equation}\label{def:gamma_N}
\gamma_N:=N+\gamma_0\ \hbox{where $\gamma_0$ is fixed and chosen so that}\ 0\le\gamma_0+\frac{d}{p}<1\, .
\end{equation}
Let us now proceed to the definition of the asymptotic space.
Choose a cut-off function $\chi\in C^\infty_c(\R)$ such that $\chi(r)=0$ for $0\le r\le 1$ and $\chi(r)=1$ for $r\ge 2$. 
For integer $0\le n\le N$ and an integer $\ell$ satisfying $\ell\geq -n$ define 
$\A^{m,p}_{n,N;\ell}\equiv\A^{m,p}_{n,N;\ell}(\R^d)$ to be the space of real-valued functions of the form 
\begin{equation}\label{AWlog-expansion1} 
v(x)=a(x)+f(x),
\end{equation}
where
\begin{equation}\label{AWlog-expansion2} 
a(x)=\chi(r)\left(\frac{a^0_n+\cdots a_n^{n+\ell}(\log r)^{n+\ell}}{r^n}+\cdots + 
\frac{a^0_{N}+\cdots+a_N^{N+\ell}(\log r)^{N+\ell}}{r^{N}}\right)
\end{equation}
with $a^j_k\in H^{m+1+N-k,p}(\s^{d-1})$ is the {\em asymptotic part of $v$} and 
\begin{equation}\label{AW-expansion3} 
f\in W^{m,p}_{\gamma_N} 
\end{equation}
is the {\em remainder of $v$} (cf. \cite[Appendix B]{McOwenTopalov2}, \cite[Appendix C]{McOwenTopalov4} for details).
Note that by \eqref{eq:W-estimate*}, if $f\in W^{m,p}_{\gamma_N}$ with $m>d/p$ then $f=o(1/r^N)$ as $r\to\infty$.
The function space $\A^{m,p}_{n,N;\ell}$ is a Banach space under the norm
\begin{equation}\label{def:AW;-norm}
\|v\|_{{\mathcal A}_{n,N;\ell}^{m,p}}=\sum_{k=n}^N \sum_{j=0}^{k+\ell} \|a^j_k\|_{H^{m+1+N-k,p}}+\|f\|_{W_{\gamma_N}^{m,p}}.
\end{equation}
For $n=0$ we write $\A^{m,p}_{N;\ell}$ instead of $\A^{m,p}_{0,N;\ell}$. We are particularly interested in the case $\ell=0$, which was used 
in our results on Euler's equation (see \cite{McOwenTopalov3}).
We will also need the {\em complex} versions of the spaces $W^{m,p}_\delta$ and $\A^{m,p}_{n,N;\ell}$ which can be defined  in 
a straightforward manner by allowing the functions in the definitions above to take complex values. The norms \eqref{W-norm} and 
\eqref{def:AW;-norm} are then readily extendable to the complex case. Finally, note that for simplicity of notation we will use (whenever possible)
the symbols $\A^{m,p}_{n,N;\ell}$, $W^{m,p}_\delta$, etc., independently of whether we are considering single-valued functions, 
vector fields, or tensor fields whose elements belong to the considered space of functions.
Note that in contrast to \cite{McOwenTopalov2}, in the present paper we will allow $m\ge 0$ in the definition of the remainder \eqref{AW-expansion3}.
(This will not cause problems since $a\notin W^{m,p}_{\gamma_N}$, and hence, the decomposition \eqref{AWlog-expansion1} of 
$v$ into an asymptotic and a reminder part is unique.)

\medskip

Let us now draw conclusions on the dynamics of the solutions of the Navier-Stokes equation.
Choose $m>2+\frac{d}{p}$, $N\ge\Z_{\ge 0}$, and $1<p<\infty$, and take a divergence free initial data 
$u_0\in\accentset{\,\,\,\circ}{\A}^{m,p}_{N;0}$ with weight $\gamma_N$ chosen as in \eqref{def:gamma_N} 
with $\gamma_0+\frac{d}{p}>0$. In particular, we can choose $u_0$ of the form \eqref{eq:asymptotic_expansion_schwartz} or, 
more generally, of the form \eqref{eq:asymptotic_expansion}.
Then, by Theorem \ref{th:NS}, there exist $T>0$ and a unique solution 
$u\in C\big([0,T],\accentset{\,\,\,\circ}{\A}^{m,p}_{N;0}\big)\cap C^1\big([0,T],\accentset{\,\,\,\circ}{\A}^{m-2,p}_{N;0}\big)$ 
of the Navier-Stokes equation. In view of \eqref{eq:W-estimate}, we obtain estimates on the remainder in the asymptotic expansion of $u$:
there exists a constant $C\equiv C_T>0$ such that 
for any $t\in[0,T]$ and for any $|x|\ge 2$ we have
\begin{equation}\label{eq:approximation}
\left|\partial^\alpha\Big(u(x,t)-\sum_{0\le k\le N}\frac{a^0_k(\theta,t)+\cdots+a_k^k(\theta,t)(\log r)^k}{r^k}\Big)\right|\le
\frac{C}{|x|^{N+\varepsilon+|\alpha|}},\quad 0\le |\alpha|<m-\frac{d}{p},
\end{equation}
where $\varepsilon\equiv\gamma_0+\frac{d}{p}>0$.
Since the solution depends continuously on the initial data, the constant $C>0$ can be chosen so that \eqref{eq:approximation} holds 
locally uniformly on the initial data $u_0\in\accentset{\,\,\,\circ}{\A}^{m,p}_{N;0}$.
Therefore, the solution $u(x,t)$ of the Navier-Stokes equation is approximated uniformly on $\R^d$, together with its spatial derivatives 
of order less than $m-\frac{d}{p}$, by its asymptotic part
\begin{equation}\label{eq:asymptotic_part}
a_0(\theta)+\sum_{1\le k\le N}\frac{a^0_k(\theta,t)+\cdots+a_k^k(\theta,t)(\log r)^k}{r^k}
\end{equation}
with error of order $O\big(1/|x|^{N+\varepsilon+|\alpha|}\big)$ uniformly in $t\in[0,T]$ and locally uniformly on the initial data.
This implies that {\em the behavior of the solution $u(x,t)$ (and its derivatives) in the complement of a centered at zero closed ball in $\R^d$ 
is determined by the ansatz \eqref{eq:asymptotic_part} up to an error of a higher order of magnitude}. 
Note that the leading asymptotic term $a_0(\theta)$ of the solution $u(x,t)$ in \eqref{eq:asymptotic_part} is 
independent of time; this follows easily by comparing the leading asymptotic in the Navier-Stokes equation.
Finally, note that one can apply Theorem \ref{th:NS} to initial data $u_0$ in the weighted Sobolev space 
$W^{m,p}_\delta$ with $\delta+\frac{d}{p}\ge 0$. This implies that for any $0<t\le T$ the solution $u(x,t)$ has an asymptoic
expansion of order $N=[\delta+\frac{d}{p}]$ as $|x|\to\infty$ where $[\cdot]$ denotes the integer part of a real number. 
This, and the arguments in \cite{McOwenTopalov4}, can then be used to show that the asymptotic expansion of $u(x,t)$ does not involve 
log terms and that Theorem 1.2 (and Theorem 1.1) in \cite{McOwenTopalov4} continues to hold for the Navier-Stokes equation.
As in \cite{McOwenTopalov4}, one can show that nontrivial asymptotic terms of {\em all} orders starting with $a_{d+1}(\theta)/r^{d+1}$ do appear; this is true for generic initial data $u_0\in W^{m,p}_\delta$.

\medskip
 
\noindent{\em Related work:} There are many important works related to the solutions of the Navier-Stokes equation on 
$\R^d$ in various function spaces; we refer the reader to the monographs \cite{Kato2,MB} and the references therein.
Let us now discuss works concerned with the spatial behavior of solutions at infinity. It was noted in \cite{DobrShaf} that the solutions of 
the Euler and the Navier-Stokes equations in $\R^3$ with initial data in the Schwartz space $\Sz$ have a decay rate of order $O(1/r^4)$
in an arbitrarily small time. The existence of a full asymptotic expansion in the case of Euler equation for $d=2$ with initial data in the Schwartz
space (and, more generally, in weighted Sobolev spaces) is established in \cite[Corollary 1.2 and 1.3]{SultanTopalov}, and 
for initial vorticity with compact support in \cite[Theorem 1.2]{Bran1}.
The general case of $\R^d$ with $d\ge 2$ is considered in \cite[Theorem 1.2, Corollary 1.1]{McOwenTopalov4}
(cf. also \cite[Theorem 1.1]{McOwenTopalov3}). 
The existence of an asymptotic expansion for the solutions of the Navier-Stokes equation for $d=2$ with initial vorticity in a weighted $L^1$-space was
proven in \cite[Theorem 1.1]{Bran1}. 
The case of $\R^d$ with $d\ge 2$ and initial data in the Schwartz space is considered in \cite[Theorem 2.1]{KR}, where the asymptotic expansion
is written in terms of the inverse Fourier transform of given functions. To our best knowledge, there are no prior works concerned with 
the existence of (full) spatial asymptotic expansions of the solutions of the Navier-Stokes equation with non-decaying initial data. 
The dependence of solutions on the initial data has also not been previously discussed. The asymptotic spaces $\A^{m,p}_{N;\ell}$ are 
introduced in \cite{McOwenTopalov1,McOwenTopalov2} as a tool for studying spatial asymptotic expansions of the solutions of PDEs. 
The local well-posedness of the Euler equation in the asymptotic space $\A^{m,p}_{N;0}$ is proven in \cite{McOwenTopalov3,McOwenTopalov4}. 
Applications to the heat flow are obtained in \cite{McOwenTopalov5}; however, there we use slightly different remainder spaces. 
We also mention the work \cite{KS} where the behavior of stationary solutions of the Navier-Stokes equation is studied.
Finally, note that asymptotic type spaces with a single asymptotic term of order $O(1/r)$ were introduced and studied in dimension two:
we refer to the affine space $E_m$ in \cite[Definition 1.3.3]{Chemin}, as well as to the radial-energy decomposition in \cite[Definition 3.1]{MB}.

\section{The heat semigroup on weighted Sobolev spaces}\label{sec:W-spaces}
Recall that the heat equation on $(0,\infty)\times\R^d$,
\begin{equation}\label{eq:heat}
\left\{
\begin{array}{l}
u_t=\nu\Delta u,\\
u|_{t=0}=u_0,
\end{array}
\right.
\end{equation}
where $u_0$ belongs to the Banach space $C_b$ of uniformly bounded continuous vector fields on $\R^d$, 
is solved by $u(t)=S_\nu(t)u_0$, $t\ge 0$, where
\begin{equation}\label{eq:Gaussian}
\big(S_\nu(t)u_0\big)(x):=\frac{1}{(4\pi\nu t)^{d/2}} \int_{\R^d} e^{-\frac{|x-y|^2}{4\nu t}} u_0(y)\,dy,\quad x\in\R^d,
\end{equation}
and the solution satisfies $u\in C^\infty((0,\infty)\times\R^d)\cap C([0,\infty)\times\R^d)$ -- see e.g. \cite[Ch. 7]{Shubin}. 
More generally, \eqref{eq:Gaussian} extends to a semigroup  $\{S_\nu(t)\}_{t\ge 0}$ of bounded linear maps on 
the space of tempered distributions $\Sz'\equiv\Sz(\R^d)$ -- see Lemma \ref{lem:heat_flow_in_S'} and 
Proposition \ref{prop:heat_equation_in_S'} in Appendix \ref{sec:aux-results}. 
We want to consider $S_\nu(t)$ as a semigroup on the weighted Sobolev spaces $W_\delta^{m,p}$. 
Since $W_\delta^{m,p}$ is a subspace of $\Sz'$, we know that for any given $t\ge 0$ the map $S_\nu(t)$ is 
defined on $W_\delta^{m,p}$ but it is not immediately clear that $W_\delta^{m,p}$ is invariant under 
$S_\nu(t)$ or how its operator norm depends on $t\ge 0$. These are addressed in Theorem \ref{th:SonW} and
Corollary \ref{coro:S(t)-estimatesW} below. We set $S(t):=S_\nu(t)\big|_{\nu=1}$.
For simplicity of notation we will denote in this section the real spaces and their complexifications by the same symbols.

\medskip

For integer $m\ge 0$ and $\delta\in\R$ consider the Banach space
\begin{equation}\label{def:D(Lap|W)}
{\widetilde W}^{m+2,p}_\delta:=\big\{f\in \Sz'\,\big|\,\partial^\alpha f\in W^{m,p}_\delta, |\alpha|\leq 2\big\}
\end{equation}
equipped with the norm $\|f\|_{{\widetilde W}^{m+2,p}_\delta}:=\sum_{|\alpha|\le 2}\|\partial^\alpha f\|_{W^{m,p}_{\delta}}$.
Note that ${\widetilde W}^{m+2,p}_\delta$ is a dense subspace in $W^{m,p}_\delta$ and by \eqref{def:D(Lap|W)} we have
\begin{equation}\label{eq:inclusions_W}
W^{m+2,p}_\delta\subseteq{\widetilde W}^{m+2,p}_\delta\subseteq W^{m,p}_\delta
\end{equation}
with bounded inclusion maps. Moreover, by Lemma \ref{lem:domains} in Appendix \ref{sec:aux-results}, we have that 
${\widetilde W}^{m+2,p}_\delta=W^{m,p}_{\delta}\cap W^{m+1,p}_{\delta-1}\cap W^{m+2,p}_{\delta-2}$.
For any $\omega>0$ and any $0<\epsilon<\pi$ consider the cone 
\[
\Sigma_{\omega,\epsilon}:=\big\{\lambda\in\C\,\big|\,\lambda\ne\omega, 
|\!\arg(\lambda-\omega)|<\pi-\epsilon\big\}.
\]
For Banach space $X$ and $Y$ denote by $\LL(X,Y)$ the Banach space of bounded linear maps $X\to Y$.
In the case when $Y\equiv X$ we set $\LL(X)\equiv\LL(X,X)$.
One has the following theorem.

\begin{Th}\label{th:sectorial_W}
Assume that $1<p<\infty$, $\delta\in\R$ and that $m\ge 0$ is an integer. Then, we have
\begin{itemize}
\item[(i)] For any $\lambda\in\C\setminus(-\infty,0]$ the map 
\begin{equation*}
(\lambda-\Delta) : {\widetilde W}^{m+2,p}_\delta\to W^{m,p}_{\delta}
\end{equation*}
is an isomorphism of Banach spaces.
In particular, the (distributional) Laplacian $\Delta$, when considered as an unbounded operator $\Delta|_{W^{m,p}_{\delta}}$ in 
$W^{m,p}_{\delta}$ with domain $D(\Delta|_{W^{m,p}_{\delta}})={\widetilde W}^{m+2,p}_\delta$, is closed and 
its spectrum is contained in $(-\infty,0]$. 

\item[(ii)] For any $\omega>0$ and for any $0<\epsilon<\pi$ there exists a 
positive constant $C\equiv C_{\omega,\epsilon}>0$ such that
\[
\big\|\partial^\alpha(\lambda-\Delta)^{-1}\big\|_{\LL(W^{m,p}_{\delta})}\le C/|\lambda-\omega|^{1-\frac{|\alpha|}{2}}
\]
for any $\lambda\in\Sigma_{\omega,\epsilon}$ and any multi-index $\alpha\in\Z_{\ge 0}^d$ with $|\alpha|\le 2$.
\end{itemize}
\end{Th}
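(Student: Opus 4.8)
The plan is to reduce everything to the case $m=0$ and then to absorb the weight by conjugation, so that the problem becomes a lower-order perturbation of the constant-coefficient operator $\lambda-\Delta$ on the \emph{unweighted} space $L^p$. First I would dispose of the reduction in $m$. Since $\Delta$ has constant coefficients it commutes with every $\partial^\beta$, and differentiation maps $W^{m,p}_\delta$ into $L^p_{\delta+|\beta|}$ for $|\beta|\le m$. Hence for $|\alpha|\le 2$ one has $\partial^{\beta}\partial^\alpha(\lambda-\Delta)^{-1}=\partial^\alpha(\lambda-\Delta)^{-1}\partial^\beta$, and summing the definition \eqref{W-norm} of the $W^{m,p}_\delta$-norm over $|\beta|\le m$ shows that the estimate in (ii) at level $m$ and weight $\delta$ follows from the estimate in (ii) at level $0$ for all the shifted weights $\delta+|\beta|$, $|\beta|\le m$; the isomorphism in (i) reduces the same way using ${\widetilde W}^{m+2,p}_\delta=W^{m,p}_\delta\cap W^{m+1,p}_{\delta-1}\cap W^{m+2,p}_{\delta-2}$ (Lemma \ref{lem:domains}). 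Injectivity, and with it the spectral claim, is then immediate for every level and weight: if $u\in{\widetilde W}^{2,p}_\delta\subset\Sz'$ solves $(\lambda-\Delta)u=0$ then $(\lambda+|\xi|^2)\widehat u=0$ in $\Sz'$, and since $\lambda\notin(-\infty,0]$ the symbol $\lambda+|\xi|^2$ is nonvanishing and smooth with bounded reciprocal, forcing $\widehat u=0$, i.e. $u=0$. Thus it remains to prove surjectivity and the quantitative bounds with $m=0$, on $L^p_\delta$, for every $\delta\in\R$.

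For these I would conjugate by the weight. Writing $M_\delta$ for multiplication by $\x^{\delta}$, the map $M_\delta:L^p_\delta\to L^p$ is an isometric isomorphism carrying ${\widetilde W}^{2,p}_\delta$ onto $W^{2,p}$, and a direct computation gives $M_\delta(\lambda-\Delta)M_{-\delta}=(\lambda-\Delta)+B$, where $B=-2\,\vec b\cdot\nabla-c$ with $\vec b=\x^{\delta}\nabla\x^{-\delta}=-\delta\,\x^{-2}x$ and $c=\x^{\delta}\Delta\x^{-\delta}$. Thus $B$ is a first-order operator whose coefficients are bounded and decay like $\x^{-1}$ and $\x^{-2}$. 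Since $-\Delta$ is the classical generator of the heat semigroup on $L^p(\R^d)$, the Mikhlin multiplier theorem applied to the symbols $(i\xi)^\alpha/(\lambda+|\xi|^2)$ yields, for every $\epsilon\in(0,\pi)$, the unweighted resolvent estimates $\|\partial^\alpha(\lambda-\Delta)^{-1}\|_{\LL(L^p)}\le C_\epsilon/|\lambda|^{1-|\alpha|/2}$ for $|\alpha|\le 2$ and $|\!\arg\lambda|\le\pi-\epsilon$.

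Now I would treat large and bounded $\lambda$ separately. Because $B$ is first order with bounded coefficients, $\|B(\lambda-\Delta)^{-1}\|_{\LL(L^p)}\le C(|\lambda|^{-1/2}+|\lambda|^{-1})$; hence there is $R_0>0$ (depending on $\delta,\epsilon$) with $\|B(\lambda-\Delta)^{-1}\|\le\tfrac12$ once $|\lambda|\ge R_0$, and the Neumann series gives $\big((\lambda-\Delta)+B\big)^{-1}=(\lambda-\Delta)^{-1}\big(I+B(\lambda-\Delta)^{-1}\big)^{-1}$ together with $\|\partial^\alpha((\lambda-\Delta)+B)^{-1}\|_{\LL(L^p)}\le 2C_\epsilon/|\lambda|^{1-|\alpha|/2}$. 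Undoing the conjugation (and absorbing the bounded factors $\x^{\delta}\partial^{\alpha-\beta}\x^{-\delta}$ produced when $\partial^\alpha$ meets $M_{-\delta}$, each of which only improves the power of $|\lambda|$ since $|\beta|\le|\alpha|$) gives (ii) on $L^p_\delta$ for all $|\lambda|\ge R_0$; choosing the vertex $\omega\ge R_0$ explains the appearance of $|\lambda-\omega|$ in the statement. For the bounded part of $\Sigma_{\omega,\epsilon}$ I first upgrade injectivity to an isomorphism at each fixed $\lambda\notin(-\infty,0]$: the resolvent is convolution with the kernel $G_\lambda$ of $(\lambda-\Delta)^{-1}$, which decays exponentially, so splitting $G_\lambda$ and its derivatives up to order two into a smooth, exponentially small far part — bounded on $L^p_\delta$ for every $\delta$ by Young's inequality together with Peetre's inequality $\x^{\delta}\le C\,\langle x-y\rangle^{|\delta|}\,\langle y\rangle^{\delta}$ — and a compactly supported near part — locally integrable for $|\alpha|\le 1$, a Calder\'on--Zygmund kernel for $|\alpha|=2$, bounded on $L^p$ by the multiplier theorem and transferred to $L^p_\delta$ by the commutator-with-weight trick, since $\x^{\delta}$ is essentially constant on the support — shows that $G_\lambda*\,:L^p_\delta\to{\widetilde W}^{2,p}_\delta$ is bounded and inverts $\lambda-\Delta$. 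With the injectivity above this proves (i), and the existence of a bounded two-sided inverse (e.g. at $\lambda=1$) shows that $\Delta|_{W^{m,p}_\delta}$ is closed. Finally, on the compact set $\{\lambda\in\Sigma_{\omega,\epsilon}:|\lambda|\le R_0\}$ the resolvent depends continuously (indeed holomorphically) on $\lambda$ and is uniformly bounded, while $1/|\lambda-\omega|^{1-|\alpha|/2}$ stays bounded below away from $\lambda=\omega$ and blows up as $\lambda\to\omega$ (where the left-hand side remains bounded); hence (ii) holds there as well after enlarging $C$.

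The main obstacle is the weighted bound for the \emph{second} derivatives ($|\alpha|=2$) at a general weight $\delta$: the operator in question is a genuine singular integral, and $\x^{\delta p}$ fails to be a Muckenhoupt $A_p$ weight once $|\delta|$ is large, so one cannot simply invoke weighted Calder\'on--Zygmund theory. The conjugation device is exactly what circumvents this, replacing the bad weight by a lower-order operator $B$ that is $\Delta$-bounded with relative bound zero. Keeping the constants \emph{uniform} across the whole sector, rather than merely for each fixed $\lambda$, is the remaining technical point, and this is why the argument is split into the quantitative large-$\lambda$ Neumann estimate and the soft continuity/compactness argument for bounded $\lambda$.
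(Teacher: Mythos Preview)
Your argument is correct and takes a genuinely different route from the paper. The paper works directly with the convolution kernel $K_\lambda$ of $(\lambda-\Delta)^{-1}$: it writes $K_\lambda$ via the Hankel function $H^{(1)}_\mu$, splits $K_\lambda=K_\lambda^{(0)}+K_\lambda^{(1)}$ into a compactly supported Newtonian piece and an exponentially decaying tail, and then obtains the \emph{uniform} bounds $\|\partial^\alpha\K_\lambda\|_{\LL(L^p_\delta)}\le C/|\lambda|^{1-|\alpha|/2}$ on all of $R_{\beta,\epsilon}$ by applying the Schur test (with Peetre's inequality inserted into the kernel) to the tail and a scaling/Calder\'on--Zygmund argument to the near part; the reduction to $m=0$ is the same as yours. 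Your route instead absorbs the weight by conjugation, $M_\delta(\lambda-\Delta)M_{-\delta}=(\lambda-\Delta)+B$, invokes the Mikhlin multiplier theorem for the unweighted constant-coefficient resolvent, and treats $B$ as a relatively bounded perturbation: a Neumann series handles $|\lambda|\ge R_0$ quantitatively, while a soft continuity/compactness argument on the closure of $\{\lambda\in\Sigma_{\omega,\epsilon}:|\lambda|\le R_0\}\subset\C\setminus(-\infty,0]$ covers the rest. Your remark that one ``chooses $\omega\ge R_0$'' is unnecessary: since $|\lambda|\ge(\sin\epsilon)\,|\lambda-\omega|$ throughout $\Sigma_{\omega,\epsilon}$, the bound $C/|\lambda|^{1-|\alpha|/2}$ converts to $C'/|\lambda-\omega|^{1-|\alpha|/2}$ for \emph{every} $\omega>0$, exactly as the paper notes at the end of its proof. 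The paper's explicit kernel analysis yields the sectorial constants in one stroke and feeds directly into the asymptotic-space version (Theorem~\ref{th:sectorial_A}), where the resolvent must be applied to individual asymptotic terms; your perturbative approach is cleaner and avoids special functions, but would require a separate mechanism to track how $(\lambda-\Delta)^{-1}$ acts on $\chi(r)a(\theta)(\log r)^\ell/r^k$ when one passes to the $\A$-spaces.
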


\begin{Rem}\label{rem:spectrum_W}
It follows from \cite{McOwen1} that $\lambda=0$ belongs to the spectrum of $\Delta|_{W^{m,p}_{\delta}}$. More specifically, since 
the domain ${\widetilde W}^{m+2,p}_\delta$ of $\Delta|_{W^{m,p}_{\delta}}$ is a dense proper subspace in $W^{m+2,p}_{\delta-2}$,
\cite[Theorem 0]{McOwen1} implies that zero belongs to the continuous spectrum if $0<\delta+\frac{d}{p}-2<d-2$, it is an eigenvalue for 
$\delta+\frac{d}{p}-2<0$, and belongs to the residual spectrum for $\delta+\frac{d}{p}-2>d-2$.
\end{Rem}


The proof of Theorem \ref{th:sectorial_W} is based on Proposition \ref{prop:resovent_estimates_W} formulated and proved below.
For any given $\beta>0$ and $0<\epsilon<\pi$ consider the set of complex numbers
\[
R_{\beta,\epsilon}:=\big\{\lambda\in\C\,\big|\,|\lambda|\ge\beta,\,|\arg\lambda|\le\pi-\epsilon\big\}.
\]
For any $\lambda\in\C\setminus(-\infty,0]$ denote by $\K_\lambda$ the convolution operator
\begin{equation}\label{eq:KK_lambda}
\K_\lambda : C_c^\infty\to C^\infty,\quad\varphi\mapsto K_\lambda*\varphi,
\end{equation}
where $K_\lambda$ is the fundamental solution of the operator $(\lambda-\Delta) : \Sz'\to \Sz'$.
Denote by $d\sigma$ the volume form of the unit sphere $\s^{d-1}$ in the Euclidean space $\R^d$ and let $\Omega_{d-1}$
be the volume of $\s^{d-1}$.

\begin{Prop}\label{prop:resovent_estimates_W}
Assume that $1<p<\infty$, $\delta\in\R$ and that $m\ge 0$ is an integer. Then for any given $\beta>0$, $0<\epsilon<\pi$, and 
a multi-index $\alpha\in\Z^d_{\ge 0}$, $|\alpha|\le 2$, there exists a constant $C\equiv C_{\beta,\epsilon}>0$ (independent of $m\ge 0$) 
such that
\begin{equation}\label{eq:resolvent_W-spaces}
\big\|\partial^\alpha\big(\K_\lambda\varphi\big)\big\|_{W^{m,p}_\delta}\le
\frac{C}{|\lambda|^{1-\frac{|\alpha|}{2}}}\big\|\varphi\big\|_{W^{m,p}_\delta}
\end{equation}
for any $\varphi\in C_c^\infty$ and $\lambda\in R_{\beta,\epsilon}$.
\end{Prop}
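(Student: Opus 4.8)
The plan is to extract the decay in $\lambda$ from the explicit fundamental solution and to collapse the spatial weights to a single fixed weight by means of the scaling vector fields of the problem. Since $\widehat{K_\lambda}(\xi)=(\lambda+|\xi|^2)^{-1}$ we have the exact scaling $K_\lambda(x)=\nu^{\,d-2}K_{\tilde\lambda}(\nu x)$ with $\nu:=\sqrt{|\lambda|}$ and $\tilde\lambda:=\lambda/|\lambda|$, where $K_{\tilde\lambda}$ is a Bessel--Macdonald (Yukawa) potential obeying $|\partial^\alpha K_{\tilde\lambda}(z)|\lesssim|z|^{2-d-|\alpha|}$ near the origin and $|\partial^\alpha K_{\tilde\lambda}(z)|\lesssim e^{-c_\epsilon|z|}$ at infinity, with constants uniform for $\tilde\lambda$ on the arc $|\tilde\lambda|=1$, $|\arg\tilde\lambda|\le\pi-\epsilon$ (here $c_\epsilon=\sin(\epsilon/2)>0$). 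To handle the norm \eqref{W-norm} uniformly in $m$, I would pass to the equivalent norm $\sum_{|\gamma|\le m}\|\langle x\rangle^{\delta}Z^{\gamma}f\|_{L^p}$ built from the fields $Z_j:=\langle x\rangle\partial_j$; the point of this reformulation is that the weight exponent $\delta+|\gamma|$ of \eqref{W-norm}, which grows with the differentiation order, is replaced by the single fixed weight $\langle x\rangle^\delta$, the growth being absorbed into the $Z_j$'s (the equivalence constants depend only on $d$, since the extra terms generated by the product rule are of lower order and already present in the sum).

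Next I would commute $Z^\gamma$ through $\partial^\alpha\K_\lambda$. As $\partial^\alpha\K_\lambda$ is a convolution, each $\partial_j$ passes through it for free, so the only genuine commutator is with multiplication by $\langle x\rangle$. Its integral kernel is $\big(\langle x\rangle-\langle y\rangle\big)\,\partial^\alpha K_\lambda(x-y)$, and since $|\langle x\rangle-\langle y\rangle|\le|x-y|$ it is dominated by $|x-y|\,|\partial^\alpha K_\lambda(x-y)|$. By the scaling of $K_\lambda$ this convolution-dominated kernel carries exactly one extra power $\nu^{-1}=|\lambda|^{-1/2}$ relative to $\partial^\alpha K_\lambda$ and is integrable even for $|\alpha|=2$ (the singularity improves to $|z|^{1-d}$). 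Iterating, $Z^\gamma\,\partial^\alpha\K_\lambda$ equals $\partial^\alpha\K_\lambda\,Z^\gamma$ plus finitely many correction operators of the same class acting on $Z^{\gamma'}f$ with $|\gamma'|<|\gamma|$, each correction gaining a factor $|\lambda|^{-1/2}$. In this way the entire estimate is reduced to bounds at the single fixed weight $\langle x\rangle^\delta$, i.e. on $L^p_\delta=W^{0,p}_\delta$.

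It then remains to bound $\partial^\alpha\K_\lambda$ and the correction operators on $L^p_\delta$ with the claimed power of $\lambda$. For $|\alpha|\le 1$ the kernel $\partial^\alpha K_\lambda$ is integrable, so, the weight now being the fixed number $\delta$, Peetre's inequality $\langle x\rangle^{\delta}\le 2^{|\delta|}\langle x-y\rangle^{|\delta|}\langle y\rangle^{\delta}$ together with Young's inequality reduces the bound to $\|\langle\cdot\rangle^{|\delta|}\partial^\alpha K_\lambda\|_{L^1}$; rescaling by $\nu$ shows this equals $\nu^{|\alpha|-2}\int\langle w/\nu\rangle^{|\delta|}|\partial^\alpha K_{\tilde\lambda}(w)|\,dw\le C_{\delta,\beta}\,|\lambda|^{-(1-|\alpha|/2)}$, finiteness of the integral using that $\delta$ is fixed and $\nu\ge\sqrt{\beta}$. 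For $|\alpha|=2$ the kernel is a genuine Calder\'on--Zygmund kernel; here I would use the $L^p$-boundedness of the multiplier $(i\xi)^\alpha/(\tilde\lambda+|\xi|^2)$ (Mikhlin--H\"ormander, with seminorms uniform over the arc) after rescaling to $|\tilde\lambda|=1$, the fixed weight $\langle x\rangle^\delta$ being harmless because $\langle x\rangle\asymp\langle y\rangle$ on the diagonal while the off-diagonal part is integrable with exponential decay. Tracking the $\nu$-powers reproduces the factor $|\lambda|^{-(1-|\alpha|/2)}$ in the $|\alpha|=2$ case as well.

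The step I expect to be the main obstacle is the uniformity in $m$ of the commutator expansion: each commutation with $\langle x\rangle$ gains $|\lambda|^{-1/2}\le\beta^{-1/2}$ but also increases the number of terms, so careless bookkeeping yields a constant growing like $(1+\beta^{-1/2})^m$. Proving that the algebra generated by the fields $\langle x\rangle\partial_j$ together with $\partial^\alpha\K_\lambda$ closes with enough decay for these contributions to resum to an $m$-independent bound is the heart of the matter; this is exactly the gain one cannot obtain by the naive route of loading the whole weight onto the kernel, for which $\|\langle\cdot\rangle^{\,s}\partial^\alpha K_{\tilde\lambda}\|_{L^1}$ grows factorially in $s=\delta+|\gamma|$.
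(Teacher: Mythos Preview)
Your reduction to $L^p_\delta$ via the fields $Z_j=\langle x\rangle\partial_j$ and the ensuing commutator expansion is an unnecessary detour, and the ``main obstacle'' you flag is created by that choice. Since $\K_\lambda$ is a convolution, the \emph{ordinary} derivatives $\partial^\gamma$ commute with it exactly, and the $W^{m,p}_\delta$-norm \eqref{W-norm} is already written in terms of ordinary derivatives. The paper therefore simply applies the $L^p_{\delta'}$-estimate at the shifted weight $\delta'=\delta+|\gamma|$:
\[
\big\|\langle x\rangle^{\delta+|\gamma|}\partial^\gamma\partial^\alpha\K_\lambda\varphi\big\|_{L^p}
=\big\|\partial^\alpha\K_\lambda(\partial^\gamma\varphi)\big\|_{L^p_{\delta+|\gamma|}}
\le\frac{C_{\delta+|\gamma|}}{|\lambda|^{1-|\alpha|/2}}\,\big\|\partial^\gamma\varphi\big\|_{L^p_{\delta+|\gamma|}},
\]
and sums over $|\gamma|\le m$. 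No commutators are needed and nothing has to be resummed. Your equivalence-of-norms step also already introduces $m$-dependent constants (the Leibniz expansion of $Z^\gamma$ has combinatorially many terms), so even before the commutator issue arises the reformulation costs you the uniformity you are trying to protect.

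Your treatment of the base case $m=0$ is essentially correct and parallel to the paper: both rescale to $|\tilde\lambda|=1$, split the kernel into a locally-Newtonian singular part and an exponentially decaying integrable remainder, absorb the weight via Peetre's inequality (the paper uses the refinement $\langle x\rangle^\delta/\langle y\rangle^\delta=1+O(|x-y|)$ on the kernel's support), and invoke a Calder\'on--Zygmund argument for $|\alpha|=2$. The paper computes the distributional second derivatives of the cut-off Newtonian piece explicitly and proves a tailored lemma for the resulting truncated CZ operator on $L^p_\delta$; your Mikhlin--H\"ormander alternative is legitimate. As a side remark, the paper's parenthetical claim that $C$ is independent of $m$ is optimistic even along its own route, since the final constant is $\max_{0\le j\le m}C_{\delta+j}$; this plays no role downstream, where $m$ is fixed.
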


\begin{proof}[Proof of Proposition \ref{prop:resovent_estimates_W}]
We will concentrate our attention on the case when $d\ge 3$. (The case $d=2$ is treated in the same way.)
For any $\lambda\in\C\setminus(-\infty,0]$ the fundamental solution of the operator
$(\lambda-\Delta) : \Sz'\to \Sz'$ is
\[
K_\lambda(x):=\frac{i}{4}\left(\frac{i\sqrt{\lambda}}{2\pi|x|}\right)^\mu H_\mu^{(1)}\big(i\sqrt{\lambda}|x|\big),
\quad\mu:=\frac{d}{2}-1,
\]
where $H_\mu^{(1)}(z)$ is the first Hankel function (see \cite{Taylor}).
Recall from \cite[\S 5.6]{Lebedev} that $H_\mu^{(1)}$ is holomorphic in $\C\setminus(-\infty,0]$ and
\begin{equation}\label{eq:hankel_near_zero}
H_\mu^{(1)}(z)=C_1(\mu)\frac{1}{z^\mu}\big(1+g(z^2)\big)
\end{equation}
where $C_1(\mu)$ is a constant and $g$ is a holomorphic function in an open neighborhood of $z=0$ such that $g(0)=0$. 
Moreover,  for any $0<\epsilon<\pi/2$ and for $z$ in the cone $|\arg z|\le\pi-\epsilon$ one has (\cite[\S 5.11]{Lebedev})
\begin{equation}\label{eq:hankel_at_infinity}
H_\mu^{(1)}(z)=C_2(\mu)\frac{1}{\sqrt{z}}\,e^{i z}\big(1+O(1/|z|)\big)\quad\text{as}\quad|z|\to\infty
\end{equation}
where $C_2(\mu)$ is a constant. Also, recall from \cite[\S 5.6]{Lebedev}) that
\begin{equation}\label{eq:hankel_derivative}
2\frac{d}{dz}H_\mu^{(1)}(z)=H_{\mu-1}^{(1)}(z)-H_{\mu+1}^{(1)}(z).
\end{equation}
Now, take $0<\beta<1$, $0<\epsilon<\pi/2$, and assume that $\lambda\in R_{\beta,\epsilon}$.
Then, 
\begin{equation}\label{eq:K_lambda}
K_\lambda(x)=C_\mu\lambda^\mu H\big(i\sqrt{\lambda}|x|\big)\quad\text{where}\quad
H(z):=\frac{1}{z^\mu}\,H_\mu^{(1)}(z),\quad z\in\C\setminus\R_{\le 0}.
\end{equation}
For a given cut-off function $\chi\in C_c^\infty(\R)$ such that $\chi\equiv 1$ on $[-1,1]$ and $\chi\equiv 0$ on $\R\setminus[-2,2]$
we write
\begin{equation}\label{eq:H_representation}
H(z)=C_1(\mu)\frac{1}{z^{d-2}}\chi(|z|)+H_1(z)
\end{equation}
where $H_1(z):=H(z)-C_1(\mu)\frac{1}{z^{d-2}}\chi(|z|)$ and $z\in\C\setminus\R_{\le 0}$. It follows from \eqref{eq:hankel_near_zero},
\eqref{eq:hankel_at_infinity}, and \eqref{eq:K_lambda}, that for $z\in\C\setminus\R_{\le 0}$,
\begin{equation}\label{eq:H1_near_zero}
H_1(z)=C_3(\mu)\frac{1}{z^{d-4}}\big(1+g_1(z^2)\big),
\end{equation}
where $g_1$ is a holomorphic function in an open neighborhood of $z=0$ and $g_1(0)=0$.
Moreover,
\begin{equation}\label{eq:H1_at_infinity}
H_1(z)=\frac{1}{z^\mu}H_\mu^{(1)}(z)\quad\text{for}\quad |z|\ge 2\quad\text{and}\quad z\in\C\setminus\R_{\le 0}.
\end{equation}
In view of \eqref{eq:K_lambda} and \eqref{eq:H_representation} we have
\begin{equation*}
K_\lambda(x)=K_\lambda^{(0)}(x)+K_\lambda^{(1)}(x)
\end{equation*}
where
\begin{equation}\label{eq:K(0)}
K_\lambda^{(0)}(x):=C_4(\mu)\frac{1}{|x|^{d-2}}\chi\big(\sqrt{|\lambda|}|x|\big)
\end{equation}
and
\begin{equation}\label{eq:K(1)}
K_\lambda^{(1)}(x):=C_\mu\lambda^\mu H_1\big(i\sqrt{\lambda}|x|\big)
\end{equation}
for some constants $C_4(\mu)$.
Denote by $\K_\lambda^{(0)}$ and $\K_\lambda^{(1)}$ the convolution operators corresponding to
$K_\lambda^{(0)}$ and $K_\lambda^{(1)}$. By definition,
\[
\K_\lambda=\K_\lambda^{(0)}+K_\lambda^{(1)}.
\]
First, we restrict our attention to $\K_\lambda^{(1)}$. 
Since the multiplication operator 
\begin{equation}\label{eq:multiplication_operator}
J_\delta : L^p_\delta\to L^p,\quad(J_\delta f)(x):=\x^\delta f(x),
\end{equation}
is an isometry, in order to show that $\K_\lambda^{(1)}$ is a bounded operator in $L^p_\delta$ and estimate its operator norm 
we will estimate the operator norm of $J_\delta\K_\lambda^{(1)}J_\delta^{-1}$ in $L^p$. 
To this end, we will apply the Schur test to the integral kernel of $J_\delta\K_\lambda^{(1)}J_\delta^{-1}$,
\[
K_\delta^{(1)}(x,y;\lambda):=C_\mu\lambda^\mu H_1\big(i\sqrt{\lambda}|x-y|\big)\x^\delta\y^{-\delta}.
\]
For any $x\in\R^d$ and $\lambda\in R_{\beta,\epsilon}$ we have
\begin{eqnarray}
\int_{\R^d}\big|K_\delta^{(1)}(x,y;\lambda)\big|\,dy&\le&|C_\mu||\lambda|^\mu
\int_{\R^d}\big|H_1\big(i\sqrt{\lambda}|x-y|\big)\big|\langle x-y\rangle^{|\delta|}\,dy\nonumber\\
&=&\Omega_{d-1}|C_\mu||\lambda|^\mu
\int_0^\infty\big|H_1\big(i\sqrt{\lambda}r\big)\big|(1+r^2)^{|\delta|/2}r^{d-1}\,dr\nonumber\\
&\le&\Omega_{d-1}|C_\mu|\frac{1}{|\lambda|}
\int_0^\infty\big|H_1\big(s e^{i\theta}\big)\big|\big(1+(s^2/\beta)\big)^{|\delta|/2}s^{d-1}\,ds\label{eq:schur1}
\end{eqnarray}
where we applied Lemma \ref{lem:elementary}, passed to the new variable $s=\sqrt{|\lambda|}r$ in the integral,
and then set $\theta:=\frac{\pi}{2}+\arg\sqrt{\lambda}$. Since $\lambda\in R_{\beta,\epsilon}$ we have
\begin{equation}\label{eq:theta_range}
\frac{\epsilon}{2}\le\theta\le\pi-\frac{\epsilon}{2}.
\end{equation}
It follows from \eqref{eq:hankel_at_infinity}, \eqref{eq:H1_near_zero},  and \eqref{eq:H1_at_infinity} that there exist constants 
$0<s_1<s_2<\infty$ and $C>0$ such that for any $\theta$ in \eqref{eq:theta_range},
\begin{equation}\label{eq:iniformity1}
\big|H_1\big(s e^{i\theta}\big)\big|\le
\left\{
\begin{array}{cc}
C/s^{d-4},&0\le s\le s_1,\\
C,&s_1\le s\le s_2,\\
Ce^{-s\sin\frac{\epsilon}{2}}/s^{\frac{d-1}{2}},&s_1\le s<\infty.
\end{array}
\right.
\end{equation}
This together with \eqref{eq:schur1} then implies that there exists a constant $C\equiv C_{\beta,\epsilon,d}>0$ such that
\[
\int_{\R^d}\big|K_\delta^{(1)}(x,y;\lambda)\big|\,dy\le C/|\lambda|.
\]
Similar arguments show that $\int_{\R^d}\big|K_\delta^{(1)}(x,y;\lambda)\big|\,dx$ has the same upper bound.
Hence, by the Schur test $\K_\lambda^{(1)} : L^p_\delta\to L^p_\delta$ and
\[
\big\|\K_\lambda^{(1)}\big\|_{\LL(L^p_\delta)}=\big\|J_\delta\K_\lambda^{(1)} J_\delta^{-1}\big\|_{\LL(L^p)}\le C/|\lambda|
\]
for any $\lambda\in R_{\beta,\epsilon}$. Now, take a multi-index $\alpha\in\Z^d_{\ge 0}$ such that $|\alpha|\le 2$. 
Then, $\partial^\alpha\big(\K_\lambda^{(1)}\varphi\big)=K_{\lambda,\alpha}^{(1)}*\varphi$ for any 
$\varphi\in C_c^\infty$ where
\begin{equation}\label{eq:K1_derivative}
K_{\lambda,\alpha}^{(1)}:=\partial^\alpha K_\lambda^{(1)}
\end{equation}
and $\partial^\alpha$ denotes the distributional partial derivative in $\Sz'$. One concludes from \eqref{eq:H1_near_zero}
and \eqref{eq:K(1)} that the weak partial derivative $\partial^\alpha$ in \eqref{eq:K1_derivative} coincides with 
the pointwise derivative. Then, applying the Schur test to $J_\delta K_{\lambda,\alpha}^{(1)}J_\delta^{-1}$ and arguing 
as above one concludes from \eqref{eq:hankel_near_zero}, \eqref{eq:H1_near_zero}, \eqref{eq:H1_at_infinity}, 
\eqref{eq:hankel_derivative}, and \eqref{eq:K(1)} that $\K_\lambda^{(1)} : L^p_\delta\to L^p_\delta$ and 
that there exists $C\equiv C_{\alpha,\beta,\epsilon,\delta}>0$ such that 
\begin{equation}\label{eq:K1_alpha}
\big\|\partial^\alpha\K_\lambda^{(1)}\big\|_{\LL(L^p_\delta)}\le C/|\lambda|^{1-\frac{|\alpha|}{2}}
\end{equation}
for any $\lambda\in R_{\beta,\epsilon}$. Now, consider the operator $\K_\lambda^{(0)}$. Note that by \eqref{eq:K(0)}
its kernel $K_\lambda^{(0)}$ vanishes identically at infinity and is proportional to the fundamental solution of 
the Laplacian $\Delta : \Sz'\to \Sz'$ in an open neighborhood of zero. 
For any $1\le l,j\le d$ and for any $\lambda\in R_{\beta,\epsilon}$ one easily obtains from the divergence theorem that
the following formulas for the distributional partial derivatives of $K_\lambda^{(0)}$ hold
\begin{equation}\label{eq:K0_first_derivative}
\big(\partial_l K_\lambda^{(0)}\big)(x)=(2-d)\frac{1}{|x|^{d-1}}\frac{x_l}{|x|}\chi\big(\sqrt{|\lambda|}|x|\big)+
\sqrt{|\lambda|}\frac{1}{|x|^{d-2}}\frac{x_l}{|x|}\chi'\big(\sqrt{|\lambda|}|x|\big)
\end{equation}
and 
\begin{equation}\label{eq:K0_second_derivative}
\begin{array}{ccl}
\big(\partial_j\partial_l  K_\lambda^{(0)}\big)(x)&=&
(2-d)\,\mathop{\rm P.V.}\frac{1}{|x|^d}\Big(\delta_{jl}-d\,\frac{x_j}{|x|}\frac{x_l}{|x|}\Big)\chi\big(\sqrt{|\lambda|}|x|\big)\\
&+&\sqrt{|\lambda|}\,\frac{1}{|x|^{d-1}}\Big(\delta_{jk}+(3-2d)\frac{x_j}{|x|}\frac{x_l}{|x|}\Big)\chi'\big(\sqrt{|\lambda|}|x|\big)\\
&+&|\lambda|\,\frac{1}{|x|^{d-2}}\frac{x_j}{|x|}\frac{x_l}{|x|}\chi''\big(\sqrt{|\lambda|}|x|\big)\\
&+&(2-d)\,\delta_{jl}\,\frac{\Omega_{d-1}}{d}\,\delta(x)
\end{array}
\end{equation}
where $\delta(x)$ is the Dirac delta function at zero, $\delta_{jk}$ is the Kronecker delta symbol, and
\[
\mathop{\rm P.V.}\frac{1}{|x|^d}\Big(\delta_{jl}-d\,\frac{x_j}{|x|}\frac{x_l}{|x|}\Big)\in \Sz'
\]
denotes the Cauchy principal value
\begin{equation}\label{eq:principal_value}
\lim_{\epsilon_0\to 0+}\int_{|x|\le\epsilon_0}\frac{1}{|x|^d}\Big(\delta_{jl}-d\,\frac{x_j}{|x|}\frac{x_l}{|x|}\Big)\varphi(x)\,dx
\end{equation}
for $\varphi\in \Sz$.

\begin{Rem}\label{rem:CZ-condition}
The isometric action of the orthogonal group on the unit sphere $\s^{d-1}$ easily implies that
\begin{equation}\label{eq:integral_relation}
\int_{\s^{d-1}}x_jx_l\,d\sigma=\frac{\Omega_{d-1}}{d}\,\delta_{jl}
\end{equation}
where $d\sigma$ denotes the volume form on the unit sphere. 
(In fact, for $j\ne l$ the reflection transformation $x_k\mapsto x_k$ for $k\ne j$ and $x_j\mapsto-x_j$, 
when restricted to $\s^{d-1}$, preserves the volume form on $\s^{d-1}$ and changes the sign of the integrand in \eqref{eq:integral_relation}.
This implies that \eqref{eq:integral_relation} holds for $j\ne l$.
On the other side, one has by symmetry that $\int_{\s^{d-1}}x_1^2\,d\sigma=...=\int_{\s^{d-1}}x_d^2\,d\sigma$.
Since $\int_{\s^{d-1}}\sum_{1\le k\le d}x_k^2\,d\sigma=\Omega_{d-1}$ we then obtain that \eqref{eq:integral_relation} holds for $j=l$.)
This shows that the homogeneous function of degree zero $\delta_{jl}-d\,\frac{x_j}{|x|}\frac{x_l}{|x|}$ appearing in 
\eqref{eq:principal_value} has mean value zero on $\s^{d-1}$. 
In particular, we see that the first term on the right hand side of \eqref{eq:K0_second_derivative} defines, 
by taking convolution, a SIO of Calderon-Zygmund type (see e.g. \cite{Stein}, Ch. II).
\end{Rem} 
By combining \eqref{eq:K0_first_derivative} and \eqref{eq:K0_second_derivative} with Lemma \ref{lem:schur+CZ} below 
we conclude that for any multi-index $|\alpha|\le 2$ there exists $C\equiv C_{\alpha,\beta,\epsilon,\delta}>0$ such that 
$\big\|\partial^\alpha\K_\lambda^{(0)}\big\|_{\LL(L^p_\delta)}\le C/|\lambda|^{1-\frac{|\alpha|}{2}}$
for any $\lambda\in R_{\beta,\epsilon}$.
This together with \eqref{eq:K1_alpha} then implies that for any multi-index $\alpha$ with $|\alpha|\le 2$ there exists 
a constant $C\equiv C_{\alpha,\beta,\epsilon,\delta}>0$ such that for any $\varphi\in C_c^\infty(\R)$ we have
\begin{equation}\label{eq:Klambda_alpha}
\big\|\partial^\alpha\big(\K_\lambda\varphi\big)\big\|_{L^p_\delta}\le\frac{C}{|\lambda|^{1-\frac{|\alpha|}{2}}}\|\varphi\|_{L^p_\delta}
\end{equation}
for any $\lambda\in R_{\beta,\epsilon}$.
For $\varphi\in C_c^\infty$ and any multi-index $\gamma$ with $|\gamma|\le m$ we apply \eqref{eq:Klambda_alpha} 
with $\varphi$ replaced by $\partial^\gamma\varphi$ and $\delta$ replaced by $\delta+|\gamma|$ to obtain
\[
\big\|\partial^\gamma\big[\partial^\alpha(\K_\lambda\varphi)\big]\big\|_{L^p_{\delta+|\gamma|}}\le
\frac{C}{|\lambda|^{1-\frac{|\alpha|}{2}}}\big\|\partial^\gamma\varphi\big\|_{L^p_{\delta+|\gamma|}}.
\]
By summing these inequalities over $|\gamma|\le m$ we obtain that for any $\varphi\in C_c^\infty$ we have that
\[
\big\|\partial^\alpha\big(\K_\lambda\varphi\big)\big\|_{W^{m,p}_\delta}\le
\frac{C}{|\lambda|^{1-\frac{|\alpha|}{2}}}\|\varphi\|_{W^{m,p}_\delta}
\]
for any $\lambda\in R_{\beta,\epsilon}$. This completes the proof of the Proposition.
\end{proof}

The following lemma was used in the proof of Proposition \ref{prop:resovent_estimates_W}.
For any $0<\epsilon<\pi$, $\beta>0$, $\kappa\ge 0$, and for any Lipschitz continuous function on the sphere $A : \s^{d-1}\to\R$ 
such that $\int_{\s^{d-1}}A\,d\sigma=0$ when $\kappa=0$ consider the convolution operator 
\begin{equation}\label{eq:P-convolution}
\mathcal{P}_{\lambda,\kappa} : \varphi\mapsto P_{\lambda,\kappa}*\varphi,\quad\varphi\in C_c^\infty,
\end{equation}
where
\[
P_{\lambda,\kappa}(x):=
\left\{
\begin{array}{ll}
\frac{A(x/|x|)}{|x|^{d-\kappa}}\eta\big(\sqrt{|\lambda|}|x|\big),&\kappa>0,\\
\mathop{\rm P.V.}\frac{A(x/|x|)}{|x|^d}\eta\big(\sqrt{|\lambda|}|x|\big),&\kappa=0,
\end{array}
\right.
\]
and $\eta\in C_c^\infty(\R)$ with $\eta\equiv 1$ [or $\eta\equiv 0$] in an open neighborhood of zero in $\R$.

\begin{Lem}\label{lem:schur+CZ}
Assume that $1<p<\infty$, $\delta\in\R$, and $d\ge 3$. Then, for any $0<\epsilon<\pi$, $\beta>0$, $\kappa\ge 0$, 
and for any $A : \s^{d-1}\to\R$ as described above the convolution operator 
\eqref{eq:P-convolution} extends to a bounded operator $\mathcal{P}_{\lambda,\kappa} : L^p_\delta\to L^p_\delta$ such that
$\big\|\mathcal{P}_{\lambda,\kappa}\big\|_{\LL(L^p_\delta)}\le C/|\lambda|^{\kappa/2}$
with $C>0$ independent of the choice of $\lambda\in R_{\beta,\epsilon}$.
\end{Lem}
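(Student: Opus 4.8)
The plan is to reduce everything to the unweighted space $L^p$ via the isometry $J_\delta$ from \eqref{eq:multiplication_operator}, and then to separate a weakly-singular regime (handled by the Schur test, exactly as in the proof of Proposition \ref{prop:resovent_estimates_W}) from a genuine Calderón--Zygmund regime (handled by scaling together with the classical theory). Since $J_\delta:L^p_\delta\to L^p$ is an isometry, it suffices to bound $J_\delta\mathcal{P}_{\lambda,\kappa}J_\delta^{-1}$ on $L^p$; this operator has integral kernel $\langle x\rangle^\delta P_{\lambda,\kappa}(x-y)\langle y\rangle^{-\delta}$. Throughout I will use that, on the support of $\eta(\sqrt{|\lambda|}\,|x-y|)$, one has $|x-y|\le M/\sqrt{|\lambda|}\le M/\sqrt{\beta}$ (where $\mathrm{supp}\,\eta\subseteq[-M,M]$ and $|\lambda|\ge\beta$), together with the Peetre-type inequality $\langle x\rangle^\delta\langle y\rangle^{-\delta}\le 2^{|\delta|/2}\langle x-y\rangle^{|\delta|}$. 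Note that the principal value is genuinely present only in the case $\kappa=0$ with $\eta\equiv 1$ near the origin; in all other cases $P_{\lambda,\kappa}$ is an ordinary locally integrable function.

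First I would dispose of the weakly-singular regime, namely $\kappa>0$, or $\kappa=0$ with $\eta\equiv 0$ near the origin. Here no principal value is needed, so I estimate the row and column sums of the conjugated kernel directly. By Peetre's inequality and passage to polar coordinates,
\[
\int_{\R^d}\big|\langle x\rangle^\delta P_{\lambda,\kappa}(x-y)\langle y\rangle^{-\delta}\big|\,dy
\le C\|A\|_\infty\,\Omega_{d-1}\!\int_0^\infty r^{\kappa-1}\,|\eta(\sqrt{|\lambda|}\,r)|\,(1+r^2)^{|\delta|/2}\,dr,
\]
and the substitution $s=\sqrt{|\lambda|}\,r$ turns the right-hand side into $C|\lambda|^{-\kappa/2}\int_0^\infty s^{\kappa-1}|\eta(s)|(1+s^2/|\lambda|)^{|\delta|/2}\,ds$. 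Because $\eta$ has compact support, $|\lambda|\ge\beta$, and $\kappa>0$ (or else the integrand vanishes near $s=0$ when $\eta\equiv 0$ there), the last integral is bounded uniformly in $\lambda\in R_{\beta,\epsilon}$. The column sum satisfies the identical bound, so the Schur test gives $\|\mathcal{P}_{\lambda,\kappa}\|_{\LL(L^p_\delta)}\le C|\lambda|^{-\kappa/2}$.

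The remaining and genuinely delicate case is $\kappa=0$ with $\eta\equiv 1$ near the origin, where $P_{\lambda,0}$ is a true principal-value Calderón--Zygmund kernel. Here I split the conjugated operator as $J_\delta\mathcal{P}_{\lambda,0}J_\delta^{-1}=\mathcal{P}_{\lambda,0}+\mathcal{Q}_\lambda$, where $\mathcal{Q}_\lambda$ has kernel $P_{\lambda,0}(x-y)\big[(\langle x\rangle/\langle y\rangle)^\delta-1\big]$. For the principal part $\mathcal{P}_{\lambda,0}$ on the \emph{unweighted} space $L^p$ I exploit that $P_{\lambda,0}$ depends on $\lambda$ only through the real dilation parameter $\sqrt{|\lambda|}$: conjugating by the dilation $f\mapsto f(\cdot/\sqrt{|\lambda|})$ carries $\mathcal{P}_{\lambda,0}$ onto the \emph{fixed} operator $T_1$ with kernel $\mathrm{P.V.}\,\frac{A(z/|z|)}{|z|^d}\eta(|z|)$, whence $\|\mathcal{P}_{\lambda,0}\|_{\LL(L^p)}=\|T_1\|_{\LL(L^p)}$ is independent of $\lambda$; and $T_1$ is a smoothly truncated kernel homogeneous of degree $-d$ whose Lipschitz angular profile $A$ has mean zero, hence a bounded operator on $L^p$ for $1<p<\infty$ by the classical theory (cf. \cite{Stein}, Ch. II). For the correction $\mathcal{Q}_\lambda$ I use that $|\nabla\langle x\rangle|\le 1$ gives $|\langle x\rangle-\langle y\rangle|\le|x-y|$, and that the ratio $\langle x\rangle/\langle y\rangle$ stays in a fixed compact subset of $(0,\infty)$ on the support of the kernel (where $|x-y|\le M/\sqrt{\beta}$), so the mean value theorem yields $\big|(\langle x\rangle/\langle y\rangle)^\delta-1\big|\le C_\delta|x-y|$. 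Since this commutator factor vanishes on the diagonal, the principal value collapses to an absolutely convergent integral, and the extra power of $|x-y|$ cancels one power of the singularity: the kernel of $\mathcal{Q}_\lambda$ is bounded by $C|x-y|^{-(d-1)}|\eta(\sqrt{|\lambda|}\,|x-y|)|$, which is exactly of the weakly-singular type already treated with $\kappa=1$. The Schur test then gives $\|\mathcal{Q}_\lambda\|_{\LL(L^p)}\le C|\lambda|^{-1/2}\le C\beta^{-1/2}$. Adding the two contributions yields $\|\mathcal{P}_{\lambda,0}\|_{\LL(L^p_\delta)}\le C$ uniformly in $\lambda\in R_{\beta,\epsilon}$, which is the claimed bound with $\kappa=0$.

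I expect the main obstacle to be precisely this last case: reconciling the principal-value (non-absolutely-integrable) nature of the $\kappa=0$ kernel with the demand for a constant \emph{uniform} in $\lambda$. Two observations resolve it: first, that the only $\lambda$-dependence resides in the real factor $\sqrt{|\lambda|}$, so scaling reduces the hard $L^p$ estimate to a single fixed Calderón--Zygmund operator and eliminates any $\lambda$-dependence in that constant; and second, that conjugation by the weight produces a commutator factor vanishing linearly on the diagonal, downgrading the singular integral to a weakly singular one amenable to the same Schur argument used elsewhere in this section. The remaining point to verify carefully is the elementary two-sided bound on $\langle x\rangle/\langle y\rangle$ on the support of the kernel, as it is what makes the constant $C_\delta$ in the commutator estimate independent of $\lambda$.
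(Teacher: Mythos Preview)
Your proposal is correct and follows essentially the same approach as the paper: conjugate by $J_\delta$, split the weight ratio as $1+O(|x-y|)$ on the (uniformly bounded) support, reduce the principal part to a fixed truncated Calder\'on--Zygmund operator by dilation, and treat the weakly singular remainder and the cases $\kappa>0$ or $\eta\equiv 0$ near $0$ by direct kernel bounds. The only cosmetic differences are that the paper uses Young's inequality rather than the Schur test for the weakly singular pieces (equivalent here, since after Peetre the bound is by a convolution kernel in $L^1$), and that the paper isolates the $L^p$-boundedness of the smoothly truncated singular integral as a separate lemma (Lemma~\ref{lem:claderon-zygmund}), verifying the H\"ormander condition explicitly, whereas you invoke Stein Ch.~II directly.
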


\begin{proof}[Proof of Lemma \ref{lem:schur+CZ}]
Let us first consider the case when $\eta\equiv 1$ in an open neighborhood of zero and $\kappa=0$.
Denote $\mathcal{P}_\lambda:=\mathcal{P}_{\lambda,\kappa}|_{\kappa=0}$ and
assume that the support of $\eta$ is contained within an open ball in $\R^d$ of radius $\rho>0$.
It then follows from the second statement of Lemma \ref{lem:elementary} that
\[
\x^\delta/\y^\delta=1+O(|x-y|)
\]
with constant independent of $\lambda\in R_{\beta,\epsilon}$ and $x,y\in\R^d$ with $|x-y|<\rho/\beta $.
By combining this with the fact that $\eta\big(\sqrt{|\lambda|}|x-y|\big)=0$ for 
$|x-y|\ge\rho/\sqrt{|\lambda|}$ we conclude that for any $\varphi\in C_c^\infty$,
\begin{eqnarray}
\big(J_\delta\mathcal{P}_\lambda J_\delta^{-1}\varphi\big)(x)&=&\lim_{\epsilon_0\to 0+}
\int_{|y|\ge\epsilon_0}\frac{A(x-y/|x-y|)}{|x-y|^d}\eta\big(\sqrt{|\lambda|}|x-y|\big)\x^\delta\y^{-\delta}\varphi(y)\,dy\nonumber\\
&=&\lim_{\epsilon_0\to 0+}\int_{|y|\ge\epsilon_0}\frac{A(x-y/|x-y|)}{|x-y|^d}\eta\big(\sqrt{|\lambda|}|x-y|\big)
\big(1+O(|x-y|)\big)\varphi(y)\,dy\nonumber\\
&=&\lim_{\epsilon_0\to 0+}\int_{|y|\ge\epsilon_0}\frac{A(x-y/|x-y|)}{|x-y|^d}\eta\big(\sqrt{|\lambda|}|x-y|\big)\varphi(y)\,dy+
\big(\mathcal{Q}_{\lambda,\delta}\varphi\big)(x)\label{eq:CZ-part}
\end{eqnarray}
where $\mathcal{Q}_{\lambda,\delta}$ has an integral kernel $Q_{\lambda,\delta}(x-y)$ such that
\[
\big|Q_{\lambda,\delta}(x-y)\big|\le M\,\frac{|A(x-y/|x-y|)|}{|x-y|^{d-1}}\eta\big(\sqrt{|\lambda|}|x-y|\big) 
\]
with a constant $M>0$ independent on $x,y\in\R^d$ and $\lambda\in R_{\beta,\epsilon}$.
Note that $Q_{\lambda,\delta}\in L^1$ and
\begin{align}
\|Q_{\lambda,\delta}\|_{L^1}\le M\int_{\R^d}\frac{|A(x/|x|)|}{|x|^{d-1}}\eta\big(\sqrt{|\lambda|}|x|\big)\,dx=
\frac{M}{|\lambda|^{1/2}}\int_{\R^d}\frac{|A(y/|y|)|}{|y|^{d-1}}\eta(|y|)\,dy
\end{align}
where we pass to the variable $y=\sqrt{|\lambda|} x$ in the integral. Then, by Young's inequality we obtain that
there exists a constant $C>0$ such that 
\begin{equation}\label{eq:Q_lambda}
\|\mathcal{Q}_{\lambda,\delta}\|_{\LL(L^p)}\le C/|\lambda|^{1/2}
\end{equation}
for any $\lambda\in R_{\beta,\epsilon}$. 
Let us now consider the first expression appearing on the right side of \eqref{eq:CZ-part},
\begin{equation}\label{eq:T_lambda}
(\mathcal{T}_\lambda\varphi)(x):=
\lim_{\epsilon_0\to 0+}\int_{|y|\ge\epsilon_0}\frac{A(x-y/|x-y|)}{|x-y|^d}\eta\big(\sqrt{|\lambda|}|x-y|\big)\varphi(y)\,dy.
\end{equation}
By changing the variables $y':=\sqrt{|\lambda|}y$ and $x':=\sqrt{|\lambda|}x$ we see that
\begin{equation}\label{eq:T-conjugation}
\mathcal{T}_{\lambda}\varphi=\big(\mathcal{H}_\lambda\mathcal{T}\mathcal{H}_\lambda^{-1}\big)\varphi
\end{equation}
where
\begin{equation}\label{eq:T1}
\big(\mathcal{T}\varphi\big)(x'):=\int_{\R^d}\frac{A(x'-y'/|x'-y'|)}{|x'-y'|^d}\eta\big(|x'-y'|\big)\varphi(y')\,dy'
\end{equation}
and $\big(\mathcal{H}_\lambda\varphi\big)(x):=\varphi\big(\sqrt{|\lambda|}x\big)$.
It follows from Lemma \ref{lem:claderon-zygmund} that the transformation \eqref{eq:T1} extends 
to a bounded operator $\mathcal{T} : L^p\to L^p$ with norm independent of $\lambda\in R_{\beta,\epsilon}$.
Since $\|\mathcal{H}_\lambda\|_{\LL(L^p)}= 1/|\lambda|^{d/2p}$ and since $\mathcal{H}_\lambda^{-1}=\mathcal{H}_{\lambda^{-1}}$
we obtain from \eqref{eq:T-conjugation} that
\[
\big\|\mathcal{T}_{\lambda}\big\|_{\LL(L^p)}\le
\big\|\mathcal{H}_\lambda\big\|_{\LL(L^p)}\big\|\mathcal{T}\big\|_{\LL(L^p)}\big\|\mathcal{H}_\lambda^{-1}\big\|_{\LL(L^p)}
=\big\|\mathcal{T}\big\|_{\LL(L^p)}
\]
which proves that the expression in \eqref{eq:T_lambda} defines a bounded operator in 
$L^p$ with norm independent of $\lambda\in R_{\beta,\epsilon}$. 
By combining this with \eqref{eq:Q_lambda} and the fact that \eqref{eq:multiplication_operator} is an isometry we conclude 
that $\mathcal{P}_\lambda : L^p_\delta\to L^p_\delta$ is a bounded operator such that 
$\|\mathcal{P}_\lambda\|_{\LL(L^p_\delta)}\le C$ with a constant $C>0$ independent of $\lambda\in R_{\beta,\epsilon}$.
This completes the proof of the lemma in the case when $\kappa=0$.
The case when $\eta\equiv 1$ in a neighborhood of zero and $\kappa>0$ as well as the case when 
$\eta\equiv 0$ in an open neighborhood of zero follow easily form Young's inequality and the arguments used 
to prove \eqref{eq:Q_lambda}.
\end{proof}

Now we are ready to prove Theorem \ref{th:sectorial_W}.

\begin{proof}[Proof of Theorem \ref{th:sectorial_W}]
Take $\omega>0$ and $0<\epsilon<\pi$. By choosing $\beta>0$ sufficiently small we see that
$\Sigma_{\omega,\epsilon}\subseteq R_{\beta,\epsilon}$. In particular, with an appropriate choice of the constant $C>0$, 
the estimate \eqref{eq:resolvent_W-spaces} in Proposition \ref{prop:resovent_estimates_W} holds uniformly in 
$\lambda\in\Sigma_{\omega,\epsilon}$ and $|\alpha|\le 2$.
In particular, or any $\lambda\in\Sigma_{\omega,\epsilon}$ the convolution operator \eqref{eq:KK_lambda}
extends to a bounded linear map 
\begin{equation}\label{eq:KK_lambda'}
\K_\lambda : W^{m,p}_\delta\to{\widetilde W}^{m+2,p}_\delta.
\end{equation}
Since $\K_\lambda$ is the fundamental solution of $\lambda-\Delta : \Sz'\to\Sz'$ for $\lambda\in\C\setminus(-\infty,0])$ we have that
$(\lambda-\Delta)\big(\K_\lambda\varphi\big)=\varphi$ for any test function $\varphi\in C_c^\infty$.
By the continuity of \eqref{eq:KK_lambda'} we then conclude that the bounded linear map
$(\lambda-\Delta) : {\widetilde W}^{m+2,p}_\delta\to W^{m,p}_\delta$ is onto. 
By the open mapping theorem and the fact that $\lambda-\Delta : \Sz'\to\Sz'$ for $\lambda\in\C\setminus(-\infty,0])$ is injective
we then see that $(\lambda-\Delta) : {\widetilde W}^{m+2,p}_\delta\to W^{m,p}_\delta$ is an isomorphism of Banach spaces 
for any $\lambda\in\Sigma_{\omega,\epsilon}$. In particular, this implies that the spectrum of $\Delta|_{W^{m,p}_\delta}$ is
contained in $(-\infty,0]$ and that the map \eqref{eq:KK_lambda'} is the resolvent $(\lambda-\Delta)^{-1}$ of $\Delta|_{W^{m,p}_\delta}$.
This proves (i). By Proposition \ref{prop:resovent_estimates_W},
\[
\big\|\partial^\alpha(\lambda-\Delta)^{-1}\big\|_{\LL(W^{m,p}_{\delta})}\le C/|\lambda|^{1-\frac{|\alpha|}{2}},\quad|\alpha|\le 2\,.
\]
By combining this with the fact that there exists $C_1\equiv C_1(\epsilon)>0$ such that $|\lambda-\omega|\le C_1|\lambda|$ 
for any $\lambda\in\Sigma_{\omega,\epsilon}$, we conclude the proof of (ii).
\end{proof}

Recall from \cite{Pazy,RenardyRogers} that by definition, a family $\{T(t)\}_{t\ge 0}$ of bounded linear maps of a Banach space $X$ is called
an {\em analytic semigroup on $X$ with angle $\vartheta\in (0,\pi/2]$} if the map $[0,\infty)\to\LL(X)$, $t\mapsto T(t)$, can be extended to 
the complex sector (cf. \eqref{eq:conic_sets})
\begin{equation}\label{eq:S-sector}
{\bbS}_\vartheta\equiv\big\{z\in\C\,\big|\, z\ne 0, |\arg z|<\vartheta\big\}
\end{equation}
such that: i) the map $z\mapsto T(z)$ is analytic from ${\bbS}_\vartheta$ to the space $\LL(X)$ of bounded linear operators on $X$; 
ii) $T(z)$ is a semigroup, i.e. $T(z_1+z_2)=T(z_1) T(z_2)$ for any $z_1,z_2\in {\bbS}_\vartheta$; iii) $S(z)$ converges strongly to 
the identity operator on $X$ as $z\to 0$ in ${\bbS}_\vartheta\cup\{0\}$. 
We have the following

\begin{Th}\label{th:SonW}
Assume that $1<p<\infty$, $m\ge 0$, and $\delta\in \R$. Then, $\{S(t)\}_{t\geq 0}$ is an analytic semigroup on $W_\delta^{m,p}$ 
with angle $\vartheta=\pi/2$ and generator $\Delta|_{W^{m,p}_\delta}$ with domain ${\widetilde W}^{m+2,p}_\delta$. 
The subspace $W^{m+2,p}_\delta\subseteq{\widetilde W}^{m+2,p}_\delta$ is invariant with respect to 
$\{S(t)\}_{t\geq 0}$ and for any $u_0\in W^{m+2,p}_\delta$ the heat equation \eqref{eq:heat} (with $\nu=1$) has a unique solution
$u\in C\big([0,\infty),W^{m+2,p}_\delta\big)\cap C^1\big([0,\infty),W^{m,p}_\delta\big)$ and $u(t)=S(t)u_0$ for $t\in[0,\infty)$.
\end{Th}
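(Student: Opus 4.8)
The plan is to deduce everything from the sectoriality estimates in Theorem~\ref{th:sectorial_W} together with the abstract theory of analytic semigroups (as in \cite{Pazy,RenardyRogers}). First I would record that, by Theorem~\ref{th:sectorial_W}(i), the operator $\Delta|_{W^{m,p}_\delta}$ is closed, has domain $\widetilde W^{m+2,p}_\delta$ which is dense in $W^{m,p}_\delta$ (as noted after \eqref{def:D(Lap|W)}), and has spectrum contained in $(-\infty,0]$. Fixing $\omega>0$ and an arbitrary $0<\epsilon<\pi/2$, the sector $\Sigma_{\omega,\epsilon}$ lies in the resolvent set, and the case $\alpha=0$ of Theorem~\ref{th:sectorial_W}(ii) gives the resolvent bound $\|(\lambda-\Delta)^{-1}\|_{\LL(W^{m,p}_\delta)}\le C/|\lambda-\omega|$ on $\Sigma_{\omega,\epsilon}$. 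Writing the opening half-angle as $\pi-\epsilon=\tfrac{\pi}{2}+(\tfrac{\pi}{2}-\epsilon)$, the standard generation theorem identifies $\Delta|_{W^{m,p}_\delta}$ as the generator of an analytic semigroup of angle $\tfrac{\pi}{2}-\epsilon$ on $W^{m,p}_\delta$; since $\epsilon$ is arbitrary, the semigroup extends analytically to all of $\bbS_{\pi/2}$, so the angle is exactly $\pi/2$, and density of the domain yields the strong continuity in condition~(iii) of the definition.

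This produces \emph{some} analytic semigroup $\{T(t)\}_{t\ge0}$; the key step is to identify it with the Gaussian semigroup $\{S(t)\}_{t\ge0}$ from \eqref{eq:Gaussian}. For $u_0\in W^{m,p}_\delta\subseteq\Sz'$ the orbit $t\mapsto T(t)u_0$ is continuous on $[0,\infty)$ and, by analyticity, solves $\partial_t u=\Delta u$ in $W^{m,p}_\delta$ --- hence in $\Sz'$ --- for $t>0$. On the other hand $S(t)u_0$ solves the same equation in $\Sz'$ with the same initial value by Proposition~\ref{prop:heat_equation_in_S'}. Invoking the uniqueness of solutions of the heat equation in $\Sz'$ established there, I would conclude $T(t)u_0=S(t)u_0$ for every $t\ge0$, so that $\{S(t)\}_{t\ge0}$ is precisely the analytic semigroup generated by $\Delta|_{W^{m,p}_\delta}$. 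I expect this identification, rather than the abstract generation, to be the main point, since it is where the concrete kernel \eqref{eq:Gaussian} must be matched to the functional-analytic construction.

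It then remains to treat the invariant subspace and the regularity of solutions. Since Theorem~\ref{th:sectorial_W} holds for every integer $m\ge0$ (with the same $\delta$) and $S(t)$ is the restriction to each space of one and the same map on $\Sz'$, I would apply the conclusion already obtained with $m$ replaced by $m+2$: this makes $\{S(t)\}_{t\ge0}$ an analytic, in particular $C_0$, semigroup on $W^{m+2,p}_\delta$, so each $S(t)$ is a bounded operator on $W^{m+2,p}_\delta$ and that subspace is invariant. Finally, for $u_0\in W^{m+2,p}_\delta$ strong continuity on $W^{m+2,p}_\delta$ gives $u:=S(\cdot)u_0\in C\big([0,\infty),W^{m+2,p}_\delta\big)$; and because $W^{m+2,p}_\delta\subseteq\widetilde W^{m+2,p}_\delta=D(\Delta|_{W^{m,p}_\delta})$ with bounded inclusion \eqref{eq:inclusions_W}, the standard theory for initial data in the domain gives $u\in C^1\big([0,\infty),W^{m,p}_\delta\big)$ with $\dot u(t)=\Delta S(t)u_0=S(t)\Delta u_0$ on all of $[0,\infty)$, so that $u$ solves \eqref{eq:heat} with $\nu=1$. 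Uniqueness follows because any solution in the stated class is a solution of the abstract Cauchy problem for the generator $\Delta|_{W^{m,p}_\delta}$ (equivalently, of the heat equation in $\Sz'$), which is unique; hence $u(t)=S(t)u_0$.
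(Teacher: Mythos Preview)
Your proposal is correct and follows essentially the same route as the paper: generate an abstract analytic semigroup from the sectoriality in Theorem~\ref{th:sectorial_W}, identify it with the Gaussian semigroup via uniqueness of the heat equation in $\Sz'$ (the paper packages this step as Corollary~\ref{coro:exp=S}), and then deduce invariance of $W^{m+2,p}_\delta$ and the stated regularity by applying the first conclusion at regularity $m+2$ together with the inclusion \eqref{eq:inclusions_W}. Your argument for the invariance is in fact slightly more explicit than the paper's, which simply asserts it.
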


\begin{Rem}\label{rem:SonW}
It follows from Theorem \ref{th:SonW} that for any $u_0\in W^{m+2,p}_\delta$ and $\vartheta\in(0,\pi/2)$
the holomorphic map $u : \bbS_{\vartheta}\to W^{m+2,p}_\delta$ when extended by continuity to $\bbS_{\vartheta}\cup\{0\}$ 
satisfies the (complex) heat equation $u_z=\Delta u$, $u|_{t=0}=u_0$.
\end{Rem}

\begin{proof}[Proof of Theorem \ref{th:SonW}]
Assume that $1<p<\infty$, $m\ge 0$, and $\delta\in \R$. 
The theorem follows directly from the resolvent estimates in Theorem \ref{th:sectorial_W}.
Indeed, take an arbitrary $\vartheta\in(0,\pi/2)$ and set $\alpha\equiv 0$ and $\epsilon\equiv\pi/2-\vartheta$.
Then, by Theorem \ref{th:sectorial_W} (ii), the operator $\Delta|_{W^{m,p}_\delta}$ with domain 
${\widetilde W}^{m+2,p}_\delta$ is sectorial on $\Sigma_{\omega,\vartheta+\pi/2}$ and hence, 
by \cite[Theorem 5.2, \S2.5]{Pazy}, generates an analytic semigroup $\{T(t)\}_{t\ge 0}$ on $W^{m,p}_\delta$ 
with angle $\vartheta\in(0,\pi/2)$. In particular, the space $W^{m,p}_\delta$ is invariant with respect to the semigroup and
for any initial data $u_0\in{\widetilde W}^{m+2,p}_\delta$ the heat equation \eqref{eq:heat} (with $\nu=1$) has a unique solution
$u(t):=T(t)u_0$, $t\ge 0$, such that
\begin{equation}\label{eq:heat_solution_W}
u\in C\big([0,\infty),{\widetilde W}^{m+2,p}_\delta\big)\cap C^1\big([0,\infty),W^{m,p}_\delta\big).
\end{equation}
Then, by Corollary \ref{coro:exp=S}, we obtain that $S(t)\equiv T(t)$ for any $t\ge 0$. Since the angle $\vartheta\in(0,\pi/2)$
can be chosen arbitrarily close to $\pi/2$, we conclude the proof of the first statement of the corollary.
Since the space $W^{m+2,p}_\delta$ is invariant with respect to the semigroup and by \eqref{eq:inclusions_W} 
it is contained in ${\widetilde W}^{m+2,p}_\delta$ we obtain from \eqref{eq:heat_solution_W} that
for any $u_0\in W^{m+2,p}_\delta$ the heat equation \eqref{eq:heat} (with $\nu=1$) has a unique solution
\[
u\in C\big([0,\infty),W^{m+2,p}_\delta\big)\cap C^1\big([0,\infty),W^{m,p}_\delta\big).
\]
This completes the proof of Theorem \ref{th:SonW}.
\end{proof}

Theorem \ref{th:sectorial_W}, Theorem \ref{th:SonW}, and the arguments used in the proof of \cite[Theorem 1.1.3 (iii)]{LLMP})
imply the following corollary.

\begin{Coro}\label{coro:S(t)-estimatesW}
Assume that $1<p<\infty$, $m\ge 0$, and $\delta\in \R$.
Then, for any $\omega>0$ and for any $\vartheta\in(0,\pi/2)$ there exists $C\equiv C_{\omega,\vartheta}>0$ such that 
for any multi-index $\alpha\in\Z^d_{\ge 0}$, $|\alpha|\le 2$, 
\begin{equation}\label{eq:S(t)-W-estimates}
\big\|\partial^\alpha S(z)\big\|_{\LL(W^{m,p}_\delta)}\le
C e^{\omega|z|}/|z|^{\frac{|\alpha|}{2}}\,.
\end{equation}
for any $z\in\bbS_\vartheta$. Moreover, for any multi-index $\alpha$ with $|\alpha|\le 2$
the map $z\mapsto\partial^\alpha S(z)$, $\bbS_{\pi/2}\to\LL(W^{m,p}_\delta)$, is holomorphic.
\end{Coro}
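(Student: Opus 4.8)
The plan is to represent the semigroup by a Dunford--Taylor (Cauchy) contour integral of the resolvent and to differentiate under the integral sign, using the refined resolvent bounds of Theorem \ref{th:sectorial_W}(ii) to control $\partial^\alpha$. This is exactly the scheme of \cite[Theorem 1.1.3 (iii)]{LLMP}, the only new input being the fractional gain $|\lambda-\omega|^{-(1-|\alpha|/2)}$ in the resolvent estimate. First I would fix $\omega>0$ and $\vartheta\in(0,\pi/2)$, choose $\epsilon\in(0,\pi/2-\vartheta)$ and an angle $\psi$ with $\pi/2+\vartheta<\psi<\pi-\epsilon$, and take the contour $\Gamma=\Gamma_\rho$ consisting of the two rays $\lambda=\omega+re^{\pm i\psi}$, $r\ge\rho$, joined by the arc $\lambda=\omega+\rho e^{i\phi}$, $|\phi|\le\psi$, oriented counterclockwise. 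By construction $\Gamma\subset\Sigma_{\omega,\epsilon}$ for every $\rho>0$, so the resolvent estimate of Theorem \ref{th:sectorial_W}(ii) holds along $\Gamma$. Since $\Delta|_{W^{m,p}_\delta}$ is sectorial (Theorem \ref{th:sectorial_W}) and generates the analytic semigroup $\{S(t)\}$ (Theorem \ref{th:SonW}), one has for $z\in\bbS_\vartheta$ the standard representation
\[
S(z)=\frac{1}{2\pi i}\int_\Gamma e^{\lambda z}(\lambda-\Delta)^{-1}\,d\lambda,
\]
the integral being independent of $\rho$ by Cauchy's theorem and the exponential decay of $e^{\lambda z}$ along the rays.

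The choice $\psi>\pi/2+\vartheta$ guarantees a uniform decay constant $c_0:=\min\{-\cos(\psi-\vartheta),-\cos(\psi+\vartheta)\}>0$ with $\re\big((\lambda-\omega)z\big)\le -c_0\,r|z|$ on the rays $|\lambda-\omega|=r$, hence $|e^{\lambda z}|\le e^{\omega|z|}e^{-c_0 r|z|}$ there. Next I would pull $\partial^\alpha$ inside the integral: since $\partial^\alpha\colon\widetilde W^{m+2,p}_\delta\to W^{m,p}_\delta$ is bounded for $|\alpha|\le 2$ and the $\LL\big(W^{m,p}_\delta,\widetilde W^{m+2,p}_\delta\big)$-valued integrand is absolutely integrable along $\Gamma$ (the worst term $|\alpha|=2$ contributes a $\lambda$-independent factor, tamed on the rays by $e^{-c_0 r|z|}$ and on the arc by its finite length), one obtains
\[
\partial^\alpha S(z)=\frac{1}{2\pi i}\int_\Gamma e^{\lambda z}\,\partial^\alpha(\lambda-\Delta)^{-1}\,d\lambda .
\]
Inserting $\|\partial^\alpha(\lambda-\Delta)^{-1}\|_{\LL(W^{m,p}_\delta)}\le C/|\lambda-\omega|^{1-|\alpha|/2}$ and taking operator norms, the ray part is bounded by $Ce^{\omega|z|}\int_\rho^\infty e^{-c_0 r|z|}r^{|\alpha|/2-1}\,dr$ and the arc part by $Ce^{\omega|z|}e^{\rho|z|}\rho^{|\alpha|/2}$.

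The decisive step is the scaling choice $\rho=1/|z|$. Then the arc contributes $Ce^{\omega|z|}|z|^{-|\alpha|/2}$ directly, while in the ray integral the substitution $s=r|z|$ turns $\int_{1/|z|}^\infty e^{-c_0 r|z|}r^{|\alpha|/2-1}\,dr$ into $|z|^{-|\alpha|/2}\int_1^\infty e^{-c_0 s}s^{|\alpha|/2-1}\,ds$, and the last integral is a finite constant for $|\alpha|\le 2$. Summing yields $\|\partial^\alpha S(z)\|_{\LL(W^{m,p}_\delta)}\le Ce^{\omega|z|}/|z|^{|\alpha|/2}$ on $\bbS_\vartheta$, which is \eqref{eq:S(t)-W-estimates}. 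I expect the main obstacle to be purely bookkeeping: justifying the interchange of $\partial^\alpha$ with the Bochner integral, and verifying that the $\rho=1/|z|$ balance reproduces exactly the exponent $|\alpha|/2$ — this is precisely where the fractional resolvent gain $1-|\alpha|/2$ is consumed.

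Finally, holomorphy of $z\mapsto\partial^\alpha S(z)$ follows from the same representation: for fixed $\lambda\in\Gamma$ the integrand is entire in $z$, and the bounds above show the integral converges locally uniformly on compact subsets of $\bbS_\vartheta$, so differentiation under the integral sign produces a holomorphic $\LL(W^{m,p}_\delta)$-valued map on $\bbS_\vartheta$. Since $\vartheta<\pi/2$ is arbitrary and $\bbS_{\pi/2}=\bigcup_{\vartheta<\pi/2}\bbS_\vartheta$, the map $z\mapsto\partial^\alpha S(z)$ is holomorphic on all of $\bbS_{\pi/2}$. Equivalently, one may observe that $z\mapsto S(z)$ is holomorphic into $\LL\big(W^{m,p}_\delta,\widetilde W^{m+2,p}_\delta\big)$ by analyticity of the semigroup together with the equivalence of the graph norm of $\Delta|_{W^{m,p}_\delta}$ with the $\widetilde W^{m+2,p}_\delta$-norm (Theorem \ref{th:sectorial_W}(i) and the open mapping theorem), and then compose with the bounded operator $\partial^\alpha$.
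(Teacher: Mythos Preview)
Your argument is correct and follows essentially the same Dunford--Taylor contour scheme as the paper: represent $S(z)$ by a Cauchy integral over a shifted sectorial contour, insert the resolvent bound $\|\partial^\alpha(\lambda-\Delta)^{-1}\|\le C/|\lambda-\omega|^{1-|\alpha|/2}$ from Theorem~\ref{th:sectorial_W}(ii), and rescale. Your scaling choice $\rho=1/|z|$ is equivalent to the paper's change of variable $\zeta=t\lambda$, and your justification for commuting $\partial^\alpha$ with the Bochner integral (via boundedness of $\partial^\alpha:\widetilde W^{m+2,p}_\delta\to W^{m,p}_\delta$ and absolute integrability in the stronger operator norm) is a clean substitute for the paper's separate verification that $\lambda\mapsto\partial^\alpha(\lambda-\Delta)^{-1}$ is norm-holomorphic via density of $C_c^\infty$ and \cite[Theorem~3.12, Ch.~III]{Kato}. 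Your alternative route to holomorphy in the last paragraph---analyticity of $S(z)$ into $\widetilde W^{m+2,p}_\delta$ followed by composition with $\partial^\alpha$---is in fact tidier than what the paper does.
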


\begin{Rem}
In fact, one can prove that the exponential factor $e^{\omega|z|}$ in the estimate \eqref{eq:S(t)-W-estimates} can be replaced
by a polynomial factor of the form $(1+|z|)^\mu$ for some $\mu\in\R$ and a constant $C>0$ independent of the choice of $\omega>0$. 
Since we will not need this type of estimate we will not prove it here.
\end{Rem}

\begin{proof}[Proof of Corollary \ref{coro:S(t)-estimatesW}]
Assume that $1<p<\infty$, $m\ge 0$, and $\delta\in \R$.
Take $\omega>0$, $\vartheta\in(0,\pi/2)$, and a multi-index $\alpha$ such that $|\alpha|\le 2$.
We will first show that the map 
\begin{equation}\label{eq:p-resolventW}
\lambda\mapsto\partial^\alpha(\lambda-\Delta)^{-1},\quad\Sigma_{\omega,\pi/2-\vartheta}\to\LL(W^{m,p}_\delta),
\end{equation}
is holomorphic. In order to see this take $\varphi\in C^\infty_c$ and note that
$\partial^\alpha(\lambda-\Delta)^{-1}\varphi=(\lambda-\Delta)^{-1}\partial^\alpha\varphi$.
This implies that the map $\lambda\mapsto\partial^\alpha(\lambda-\Delta)^{-1}\varphi$, 
$\Sigma_{\omega,\pi/2-\vartheta}\to W^{m,p}_\delta$, is holomorphic by Theorem \ref{th:sectorial_W} (i) 
and the properties of the resolvent. Hence, for any functional $f\in\big(W^{m,p}_\delta\big)'$ the map 
\[
\lambda\mapsto\big\langle f,\partial^\alpha(\lambda-\Delta)^{-1}\varphi\big\rangle,\quad
\Sigma_{\omega,\pi/2-\vartheta}\to\C,
\]
is holomorphic. Since by Theorem  \ref{th:sectorial_W} (ii) the map \eqref{eq:p-resolventW} is locally bounded and since
$C^\infty_b$ is dense in $W^{m,p}_\delta$, we then conclude from \cite[Theorem 3.12, Ch. III]{Kato} that 
\eqref{eq:p-resolventW} is holomorphic.

Now, assume that $t>0$ and set 
\begin{equation}\label{eq:eta}
\eta:=\pi-\vartheta'\quad\text{\rm  for some}\quad\vartheta'\in(\vartheta,\pi/2). 
\end{equation}
For a given $\rho>0$ consider the contour 
\[
\Gamma_{\rho,\eta}:=\big\{r e^{-i\eta}\,\big|-\infty\le r\le \rho\big\}\sqcup
\big\{\rho e^{i\alpha}\,\big|\,-\eta\le\alpha\le\eta\big\}\sqcup
\big\{r e^{i\eta}\,\big|\rho\le r\le\infty\big\}
\]
and the integral
\begin{equation}\label{eq:riesz-exponent1}
\frac{1}{2\pi i}\int_{\omega+\Gamma_{\rho,\eta}}e^{t\lambda}\,\big[\partial^\alpha(\lambda-\Delta)^{-1}\big]\,d\lambda=
\frac{e^{\omega t}}{2\pi i}\int_{\Gamma_{\rho,\eta}}e^{t\lambda}\,\big[\partial^\alpha\big((\omega+\lambda)-\Delta\big)^{-1}\big]\,d\lambda\,.
\end{equation}
It follows easily from Theorem \ref{th:sectorial_W} (ii) and the fact that $\eta\in(\pi/2,\pi)$ that the integral on the right side of 
\eqref{eq:riesz-exponent1} converges as an improper Riemann integral in $\LL(W^{m,p}_\delta)$. In particular, we see that
for $t>0$,
\begin{equation}\label{eq:pS-integral}
\partial^\alpha S(t)=
\frac{e^{\omega t}}{2\pi i}\int_{\Gamma_{\rho,\eta}}e^{t\lambda}\,\big[\partial^\alpha\big((\omega+\lambda)-\Delta\big)^{-1}\big]\,d\lambda
\end{equation}
where 
\[
S(t)\equiv T(t):=\frac{1}{2\pi i}\int_{\omega+\Gamma_{\rho,\eta}}e^{t\lambda}\,(\lambda-\Delta)^{-1}\,d\lambda
\]
is the analytic semigroup generated by the sectorial operator $\Delta|_{W^{m,p}_\delta}$ (cf. the proof of  Corollary \ref{coro:S(t)-estimatesW}).
Passing to the variable $\zeta:=t\lambda$ in the integral on the right side of \eqref{eq:riesz-exponent1} we obtain
\begin{equation}\label{eq:riesz-exponent2}
\frac{e^{\omega t}}{2\pi i}\int_{\Gamma_{t\rho,\eta}}e^\zeta\,\big[\partial^\alpha\big((\omega+(\zeta/t))-\Delta\big)^{-1}\big]\,\frac{d\zeta}{t}
=\frac{e^{\omega t}}{2\pi i}\int_{\Gamma_{\rho,\eta}}e^\zeta\,\big[\partial^\alpha\big((\omega+(\zeta/t))-\Delta\big)^{-1}\big]\,\frac{d\zeta}{t}
\end{equation}
where we used the analyticity of the integrand to replace the contour $\Gamma_{t\rho,\eta}$ with $\Gamma_{\rho,\eta}$.
One then obtains from Theorem \ref{th:sectorial_W} (ii) that
\begin{align}
\Big\|\int_{\Gamma_{\rho,\eta}}\!\!\!\!\!e^\zeta\,\big[\partial^\alpha\big((\omega+(\zeta/t))-\Delta\big)^{-1}\big]\,d\zeta\Big\|_{\LL(W^{m,p}_\delta)}\!\!\!\!
&\le\int_{\Gamma_{\rho,\eta}}|e^\zeta|\Big\|\partial^\alpha\big((\omega+(\zeta/t))-\Delta\big)^{-1}\Big\|_{\LL(W^{m,p}_\delta)}\,d\zeta\nonumber\\
&\le C t^{|\alpha|/2}\Big(2\int_\rho^\infty e^{r\cos\eta}\,\frac{dr}{r}+\frac{1}{\rho}\int_{-\eta}^\eta e^{\rho\cos\alpha}\,d\alpha\Big)\label{eq:riesz-exponent3}
\end{align}
where $C\equiv C_{\omega,\vartheta}>0$ is coming from the estimate in Theorem \ref{th:sectorial_W} (ii)
and the integrals on the right side converge. It then follows from \eqref{eq:riesz-exponent1}, \eqref{eq:pS-integral}, \eqref{eq:riesz-exponent2},
and \eqref{eq:riesz-exponent3}, that there exists $C\equiv C_{\omega,\theta}$ such that
\[
\big\|\partial^\alpha S(t)\big\|_{\LL(W^{m,p}_\delta)}\le
C e^{\omega t}/t^{\frac{|\alpha|}{2}}
\]
for any $t>0$.
Now, we replace the variable $t$ on the right side of \eqref{eq:riesz-exponent2} by $z$ and allow $z\in\bbS_\vartheta$. 
It follows from \eqref{eq:eta} that $\omega+\zeta/z\in\Sigma_{\omega,\vartheta'-\vartheta}$ for $\zeta\in\Gamma_{\rho,\eta}$.
This allows us to apply Theorem \ref{th:sectorial_W} (ii) and conclude as in \eqref{eq:riesz-exponent3} that the estimate
\eqref{eq:S(t)-W-estimates} holds for any $z\in\bbS_\vartheta$. The argument also imply that one can differentiate with respect
to the complex parameter $z\in\bbS_\vartheta$ under the sign of the integral to conclude that
\[
z\mapsto\partial^\alpha S(z),\quad\bbS_\vartheta\to\LL(W^{m,p}_\delta),
\]
is holomorphic. Since $\vartheta\in(0,\pi/2)$ can be chosen arbitrarily close to $\pi/2$ we conclude the proof of the corollary.
\end{proof}

By combining Corollary \ref{coro:S(t)-estimatesW} with Lemma \ref{lem:equivalent_W-norms} we obtain

\begin{Coro}\label{coro:S(t)-estimatesW*}
Assume that $1<p<\infty$, $m\ge 0$, and $\delta\in\R$.
Then, for any $\omega>0$ and for any $\vartheta\in(0,\pi/2)$ there exists $C\equiv C_{\omega,\vartheta}>0$ such that  
for any $\tau\in\{0,1,2\}$,
\begin{equation}\label{eq:S(t)-W-estimates*}
\big\|S(z)\big\|_{\LL(W^{m,p}_{\delta+\tau},W^{m+\tau,p}_\delta)}\le
C\,e^{\omega|z|}\max\big(1,1/|z|^{\frac{\tau}{2}}\big)
\end{equation}
for any $z\in\bbS_\vartheta$. Moreover, the map $z\mapsto S(z)$, 
$\bbS_{\pi/2}\to\LL(W^{m,p}_{\delta+\tau},W^{m+\tau,p}_\delta)$, is holomorphic.
\end{Coro}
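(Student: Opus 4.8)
The plan is to deduce both the operator bound and the holomorphy from the single-weight estimates of Corollary \ref{coro:S(t)-estimatesW} by passing through the equivalent description of the $W^{m+\tau,p}_\delta$-norm furnished by Lemma \ref{lem:equivalent_W-norms}. Concretely, I would use that for $g\in W^{m+\tau,p}_\delta$ the norm $\|g\|_{W^{m+\tau,p}_\delta}$ is equivalent to $\sum_{|\alpha|\le\tau}\|\partial^\alpha g\|_{W^{m,p}_{\delta+|\alpha|}}$, so that controlling $S(z)$ as a map $W^{m,p}_{\delta+\tau}\to W^{m+\tau,p}_\delta$ reduces to controlling each derivative $\partial^\alpha S(z)$ with $|\alpha|\le\tau$ as a map into a single-weight space $W^{m,p}_{\delta+|\alpha|}$, which is exactly the type of object estimated in Corollary \ref{coro:S(t)-estimatesW}.

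For the estimate, fix $\omega>0$ and $\vartheta\in(0,\pi/2)$ and let $f\in W^{m,p}_{\delta+\tau}$. For each multi-index $\alpha$ with $|\alpha|\le\tau$ I would apply Corollary \ref{coro:S(t)-estimatesW} with weight $\delta+\tau$ to get $\|\partial^\alpha S(z)f\|_{W^{m,p}_{\delta+\tau}}\le C e^{\omega|z|}|z|^{-|\alpha|/2}\|f\|_{W^{m,p}_{\delta+\tau}}$, and then pass to the coarser weight via the trivial inequality $\|\cdot\|_{W^{m,p}_{\delta+|\alpha|}}\le\|\cdot\|_{W^{m,p}_{\delta+\tau}}$, valid since $|\alpha|\le\tau$ and $\x\ge 1$. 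Summing over $|\alpha|\le\tau$ and invoking the norm equivalence yields
\[
\|S(z)f\|_{W^{m+\tau,p}_\delta}\le C e^{\omega|z|}\Big(\sum_{|\alpha|\le\tau}|z|^{-|\alpha|/2}\Big)\|f\|_{W^{m,p}_{\delta+\tau}}.
\]
The elementary observation that $\sum_{|\alpha|\le\tau}|z|^{-|\alpha|/2}\le C\max(1,|z|^{-\tau/2})$ for $\tau\in\{0,1,2\}$ (the lowest-order term dominates when $|z|\ge 1$ and the top-order term when $|z|<1$) then gives \eqref{eq:S(t)-W-estimates*}.

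For the holomorphy I would argue exactly as in the proof of Corollary \ref{coro:S(t)-estimatesW}. The displayed estimate already shows that $z\mapsto S(z)$ is locally bounded in $\LL(W^{m,p}_{\delta+\tau},W^{m+\tau,p}_\delta)$ on $\bbS_{\pi/2}$. For $f\in C_c^\infty$ and each $|\alpha|\le\tau$, reading Corollary \ref{coro:S(t)-estimatesW} as holomorphy of $z\mapsto\partial^\alpha S(z)$ into $\LL(W^{m,p}_{\delta+|\alpha|})$ shows that $z\mapsto\partial^\alpha S(z)f$ is holomorphic into $W^{m,p}_{\delta+|\alpha|}$; since Lemma \ref{lem:equivalent_W-norms} realizes $W^{m+\tau,p}_\delta$ as (isomorphic to) a closed subspace of $\prod_{|\alpha|\le\tau}W^{m,p}_{\delta+|\alpha|}$ via $g\mapsto(\partial^\alpha g)$, this transfers to holomorphy of $z\mapsto S(z)f$ into $W^{m+\tau,p}_\delta$. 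As $C_c^\infty$ is dense in $W^{m,p}_{\delta+\tau}$, a locally bounded and densely weakly-holomorphic operator family is holomorphic by \cite[Theorem 3.12, Ch. III]{Kato}, which finishes the argument.

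The routine part is the estimate; the only point demanding care is the holomorphy, where one must check that the pointwise holomorphy of the components $\partial^\alpha S(z)f$ genuinely transfers, through Lemma \ref{lem:equivalent_W-norms}, to holomorphy in the finer target space $W^{m+\tau,p}_\delta$ rather than merely in each single-weight factor. I expect this to be the main (though still mild) obstacle, and it is handled by the density/local-boundedness criterion rather than by any new analytic input.
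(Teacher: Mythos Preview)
Your proposal is correct and follows exactly the route the paper indicates: combine the single-weight estimates of Corollary~\ref{coro:S(t)-estimatesW} with the norm characterization of Lemma~\ref{lem:equivalent_W-norms}. The paper gives no further details beyond that sentence, and your fill-in is precisely what is intended; one very minor simplification is that the density step for holomorphy is not strictly needed, since for every $f\in W^{m,p}_{\delta+\tau}\subseteq W^{m,p}_{\delta+|\alpha|}$ Corollary~\ref{coro:S(t)-estimatesW} already gives holomorphy of each component $z\mapsto\partial^\alpha S(z)f$ in $W^{m,p}_{\delta+|\alpha|}$, and the norm equivalence then yields strong (hence norm) holomorphy directly.
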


\section{The heat semigroup on asymptotic spaces}\label{sec:A-spaces}
Assume that $1<p<\infty$ and let $m\ge 0$ and $n,N\ge 0$ with $0\le n\le N$ be integers.
In this section we show that $\{S(t)\}_{t\ge 0}$ extends to an analytic semigroup with angle $\vartheta=\pi/2$
on $\A^{m,p}_{n,N;0}$. For simplicity of notation we will denote in this section the real spaces and their complexifications
by the same symbols.

Recall that by definition $v\in\A^{m,p}_{n,N;0}$ is represented in a unique way as $v=a+f$ where $a$ is its asymptotic part
and $f$ is the reminder (see \eqref{AWlog-expansion1}-\eqref{AW-expansion3}). Consider the Banach space
\begin{eqnarray}\label{def:D(Lap|A)}
{\widetilde\A}^{m+2,p}_{n,N;0}:=
\big\{v\in\Sz'\,\big|\,\partial^\alpha v\in\A^{m,p}_{n,N;0}, |\alpha|\le 2\big\}
\end{eqnarray}
equipped with the norm $\|v\|_{{\widetilde\A}^{m+2,p}_{n,N;0}}:=\sum_{|\alpha|\le 2}\|\partial^\alpha v\|_{\A^{m,p}_{n,N;0}}$.
Note that $\A^{m+2,p}_{n,N;0}$ is a dense subspace in ${\widetilde\A}^{m,p}_{n,N;0}$ and by \eqref{def:D(Lap|A)} we have
\[
\A^{m+2,p}_{n,N;0}\subseteq{\widetilde\A}^{m+2,p}_{n,N;0}\subseteq \A^{m,p}_{n,N;0}
\] 
with bounded inclusion maps. We will first prove the following theorem.

\begin{Th}\label{th:sectorial_A}
Assume that $1<p<\infty$ and let $m\ge 0$ and $0\le n\le N$ be integers. Then, we have
\begin{itemize}
\item[(i)] For any $\lambda\in\C\setminus(-\infty,0]$ the map 
\begin{equation*}
(\lambda-\Delta) : {\widetilde\A}^{m+2,p}_{n,N;0}\to\A^{m,p}_{n,N;0}
\end{equation*}
is an isomorphism of Banach spaces.
In particular, the (distributional) Laplacian $\Delta$, when considered as an unbounded operator $\Delta|_{\A^{m,p}_{n,N;0}}$ in 
$\A^{m,p}_{n,N;0}$ with domain $D(\Delta|_{\A^{m,p}_{n,N;0}})={\widetilde\A}^{m+2,p}_{n,N;0}$, is closed and
its spectrum is contained in $(-\infty,0]$. 
\item[(ii)] For any $\omega>0$ and for any $0<\epsilon<\pi$ there exists a 
positive constant $C\equiv C_{\omega,\epsilon,m}>0$ such that
\[
\big\|\partial^\alpha(\lambda-\Delta)^{-1}\big\|_{\LL(\A^{m,p}_{n,N;0})}\le C/|\lambda-\omega|^{1-\frac{|\alpha|}{2}}
\]
for any $\lambda\in\Sigma_{\omega,\epsilon}$ and any multi-index $\alpha\in\Z_{\ge 0}^d$ with $|\alpha|\le 2$.
\end{itemize}
\end{Th}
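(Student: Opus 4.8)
The plan is to transfer the sectoriality result for weighted Sobolev spaces (Theorem \ref{th:sectorial_W}) to the asymptotic spaces by exploiting the decomposition $v=a+f$ into asymptotic part and remainder, and crucially, the fact that the Laplacian interacts with this decomposition in a triangular (upper-triangular in the order of decay) fashion. The key structural observation is that for a homogeneous-type term $\chi(r)\,\frac{(\log r)^j}{r^k}\,a^j_k(\theta)$, applying $\Delta$ produces a term of the same type but of faster decay (order $r^{-k-2}$, up to $\log$ factors) plus a compactly-supported-derivative error coming from $\chi$, which lands in $W^{m,p}_{\gamma_N}$. So on the finite-dimensional-in-$k$ scale of asymptotic coefficients, $\Delta$ acts by shifting down the index $k\mapsto k+2$; the coefficient of the top orders $k=N,N-1$ is simply pushed into the remainder. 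I would make this precise by first computing, in spherical coordinates, the explicit action
\[
\Delta\Big(\chi(r)\frac{(\log r)^j}{r^k}a^j_k(\theta)\Big)
=\chi(r)\frac{P_{j,k}(\log r)}{r^{k+2}}\,b(\theta)+g,\qquad g\in W^{m,p}_{\gamma_N},
\]
where $P_{j,k}$ is a polynomial of degree $\le j$ and $b$ involves $a^j_k$ and its spherical Laplacian; this uses the formula $\Delta=\partial_r^2+\frac{d-1}{r}\partial_r+\frac{1}{r^2}\Delta_{\s^{d-1}}$ and the Sobolev regularity $a^j_k\in H^{m+1+N-k,p}(\s^{d-1})$, which is chosen exactly so that the spherical Laplacian produces a coefficient of the correct regularity at the shifted index.

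With this triangular structure in hand, I would solve $(\lambda-\Delta)v=w$ for $w=b+h\in\A^{m,p}_{n,N;0}$ by determining the asymptotic coefficients of $v$ top-down (from fastest to slowest decay), and then solving for the remainder using Theorem \ref{th:sectorial_W}. Concretely, matching the asymptotic expansions of $(\lambda-\Delta)v$ and $w$ order by order in $r$: the highest orders force linear algebraic relations $\lambda\,a^j_k=(\text{coefficient of }b)+(\text{contribution from }\Delta\text{ applied to slower terms})$, which are uniquely solvable for $a^j_k$ precisely because $\lambda\ne 0$ (here $\lambda\in\Sigma_{\omega,\epsilon}$ with $\omega>0$ guarantees $\lambda\ne 0$). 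Since $\Delta$ only couples a given order to strictly slower orders, this triangular system is solved without inverting any infinite or ill-conditioned operator — each step is just division by $\lambda$. Once the asymptotic part $a$ of $v$ is fixed, the remainder $f$ must satisfy $(\lambda-\Delta)f=h-\big[(\lambda-\Delta)a-(\text{asymptotic part of }w)\big]$, where the right-hand side lies in $W^{m,p}_{\gamma_N}$ by construction; Theorem \ref{th:sectorial_W} then yields a unique $f\in{\widetilde W}^{m+2,p}_{\gamma_N}$, giving $v\in{\widetilde\A}^{m+2,p}_{n,N;0}$. Injectivity follows since $\lambda-\Delta$ is injective on all of $\Sz'$, and surjectivity plus the open mapping theorem give the isomorphism claim in (i).

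For the resolvent estimate (ii), I would track the $\lambda$-dependence through both pieces of the construction. The remainder part inherits the bound $\|\partial^\alpha(\lambda-\Delta)^{-1}h\|_{W^{m,p}_{\gamma_N}}\le C|\lambda-\omega|^{-1+|\alpha|/2}\|h\|_{W^{m,p}_{\gamma_N}}$ directly from Theorem \ref{th:sectorial_W}(ii), after replacing $|\lambda|$ by $|\lambda-\omega|$ as in the proof of that theorem. For the asymptotic coefficients, since they are obtained by solving the triangular system via division by $\lambda$, each $a^j_k(v)$ is bounded by $C|\lambda|^{-1}$ times norms of the data coefficients, which is consistent with the $|\alpha|=0$ case of the desired estimate; the point is that dividing by $\lambda$ gains a factor $|\lambda|^{-1}\le C|\lambda-\omega|^{-1}$ on $\Sigma_{\omega,\epsilon}$. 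One subtlety is that applying $\partial^\alpha$ to the asymptotic part $a$ again yields an asymptotic part of the same form (derivatives of $\chi(r)(\log r)^j/r^k$ terms stay within the scale, with $\chi$-errors in the remainder), so the $|\alpha|\le 2$ estimate for the full $\A$-norm follows by combining the coefficient bounds with the remainder bound from Theorem \ref{th:sectorial_W}(ii). The main obstacle I anticipate is bookkeeping: verifying that the $\chi$-generated error terms (from differentiating the cut-off) genuinely land in $W^{m,p}_{\gamma_N}$ with the correct $\lambda$-uniform bounds, and that the regularity indices $m+1+N-k$ on the sphere are consistent under the index shift $k\mapsto k+2$ so that the solved coefficients actually lie in the spaces demanded by the definition \eqref{def:AW;-norm} of $\A^{m+2,p}_{n,N;0}$. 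This is where the precise choice of spherical regularity in the definition of the asymptotic space must be checked to close the argument.
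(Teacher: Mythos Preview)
Your approach is essentially the paper's: both exploit the triangular action of $\Delta$ on the asymptotic scale (shifting $r^{-k}\mapsto r^{-k-2}$ via $\Delta=\partial_r^2+\tfrac{d-1}{r}\partial_r+\tfrac{1}{r^2}\Delta_{\s^{d-1}}$) together with Theorem \ref{th:sectorial_W} on the remainder. The paper packages the computation as Lemma \ref{lem:resolvent_on_asymptotics}, a finite Neumann-type expansion of $(\lambda-\Delta)^{-1}$ applied to a single asymptotic term $\chi(r)\,a(\theta)(\log r)^\ell/r^k$, iterated until the decay order exceeds $N$ and the tail drops into $W^{m,p}_{\gamma_N}$; your coefficient-matching is the same linear system read in the other direction. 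The paper's Proposition \ref{prop:resovent_estimates_A} then assembles these term-by-term bounds into the resolvent estimate on $\A^{m,p}_{n,N;0}$, exactly as you outline.

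One slip worth fixing: the triangular solve must run from \emph{slowest} to fastest decay (smallest $k$ first), since your own equation $\lambda\,a^j_k=b^j_k+(\text{contribution from }\Delta\text{ applied to }a^{\,\cdot}_{k-2})$ shows that $a_k$ depends on $a_{k-2}$; the stated direction ``from fastest to slowest'' is reversed. This is only a wording issue --- the structure you identify is correct and matches the paper's iteration \eqref{eq:j_N-general}.
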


Let $\chi\in C_c^\infty$, $\chi\equiv 1$ on $[-1,1]$, $\chi\equiv 1$ on $\R\setminus(-2,2)$, $\chi\ge 0$, be
the cut-off function appearing in the definition of the asymptotic space $\A^{m,p}_{n,N;0}$.
It follows from the definition of $\A^{m,p}_{n,N;0}$ that the asymptotic term
\begin{equation}\label{eq:asymptotic_term}
\chi(r) a(\theta)(\log r)^\ell/r^k
\end{equation}
with $n\le k\le N$ and $0\le\ell\le k$ belongs to $\A^{m,p}_{n,N;0}$ if and only if $a\in H^{m+1+(N-k),p}(\s^{d-1})$.
Assume that $\lambda\in\C\setminus(-\infty,0]$ and recall that $\lambda-\Delta : \Sz'\to\Sz'$ is an isomorphism.
We will first study the action of $(\lambda-\Delta)^{-1} : \Sz'\to\Sz'$ on the asymptotic term \eqref{eq:asymptotic_term}.
We start with a preparation.
For any $a\in C^\infty(\s^{d-1})$ denote $\hat{a}(x):=a(x/|x|)$, $\hat{a}\in C^\infty\big(\R^d\setminus\{0\}\big)$, and for any given $1\le j\le d$ 
consider the following operator on the unit sphere $\s^{d-1}\equiv\big\{x\in\R^d\,\big|\,|x|=1\big\}$ in $\R^d$,
\begin{equation}\label{eq:d-hat}
C^\infty(\s^{d-1})\to C^\infty(\s^{d-1}),\quad{\hat\partial}_j : a\mapsto\big(\partial_j\hat{a}\big)\big|_{x=\theta\in \s^{d-1}}.
\end{equation}
One easily sees (e.g. in polar coordinates) that ${\hat\partial}_j$ is a differential operator on $\s^{d-1}$ of order one with $C^\infty$-smooth coefficients.
In geometrical terms, for any given $\theta\in \s^{d-1}$ the operator ${\hat\partial}_j$ at $\theta$ corresponds to the orthogonal projection of 
the vector field $\partial_j\in T_\theta\R^d$ onto the the tangent space $T_\theta \s^{d-1}$ to the sphere. One easily checks that
\begin{equation}\label{eq:d-hat_relation}
\partial_j\big[a\big(x/|x|\big)\big]=\frac{1}{r}\big({\hat\partial}_ja\big)\big(x/|x|\big),\quad \forall x\in\R^d\setminus\{0\}.
\end{equation}
We have the following lemma.

\begin{Lem}\label{lem:resolvent_on_asymptotics}
Take $0\le n\le k\le N$, $0\le\ell\le k$ and assume that $a\in H^{m+1+(N-k),p}(\s^{d-1})$ where $m>1+\frac{d}{p}$. 
Then for any $\lambda\in\C\setminus(-\infty,0]$ and for any given integer $j_N\ge 0$ such that $k+2j_N\le N$ one has
\begin{eqnarray}\label{eq:j_N-general}
\lambda\,\big(\lambda-\Delta\big)^{-1}\Big(\chi(r)\,\frac{a(\theta)(\log r)^\ell}{r^k}\Big)&=&
\sum_{0\le j\le j_N}\frac{1}{\lambda^j}\chi(r)\,\frac{\sum_{0\le l\le\ell}(\log r)^l\big(\mathcal{P}_{2j}^l a\big)(\theta)}{r^{k+2j}}\nonumber\\
&+&\frac{1}{\lambda^{j_N}}\!\big(\lambda-\Delta\big)^{-1}\!\!
\left(\chi(r)\,\frac{\sum_{0\le l\le\ell}(\log r)^l\big(\mathcal{P}_{2j_N+2}^l a\big)(\theta)}{r^{k+2j_N+2}}\right)\\
&+&\big(\lambda-\Delta\big)^{-1}\big(\mathcal{R}^\ell_{2j_N+1}a\big)\nonumber
\end{eqnarray}
where $\mathcal{P}_{2j}^l$ is a differential operator of order $\,\le 2j$ on the sphere $\s^{d-1}$ with $C^\infty$-smooth coefficients 
independent of $\lambda$ and
\begin{equation}\label{eq:R-operator}
\big(\mathcal{R}^\ell_{2j_N+1}a\big)(r,\theta):=\sum_{0\le j\le j_N}\frac{1}{\lambda^j}A^\ell_{2j_N+1,j}(r)\big(B^\ell_{2j_N+1,j}a\big)(\theta)
\end{equation} 
where $A^\ell_{2j_N+1,j}\in C_c^\infty(\R)$ has support in the interval $[1,2]$ and $B^\ell_{2j_N+1,j}$ is a differential operator of order 
$\le 2j_N+1$ on $\s^{d-1}$ with $C^\infty$-smooth coefficients independent of $\lambda$. 
The differential operators $\mathcal{P}^l_{2j_N+2}$ and $\mathcal{R}^\ell_{2j_N+1}$ appearing above depend on the choice of $n\le k\le N$ 
and $0\le\ell\le k$.
\end{Lem}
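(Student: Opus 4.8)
The plan is to derive \eqref{eq:j_N-general} by iterating the elementary resolvent identity
\[
\lambda(\lambda-\Delta)^{-1}=I+(\lambda-\Delta)^{-1}\Delta\qquad\text{on }\Sz',
\]
which holds for $\lambda\in\C\setminus(-\infty,0]$ since $\lambda-\Delta:\Sz'\to\Sz'$ is then an isomorphism, and peeling off exactly one asymptotic order at each step. The engine of the iteration is a single computation: the effect of $\Delta$ on a model asymptotic term. First I would compute $\Delta$ applied to $(\log r)^\ell a(\theta)/r^k$ away from the origin. Writing the Laplacian in polar coordinates as $\Delta=\partial_r^2+\frac{d-1}{r}\partial_r+\frac{1}{r^2}\Delta_{\s^{d-1}}$ and using that $\partial_r$ lowers the power of $\log r$, one obtains
\[
\Delta\Big(\frac{(\log r)^\ell a(\theta)}{r^k}\Big)=\frac{1}{r^{k+2}}\sum_{0\le l\le\ell}(\log r)^l\,(D^\ell_l a)(\theta),
\]
where each $D^\ell_l$ is a $\lambda$-independent differential operator of order $\le 2$ on $\s^{d-1}$ with $C^\infty$ coefficients; explicitly $D^\ell_\ell=\Delta_{\s^{d-1}}+k(k+2-d)\,\id$, with the lower indices $l$ carrying constants produced by the radial derivatives. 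The three structural features to record are that the power of $r$ in the denominator increases by exactly $2$, that the powers of $\log r$ never increase, and that the action on $\theta$ is by a sphere operator of order $\le 2$.

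Next I would incorporate the cutoff. By Leibniz, $\Delta(\chi(r) g)=\chi(r)\Delta g+2\nabla\chi\cdot\nabla g+(\Delta\chi)g$; since $\nabla\chi$ is radial the middle term equals $2\chi'(r)\,\partial_r g$, and because $\partial_r$ annihilates functions of $\theta$ alone the cutoff terms introduce \emph{no} new angular derivatives. Together with \eqref{eq:d-hat_relation} this yields
\[
\Delta\Big(\chi(r)\,\frac{(\log r)^\ell a(\theta)}{r^k}\Big)=\chi(r)\,\frac{1}{r^{k+2}}\sum_{0\le l\le\ell}(\log r)^l\,(D^\ell_l a)(\theta)+R(a),
\]
where $R(a)$ is a finite sum of terms $A(r)\,(B a)(\theta)$ with each $A\in C_c^\infty$ supported in $[1,2]$ (the support of $\chi',\chi''$) and each $B$ a sphere operator of order bounded by the angular order of the input.

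I would then run the induction on $j$. Set $u^{(0)}:=\chi(r)(\log r)^\ell a/r^k$, which fixes $\mathcal{P}^\ell_0=\id$ and $\mathcal{P}^l_0=0$ for $l<\ell$, and define recursively $\Delta u^{(j)}=u^{(j+1)}+R^{(j)}$, where $u^{(j+1)}=\chi(r)\frac{\sum_{l}(\log r)^l(\mathcal{P}^l_{2j+2}a)(\theta)}{r^{k+2j+2}}$ with $\mathcal{P}^l_{2j+2}$ a composition of the order-$\le2$ operators $D$ (hence $\lambda$-independent and of order $\le 2(j+1)$, while $\mathcal{P}^l_{2j}$ has order $\le 2j$), and $R^{(j)}$ of the cutoff form above with sphere order $\le 2j\le 2j_N+1$. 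Applying the resolvent identity to $u^{(j)}$ and substituting $\Delta u^{(j)}=u^{(j+1)}+R^{(j)}$ gives
\[
\lambda(\lambda-\Delta)^{-1}u^{(j)}=u^{(j)}+(\lambda-\Delta)^{-1}u^{(j+1)}+(\lambda-\Delta)^{-1}R^{(j)};
\]
replacing $(\lambda-\Delta)^{-1}u^{(j+1)}$ by $\frac{1}{\lambda}\,\lambda(\lambda-\Delta)^{-1}u^{(j+1)}$ and telescoping from $j=0$ up to $j=j_N$ produces exactly \eqref{eq:j_N-general}: the geometric accumulation of the $u^{(j)}$ yields the first line, the single remaining undifferentiated resolvent term $\frac{1}{\lambda^{j_N}}(\lambda-\Delta)^{-1}u^{(j_N+1)}$ yields the second line, and collecting the cutoff remainders as $\mathcal{R}^\ell_{2j_N+1}a:=\sum_{0\le j\le j_N}\lambda^{-j}R^{(j)}(a)$ yields \eqref{eq:R-operator}. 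The hypothesis $k+2j_N\le N$ guarantees that every peeled term $u^{(j)}$ with $j\le j_N$ is a genuine asymptotic term of $\A^{m,p}_{n,N;0}$: since $\mathcal{P}^l_{2j}$ has order $\le 2j$, it carries $a\in H^{m+1+(N-k),p}(\s^{d-1})$ into $H^{m+1+(N-(k+2j)),p}(\s^{d-1})$, which is precisely the regularity demanded at order $k+2j$ by the definition \eqref{def:AW;-norm}.

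I expect the main obstacle to be bookkeeping rather than conceptual: one must track simultaneously the increase of the $r$-power by $2$, the non-increase of the $\log r$-power, the growth of the angular differential order by $2$ per step, and the compactly supported structure of the cutoff remainders, and then verify that the telescoped powers of $1/\lambda$ align with \eqref{eq:j_N-general} and \eqref{eq:R-operator}. The only genuinely substantive points are the single-term Laplacian computation and the observation that the radial cutoff gradient $\nabla\chi$ contributes no angular derivatives; once these are in hand, the remainder of the argument is assembling the induction and invoking the regularity accounting above.
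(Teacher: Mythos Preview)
Your proposal is correct and follows essentially the same route as the paper: compute $\Delta$ on a model asymptotic term in polar coordinates, split off the cutoff commutator $2\nabla\chi\cdot\nabla g+(\Delta\chi)g$ as a compactly supported remainder, and then iterate the identity $\lambda(\lambda-\Delta)^{-1}=I+(\lambda-\Delta)^{-1}\Delta$ (equivalently, apply $(\lambda-\Delta)^{-1}$ to the computed expression for $(\lambda-\Delta)(\chi g)$) to telescope down to \eqref{eq:j_N-general}. Your observation that $\nabla\chi$ is radial, so the cutoff remainder $R^{(j)}$ carries angular order $\le 2j$ rather than $2j+1$, is a slight sharpening of the stated bound $\le 2j_N+1$ in \eqref{eq:R-operator}; the paper instead invokes \eqref{eq:d-hat_relation} directly on $\nabla\chi\cdot\nabla(\cdot)$, which naively produces an order-one sphere operator and accounts for the ``$+1$'' in the statement.
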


\begin{Rem}\label{rem:smoothness}
By Lemma  \ref{lem:properties_W} in Appendix \ref{sec:appendix_properties} for $j_N=\big[(N-k)/2\big]$ where $[\cdot]$ denotes 
the integer part of a real number,
\[
\mathcal{P}^l_{2j_N+2}a\in 
\left\{
\begin{array}{l}
H^{m-1,p}(\s^{d-1}),\quad(N-k)\,\text{-- even},\\ 
H^{m,p}(\s^{d-1}),\quad(N-k)\,\,\text{-- odd},
\end{array}
\right. 
\]
and
\[
\mathcal{R}^\ell_{2j_N+1}a\in W^{m,p}_\delta
\]
for any given weight $\delta\in\R$. In particular, we see that $\mathcal{R}^\ell_{2j_N+1}a$ belongs to the remainder space
$W^{m,p}_{\gamma_N}$ (cf. \eqref{def:gamma_N}).
\end{Rem}

\begin{proof}[Proof of Lemma \ref{lem:resolvent_on_asymptotics}]
Recall that in polar coordinates $\Delta=\partial_r^2+\frac{d-1}{r}\partial_r+\frac{1}{r^2}\Delta_S$ where 
$r\ne 0$ and $\Delta_S$ denotes the Laplace-Beltrami operator on the unit sphere $\s^{d-1}$. 
Under the assumption $r\ne 0$, this easily implies (cf. \cite[\S 3]{McOwenTopalov3}) that for $\ell\ge 2$,
\begin{equation}\label{eq:Delta(l>= 2)}
\begin{array}{l}
\Delta\Big(\frac{a(\theta) (\log r)^\ell}{r^k}\Big)
=\frac{\ell(\ell-1) a(\theta) (\log r)^{\ell-2}-\ell(2k+2-d)a(\theta) (\log r)^{\ell-1}+
\big((\Delta_S a)(\theta)+k(k+2-d) a(\theta)\big)(\log r)^\ell}
{r^{k+2}}\,,
\end{array}
\end{equation}
while for $\ell=1$ and $\ell=0$ one has respectively,
\begin{equation}\label{eq:Delta(l=1)}
\Delta\Big(\frac{a(\theta)\log r}{r^k}\Big)=
\frac{-(2k+2-d)a(\theta)+\big((\Delta_S \, a)(\theta)+k(k+2-d) a(\theta)\big)\log r}{r^{k+2}}
\end{equation}
and
\begin{equation}\label{eq:Delta(l=0)}
\Delta\Big(\frac{a(\theta)}{r^k}\Big)=\frac{(\Delta_S \,a)(\theta)+k(k+2-d) a(\theta)}{r^{k+2}}.
\end{equation}
It follows from  \eqref{eq:Delta(l>= 2)},  \eqref{eq:Delta(l=1)}, and  \eqref{eq:Delta(l=0)} that for $\ell\ge 0$,
\[
\big(\lambda-\Delta\big)\Big(\chi\,\frac{a(\theta) (\log r)^\ell}{r^k}\Big)=
\lambda\cdot\chi\,\frac{a(\theta) (\log r)^\ell}{r^k}-\sum_{0\le l\le\ell}\chi\,\frac{(\log r)^l\big(\mathcal{P}_2^l a\big)(\theta)}{r^{k+2}}
-\mathcal{R}^\ell_1a
\]
where\footnote{Note that for $k\ge d-2$, $\lambda_k'$ is the $k'\equiv k-2+d$ eigenvalue of $(-\Delta_S)$ on $\s^{d-1}$}
$\mathcal{P}^\ell_2 a=(\Delta_S+\lambda_k')a$, $\lambda_k':=k(k+2-d)$, and
\begin{equation}\label{eq:R_1}
\big(\mathcal{R}^\ell_1a\big)(r,\theta):=
2\nabla\chi\cdot\nabla\Big(\frac{a(\theta)(\log r)^\ell}{r^k}\Big)+\Delta\chi\cdot\frac{a(\theta)(\log r)^\ell}{r^k}.
\end{equation}
This implies that for $\lambda\in\C\setminus(-\infty,0]$,
\begin{eqnarray}\label{eq:j_N=0}
\lambda\,\big(\lambda-\Delta\big)^{-1}\Big(\chi\,\frac{a(\theta)(\log r)^\ell}{r^k}\Big)&=&\chi\,\frac{a(\theta) (\log r)^\ell}{r^k}+
\big(\lambda-\Delta\big)^{-1}\left(\chi\,\frac{\sum_{0\le l\le\ell}(\log r)^l\big(\mathcal{P}_2^l a\big)(\theta)}{r^{k+2}}\right)\nonumber\\
&+&\big(\lambda-\Delta\big)^{-1}\big(\mathcal{R}^\ell_1a\big)\,.
\end{eqnarray}
It follows from \eqref{eq:R_1} and \eqref{eq:d-hat_relation} that $\mathcal{R}^\ell_1a$ is of the form \eqref{eq:R-operator}. 
This proves \eqref{eq:j_N-general} in the case when $j_N=0$.
The general case of \eqref{eq:j_N-general} then follows by applying inductively \eqref{eq:j_N=0} to the second term
on the right hand side of \eqref{eq:j_N-general}.
\end{proof}

As a consequence of the results above we obtain the following important

\begin{Prop}\label{prop:resovent_estimates_A}
Assume that $1<p<\infty$ and consider the asymptotic space $\A^{m,p}_{n,N;0}$
for some $m\ge 0$ and $0\le n\le N$ integer. Then, for any given $\beta>0$, $0<\epsilon<\pi$, 
there exists a constant $C\equiv C_{\beta,\epsilon,m}>0$ such for any multi-index $\alpha\in\Z^d_{\ge 0}$, 
$|\alpha|\le 2$, 
\begin{equation}\label{eq:resolvent_A-spaces}
\big\|\partial^\alpha(\lambda-\Delta)^{-1} u\big\|_{\A^{m,p}_{n,N;0}}\le
\frac{C}{|\lambda|^{1-\frac{|\alpha|}{2}}}\big\|u\big\|_{\A^{m,p}_{n,N;0}}
\end{equation}
for any $u\in\A^{m,p}_{n,N;0}$ and $\lambda\in R_{\beta,\epsilon}$.
\end{Prop}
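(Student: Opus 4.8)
The plan is to reduce \eqref{eq:resolvent_A-spaces} to the weighted-Sobolev resolvent estimate of Proposition \ref{prop:resovent_estimates_W} by using the unique splitting $u=a+f$ of $u\in\A^{m,p}_{n,N;0}$ into its asymptotic part $a$ (see \eqref{AWlog-expansion2}) and its remainder $f\in W^{m,p}_{\gamma_N}$ (see \eqref{AW-expansion3}), together with the linearity of $\K_\lambda:=(\lambda-\Delta)^{-1}$. For each of the two pieces I must check that $\partial^\alpha\K_\lambda(\cdot)$ again decomposes as an asymptotic part with angular coefficients of the Sobolev regularity demanded by the $\A$-norm \eqref{def:AW;-norm}, plus a remainder in $W^{m,p}_{\gamma_N}$, and that the resulting $\A^{m,p}_{n,N;0}$-norm carries the factor $|\lambda|^{-(1-|\alpha|/2)}$ uniformly for $\lambda\in R_{\beta,\epsilon}$.

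First I would dispose of the remainder. Because the fundamental solution $K_\lambda$ decays exponentially at infinity (Hankel asymptotics \eqref{eq:hankel_at_infinity}), convolution with it creates no asymptotic terms out of a decaying input, so by Proposition \ref{prop:resovent_estimates_W} one has $\partial^\alpha\K_\lambda f\in W^{m,p}_{\gamma_N}$ with $\|\partial^\alpha\K_\lambda f\|_{W^{m,p}_{\gamma_N}}\le C|\lambda|^{-(1-|\alpha|/2)}\|f\|_{W^{m,p}_{\gamma_N}}$ for $|\alpha|\le2$. This is already the required bound and feeds only the remainder part of $\partial^\alpha\K_\lambda u$; it is the dominant contribution.

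Next I treat the asymptotic part term by term. For a single block $\chi(r)a^j_k(\theta)(\log r)^\ell/r^k$ I would apply Lemma \ref{lem:resolvent_on_asymptotics} with $j_N=[(N-k)/2]$, which expresses $\K_\lambda$ of the block as (i) an explicit finite sum of asymptotic terms of orders $k,k+2,\dots\le N$ with coefficients $\lambda^{-1-j}\mathcal{P}_{2j}^l a^j_k$; since $\mathcal{P}_{2j}^l$ has order $\le2j$ on $\s^{d-1}$ and $a^j_k\in H^{m+1+N-k,p}(\s^{d-1})$, these coefficients lie in $H^{m+1+N-(k+2j),p}(\s^{d-1})$, exactly the regularity required at order $k+2j$ by the order-dependent exponent in \eqref{def:AW;-norm}; plus (ii) two remainder sources, the order-$(>N)$ overflow $\lambda^{-1-j_N}\K_\lambda(g)$ and the compactly supported $\lambda^{-1}\K_\lambda(\mathcal{R}^\ell_{2j_N+1}a^j_k)$. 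By Remark \ref{rem:smoothness} the compactly supported source lies in $W^{m,p}_\delta$ for every $\delta$, and, using the normalization $\gamma_0+\frac dp<1$ from \eqref{def:gamma_N}, the overflow $g$ lies in $W^{m,p}_{\gamma_N}$ when $N-k$ is odd (its coefficient is then of class $H^{m,p}$, by the $+1$ in the definition) and, using its faster $r^{-(N+2)}$ decay, in $W^{m-1,p}_{\gamma_N+1}$ when $N-k$ is even. Applying Proposition \ref{prop:resovent_estimates_W} at the appropriate regularity and weight, and converting the output by the weight/derivative trade-off $\widetilde W^{m+2,p}_\delta=W^{m,p}_\delta\cap W^{m+1,p}_{\delta-1}\cap W^{m+2,p}_{\delta-2}$ (Lemma \ref{lem:domains}), gives the $W^{m,p}_{\gamma_N}$-membership of $\partial^\alpha\K_\lambda(\cdot)$ for $|\alpha|\le1$. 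Since every piece in (i) and (ii) carries at least one explicit factor $1/\lambda$, and $|\lambda|\ge\beta$, each is bounded by $C_\beta|\lambda|^{-(1-|\alpha|/2)}$ times a sphere-norm of $a^j_k$; summing the finitely many terms and orders, and adding the remainder contribution, yields \eqref{eq:resolvent_A-spaces}, the passage from test functions to general $u$ being by density.

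The main obstacle is the case $|\alpha|=2$ with $N-k$ even: there the overflow source is only of class $H^{m-1,p}$ on the sphere, so $\partial_i\partial_j\K_\lambda(g)$ taken in isolation loses one derivative and fails to lie in $W^{m,p}_{\gamma_N}$. The resolution is not to isolate the overflow but to use the commutation $\partial_i\partial_j\K_\lambda=R_iR_j\,\Delta\K_\lambda=R_iR_j(\lambda\K_\lambda-\Id)$, where $R_iR_j$ are second-order Riesz transforms: this expresses $\partial_i\partial_j\K_\lambda u$ through the already-controlled operator $\K_\lambda$ (the $\alpha=0$ case, whose norm is $O(1/|\lambda|)$, so the factor $\lambda$ is absorbed) and through the Calderon-Zygmund operator $R_iR_j$ applied to $u$ itself, whose high-regularity asymptotic coefficient $a^j_k\in H^{m+1+N-k,p}(\s^{d-1})$ removes the apparent loss. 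Thus the $|\alpha|=2$ estimate follows once one knows that Calderon-Zygmund operators act boundedly on $\A^{m,p}_{n,N;0}$ -- a property parallel to the weighted-Sobolev statement behind Lemma \ref{lem:schur+CZ}, and the technical heart of the argument; everything else is bookkeeping of the powers of $\lambda$.
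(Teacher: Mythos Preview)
Your overall architecture matches the paper's: split $u=a+f$, handle $f$ by Proposition~\ref{prop:resovent_estimates_W}, and handle each asymptotic block via Lemma~\ref{lem:resolvent_on_asymptotics} with $j_N=[(N-k)/2]$, the leftover landing in $W^{m,p}_{\gamma_N}$ after the $\widetilde W$ trade-off. You also correctly isolate the one genuine difficulty: for $|\alpha|=2$ and $N-k$ even the overflow coefficient is only $H^{m-1,p}$, so the naive estimate loses a derivative.

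The gap is in your proposed resolution. The identity $\partial_i\partial_j\K_\lambda=R_iR_j(\lambda\K_\lambda-\Id)$ transfers the whole problem to showing that the second Riesz transforms $R_iR_j$ act boundedly on $\A^{m,p}_{n,N;0}$. You call this ``parallel to Lemma~\ref{lem:schur+CZ}'', but that lemma is a weighted-$L^p$ statement; on the asymptotic space the situation is qualitatively different. First, for $n=0$ the leading term $a_0(\theta)$ does not decay, and the principal-value integral defining $R_iR_j$ is not a priori convergent on such inputs. Second, even when defined, singular integrals of this type applied to homogeneous functions $a(\theta)(\log r)^\ell/r^k$ can raise the log exponent by one (precisely when $a$ hits the relevant spherical-harmonic eigenspace), which would push you out of the $\ell=0$ scale $\A^{m,p}_{n,N;0}$. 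So the ``technical heart'' you defer to is not a routine extension of Lemma~\ref{lem:schur+CZ}; it is a separate structural statement that the paper neither proves nor needs.

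The paper's fix is much cheaper and stays entirely inside the machinery already built: when $N-k$ is even, simply stop the expansion of Lemma~\ref{lem:resolvent_on_asymptotics} one step earlier, taking $j_N=[(N-k)/2]-1$. The overflow term then sits at order $k+2j_N+2=N$ with spherical coefficient in $H^{m+1,p}(\s^{d-1})$, hence lies in $W^{m+1,p}_{\gamma_N-1}$. Applying $\partial^\alpha$ with $|\alpha|\le2$ now gives something in $W^{m-1,p}_{\gamma_N+1}$ (for $|\alpha|=2$) or $W^{m,p}_{\gamma_N}$ (for $|\alpha|=1$), and one more application of Proposition~\ref{prop:resovent_estimates_W} at the appropriate weight, together with the inclusion $\widetilde W^{m+1,p}_{\gamma_N+1}\subseteq W^{m,p}_{\gamma_N}$, closes the estimate with the correct power of $|\lambda|$. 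No Calder\'on--Zygmund theory on asymptotic spaces is required.
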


\begin{proof}[Proof of Proposition \ref{prop:resovent_estimates_A}]
The proposition follows from Theorem \ref{th:sectorial_W}, Lemma \ref{lem:resolvent_on_asymptotics}, and
Lemma \ref{lem:properties_W}. More specifically, let us first consider the case when $\alpha=0$. Take
an arbitrary $u\in\A^{m,p}_{n,N;0}$ and let
\[
\chi(r) a(\theta)(\log r)^\ell/r^k,\quad a\in H^{m+1+(N-k),p}(\s^{d-1}),
\]
with $n\le k\le N$ and $0\le\ell\le k$ be an asymptotic term in $u$.
Then by Lemma \ref{lem:resolvent_on_asymptotics} the expansion \eqref{eq:j_N-general} holds with $j_N=\big[(N-k)/2\big]$.
Note that by Remark \ref{rem:smoothness} and Lemma \ref{lem:properties_W} $(i)$,
\begin{equation}\label{eq:P_(2j_N+2)}
\chi(r)\,\frac{\sum_{0\le l\le\ell}(\log r)^l\big(\mathcal{P}_{2j_N+2}^l a\big)(\theta)}{r^{k+2j_N+2}}\in
\left\{
\begin{array}{l}
W^{m-1,p}_{\gamma_{N}+1},\quad(N-k)\,\text{-- even},\\ 
W^{m,p}_{\gamma_N},\quad(N-k)\,\,\text{-- odd},
\end{array}
\right. 
\end{equation}
and 
\begin{equation}\label{eq:R_(2j_N+1)}
\mathcal{R}^\ell_{2j_N+1}a\in W^{m,p}_{\gamma_N}.
\end{equation}
This together with Theorem \ref{th:sectorial_W} then implies that the last two terms on the right hand
side of \eqref{eq:j_N-general} belong to $W^{m,p}_{\gamma_N}$ when $(N-k)$ is odd, or
to ${\widetilde W}^{m+1,p}_{\gamma_N+1}$ when $(N-k)$ is even.
Since by \eqref{eq:inclusions_W},
\[
{\widetilde W}^{m+1,p}_{\gamma_N+1}\subseteq W^{m,p}_{\gamma_N},
\]
we conclude that the last two terms on the right hand side of \eqref{eq:j_N-general} belong to 
$W^{m,p}_{\gamma_N}$. Hence, by \eqref{eq:j_N-general},
\[
\lambda\big(\lambda-\Delta\big)^{-1}\Big(\chi(r)\,\frac{a(\theta)(\log r)^\ell}{r^k}\Big)\in 
\A^{m,p}_{n,N;0}
\]
and, by Theorem \ref{th:sectorial_W}, Lemma \ref{lem:properties_W}, 
and the definition of the norm in $\A^{m,p}_{n,N;0}$,
there exists $C\equiv C_{\beta,\epsilon,m}>0$ such that
\[ 
\Big\|\lambda\big(\lambda-\Delta\big)^{-1}\Big(\chi(r)\,\frac{a(\theta)(\log r)^\ell}{r^k}\Big)\Big\|_{\A^{m,p}_{n,N;0}}\le
C\,\|a\|_{H^{m+1+(N-k)}}
\]
for any $a\in H^{m+1+(N-k)}$ and for any $\lambda\in R_{\beta,\epsilon}$.
By combining this with Theorem \ref{th:sectorial_W}, applied to the remainder of $u$, we conclude that
\eqref{eq:resolvent_A-spaces} holds in the case when $\alpha=0$.
Let us now assume that $\alpha\ne 0$.
First, note that by \eqref{eq:d-hat_relation}, for any $b\in H^{m+1+(N-s),p}(\s^{d-1})$, $0\le s\le N$, $l\ge 0$,
and $1\le j\le d$,
\[
\partial_j\Big(\frac{b(\theta)(\log r)^l}{r^s}\Big)=
\frac{\big(({\hat\partial}_jb)(\theta)-s\,\theta_jb(\theta)\big)(\log r)^l+l\,\theta_j b(\theta)(\log r)^{l-1}}{r^{s+1}}
\]
where $\theta_j\equiv x_j/|x|$. This together with the definition \eqref{def:AW;-norm} of the norm in $\A^{m,p}_{n,N;0}$ implies that
\begin{equation}\label{eq:diff_assympt_term}
\left\|\partial_j\Big(\frac{b(\theta)(\log r)^l}{r^s}\Big)\right\|_{\A^{m,p}_{n,N;0}}\le C\|b\|_{H^{m+1+(N-s)}}.
\end{equation}
Let us first assume that $(N-k)$ is odd. Then, the terms in \eqref{eq:P_(2j_N+2)} and \eqref{eq:R_(2j_N+1)}
belong to $W^{m,p}_{\gamma_N}$. This and Theorem \ref{th:sectorial_W} then imply that
the last two terms on the right hand side of \eqref{eq:j_N-general} belong to $W^{m,p}_{\gamma_N}$
together with their weak partial derivatives up to order two. By combining this with \eqref{eq:diff_assympt_term} 
we obtain from \eqref{eq:j_N-general} that for any $n\le k\le N$, $0\le\ell\le k$, and $|\alpha|\le 2$,
\[
\lambda\,\partial^\alpha\big(\lambda-\Delta\big)^{-1}\Big(\chi(r)\,\frac{a(\theta)(\log r)^\ell}{r^k}\Big)\in
\A^{m,p}_{n,N;0}
\] 
and there exists $C\equiv C_{\alpha,\beta,\epsilon,m}>0$ such that
\begin{equation}\label{eq:resolvent_asymptotic_terms}
\Big\|\lambda\,\partial^\alpha\big(\lambda-\Delta\big)^{-1}\Big(\chi(r)\,\frac{a(\theta)(\log r)^\ell}{r^k}\Big)\Big\|_{\A^{m,p}_{n,N;0}}\le
C\,\|a\|_{H^{m+1+(N-k)}}
\end{equation}
for any $a\in H^{m+1+(N-k)}$ and for any $\lambda\in R_{\beta,\epsilon}$. We then take $u\in\A^{m,p}_{n,N;0}$ and apply 
\eqref{eq:resolvent_asymptotic_terms} to the asymptotic terms of $u$ and Theorem \ref{th:sectorial_W} to the remainder of $u$ 
to conclude that \eqref{eq:resolvent_A-spaces} holds for any $|\alpha|\le 2$ and $\lambda\in R_{\beta,\epsilon}$, in the case when $(N-k)$ is odd.
If $(N-k)$ is even we consider \eqref{eq:j_N-general} with $j_N=\big[(N-k)/2\big]-1$. Note that in this case
$\mathcal{R}^\ell_{2j_N+1}a\in W^{m,p}_{\gamma_N}$ and
\[
\chi(r)\,\frac{\sum_{0\le l\le\ell}(\log r)^l\big(\mathcal{P}_{2j_N+2}^l a\big)(\theta)}{r^{k+2j_N+2}}\in W^{m+1,p}_{\gamma_N-1}.
\]
Hence, 
\begin{equation*}
\mathcal{R}^\ell_{2j_N+1}a\in W^{m,p}_{\gamma_N}\quad\text{\rm and}\quad
\partial^\alpha\left(\chi(r)\,\frac{\sum_{0\le l\le\ell}(\log r)^l\big(\mathcal{P}_{2j_N+2}^l a\big)(\theta)}{r^{k+2j_N+2}}\right)\in
\left\{
\begin{array}{cc}
W^{m-1,p}_{\gamma_{N}+1},&|\alpha|=2,\\ 
W^{m,p}_{\gamma_N},&|\alpha|=1.
\end{array}
\right. 
\end{equation*}
We then apply $\partial^\alpha$ to \eqref{eq:j_N-general} with $j_N=\big[(N-k)/2\big]-1$, 
and argue as in the case $\alpha=0$ (cf. \eqref{eq:P_(2j_N+2)} and \eqref{eq:R_(2j_N+1)}) to conclude that 
the estimate \eqref{eq:resolvent_A-spaces} holds for any $0<|\alpha|\le 2$, $\lambda\in R_{\beta,\epsilon}$, and $(N-k)$ even. 
This completes the proof of Proposition \ref{prop:resovent_estimates_A}.
\end{proof}

\medskip

With this preparation let us now return to Theorem \ref{th:sectorial_A}.

\begin{proof}[Proof of Theorem \ref{th:sectorial_A}]
The theorem follows from Proposition \ref{prop:resovent_estimates_A} and the arguments used to prove Theorem \ref{th:SonW}.
\end{proof}

As a consequence we obtain

\begin{Th}\label{th:SonA}
Assume that $1<p<\infty$ and let $m\ge 0$ and $0\le n\le N$ be integers. Then, 
$\{S(t)\}_{t\ge 0}$ is an analytic semigroup of bounded operators on $\A_{n,N;0}^{m,p}$
with angle $\vartheta=\pi/2$ and generator $\Delta|_{\A_{n,N;0}^{m,p}}$ with domain ${\widetilde\A}^{m+2,p}_{n,N;0}$. 
The subspace $\A^{m+2,p}_{n,N;0}\subseteq{\widetilde\A}^{m+2,p}_{n,N;0}$ is invariant with respect to $\{S(t)\}_{t\ge 0}$
and for any $u_0\in\A^{m+2,p}_{n,N;0}$ the heat equation \eqref{eq:heat} (with $\nu=1$) has a unique solution
$u\in C\big([0,\infty),\A^{m+2,p}_{n,N;0}\big)\cap C^1\big([0,\infty),\A^{m,p}_{n,N;0}\big)$ and $u(t)=S(t)u_0$ for $t\in[0,\infty)$. 
\end{Th}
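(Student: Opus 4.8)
The plan is to reproduce the proof of Theorem \ref{th:SonW} line by line, using the resolvent estimates of Theorem \ref{th:sectorial_A} in place of those of Theorem \ref{th:sectorial_W}, since all the genuine analytic work has already been absorbed into Proposition \ref{prop:resovent_estimates_A} and Theorem \ref{th:sectorial_A}. Fix an arbitrary $\vartheta\in(0,\pi/2)$ and set $\epsilon:=\pi/2-\vartheta$. Applying Theorem \ref{th:sectorial_A}(ii) with $\alpha=0$, for every $\omega>0$ the resolvent of $\Delta|_{\A^{m,p}_{n,N;0}}$ obeys $\|(\lambda-\Delta)^{-1}\|_{\LL(\A^{m,p}_{n,N;0})}\le C/|\lambda-\omega|$ on the sector $\Sigma_{\omega,\epsilon}=\{|\arg(\lambda-\omega)|<\pi/2+\vartheta\}$. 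Thus $\Delta|_{\A^{m,p}_{n,N;0}}$, with domain ${\widetilde\A}^{m+2,p}_{n,N;0}$, is sectorial in the sense of \cite[Theorem 5.2, \S2.5]{Pazy}, and hence generates an analytic semigroup $\{T(t)\}_{t\ge 0}$ on $\A^{m,p}_{n,N;0}$ of angle $\vartheta$; the space is invariant under $T(t)$, and for any $u_0\in{\widetilde\A}^{m+2,p}_{n,N;0}$ the curve $u(t)=T(t)u_0$ is the unique solution of the abstract Cauchy problem, with $u\in C\big([0,\infty),{\widetilde\A}^{m+2,p}_{n,N;0}\big)\cap C^1\big([0,\infty),\A^{m,p}_{n,N;0}\big)$.

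The one step that is not pure transcription, and which I expect to be the main obstacle, is the identification $T(t)=S(t)$ of the abstractly generated semigroup with the Gaussian heat semigroup; everything else is formal. I would argue exactly as for Corollary \ref{coro:exp=S} in the weighted-Sobolev case: $\A^{m,p}_{n,N;0}$ embeds continuously into $\Sz'$, and by Theorem \ref{th:sectorial_A}(i) the resolvent $(\lambda-\Delta)^{-1}$ on $\A^{m,p}_{n,N;0}$ is the restriction of the resolvent of $\lambda-\Delta:\Sz'\to\Sz'$. Consequently, for $u_0\in{\widetilde\A}^{m+2,p}_{n,N;0}$ the curve $t\mapsto T(t)u_0$ solves $u_t=\Delta u$, $u|_{t=0}=u_0$ in $\Sz'$, as does $t\mapsto S(t)u_0$; the uniqueness of the heat flow in $\Sz'$ (Proposition \ref{prop:heat_equation_in_S'}) then forces $T(t)u_0=S(t)u_0$. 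Density of ${\widetilde\A}^{m+2,p}_{n,N;0}$ in $\A^{m,p}_{n,N;0}$ together with the boundedness of both families extends this to $T(t)\equiv S(t)$ on $\A^{m,p}_{n,N;0}$ for all $t\ge 0$. Since $\vartheta\in(0,\pi/2)$ was arbitrary and the sectors $\bbS_\vartheta$ exhaust $\bbS_{\pi/2}$, the semigroup $\{S(t)\}_{t\ge 0}$ has angle $\pi/2$, which is the first assertion.

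It remains to handle the invariance of $\A^{m+2,p}_{n,N;0}$ and the improved regularity. The key observation is that $\{S(t)\}_{t\ge 0}$ is a single family of operators on $\Sz'$, so the first assertion, now established for every regularity exponent $\ge 0$, may be invoked with $m$ replaced by $m+2$; this shows directly that $S(t):\A^{m+2,p}_{n,N;0}\to\A^{m+2,p}_{n,N;0}$ is a bounded analytic semigroup, giving the claimed invariance and, by strong continuity, $u=S(\cdot)u_0\in C\big([0,\infty),\A^{m+2,p}_{n,N;0}\big)$ for $u_0\in\A^{m+2,p}_{n,N;0}$. Since $\A^{m+2,p}_{n,N;0}\subseteq{\widetilde\A}^{m+2,p}_{n,N;0}$ is contained in the domain of $\Delta|_{\A^{m,p}_{n,N;0}}$, the first paragraph already yields $u\in C^1\big([0,\infty),\A^{m,p}_{n,N;0}\big)$ with $u_t=\Delta u$, and uniqueness is inherited from the semigroup. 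Thus, apart from the $\Sz'$-identification $T(t)=S(t)$, the argument is a routine repetition of the weighted-Sobolev case.
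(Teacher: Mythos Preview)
Your proposal is correct and follows essentially the same approach as the paper, which simply states that the theorem follows from the resolvent estimates in Theorem \ref{th:sectorial_A} together with the argument used to prove Theorem \ref{th:SonW}. Your spelling-out of the identification $T(t)=S(t)$ via Corollary \ref{coro:exp=S} and of the invariance of $\A^{m+2,p}_{n,N;0}$ by bootstrapping the regularity index is exactly what that argument amounts to.
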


\begin{Rem}\label{rem:SonA}
It follows from Theorem \ref{th:SonA} that for any $u_0\in\A^{m+2,p}_{n,N;0}$ and $\vartheta\in(0,\pi/2)$ the holomorphic map
$u : \bbS_{\vartheta}\to\A^{m+2,p}_{n,N;0}$ when extended by continuity to $\bbS_{\vartheta}\cup\{0\}$
satisfies the (complex) heat equation $u_z=\Delta u$, $u|_{t=0}=u_0$.
\end{Rem}


\begin{proof}[Proof of Theorem \ref{th:SonA}]
The theorem follows from the resolvent estimates in Theorem \ref{th:sectorial_A} and the argument used in the proof of
Theorem \ref{th:SonW}.
\end{proof}

Theorem \ref{th:sectorial_A}, Theorem \ref{th:SonA}, and the arguments used in the proof of
Corollary \ref{coro:S(t)-estimatesW}.

\begin{Coro}\label{coro:S(t)-estimatesA}
Assume that $1<p<\infty$ and let $m\ge 0$ and $0\le n\le N$ be integers.
Then, for any $\omega>0$ and for any $\vartheta\in(0,\pi/2)$ there exists $C\equiv C_{\omega,\vartheta,m}>0$ such that 
for any multi-index $\alpha\in\Z^d_{\ge 0}$, $|\alpha|\le 2$, 
\begin{equation}\label{eq:S(t)-A-estimates}
\big\|\partial^\alpha S(z)\big\|_{\LL(\A^{m,p}_{n,N;0})}\le
C\,e^{\omega|z|}/|z|^{\frac{|\alpha|}{2}}
\end{equation}
for any $z\in\bbS_\vartheta$. Moreover, for any multi-index $\alpha$ with $|\alpha|\le 2$
the map $z\mapsto\partial^\alpha S(z)$, $\bbS_{\pi/2}\to\LL(\A^{m,p}_{n,N;0})$, is holomorphic.
\end{Coro}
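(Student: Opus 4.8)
The plan is to transcribe the proof of Corollary~\ref{coro:S(t)-estimatesW} almost verbatim, replacing $W^{m,p}_\delta$ by $\A^{m,p}_{n,N;0}$, the space ${\widetilde W}^{m+2,p}_\delta$ by ${\widetilde\A}^{m+2,p}_{n,N;0}$, Theorem~\ref{th:sectorial_W} by Theorem~\ref{th:sectorial_A}, and Theorem~\ref{th:SonW} by Theorem~\ref{th:SonA}. Fix $\omega>0$, $\vartheta\in(0,\pi/2)$, and a multi-index $\alpha$ with $|\alpha|\le2$. The first step is to establish that
\[
\lambda\mapsto\partial^\alpha(\lambda-\Delta)^{-1},\qquad\Sigma_{\omega,\pi/2-\vartheta}\to\LL(\A^{m,p}_{n,N;0}),
\]
is holomorphic. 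Once this is in hand, the analytic semigroup $S(z)\equiv T(z)$ produced by Theorem~\ref{th:SonA} admits the Riesz--Dunford representation along the contour $\omega+\Gamma_{\rho,\eta}$ exactly as in \eqref{eq:pS-integral}, the operator $\partial^\alpha$ may be pulled under the integral sign, and the estimate \eqref{eq:S(t)-A-estimates} follows from the rescaling $\zeta:=z\lambda$ and the contour estimate \eqref{eq:riesz-exponent3}, with the resolvent bound of Theorem~\ref{th:sectorial_W}(ii) replaced by that of Theorem~\ref{th:sectorial_A}(ii).

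The one step that does not transfer word-for-word is the holomorphy claim, since the density argument in the $W$-space case rests on $C_c^\infty$ (more precisely $C_b^\infty$) being dense in $W^{m,p}_\delta$, and this fails in the asymptotic space because the asymptotic part of an element of $\A^{m,p}_{n,N;0}$ cannot be approximated by compactly supported functions. Instead I would argue as follows. By Theorem~\ref{th:sectorial_A}(i) the resolvent is an isomorphism $(\lambda-\Delta)^{-1}:\A^{m,p}_{n,N;0}\to{\widetilde\A}^{m+2,p}_{n,N;0}$, so the norm of ${\widetilde\A}^{m+2,p}_{n,N;0}$ is equivalent to the graph norm of $\Delta|_{\A^{m,p}_{n,N;0}}$. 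Since $\lambda\mapsto(\lambda-\Delta)^{-1}u$ and $\lambda\mapsto\Delta(\lambda-\Delta)^{-1}u=\lambda(\lambda-\Delta)^{-1}u-u$ are both holomorphic into $\A^{m,p}_{n,N;0}$ for each fixed $u$, the map $\lambda\mapsto(\lambda-\Delta)^{-1}u$ is holomorphic into ${\widetilde\A}^{m+2,p}_{n,N;0}$; composing with the bounded operator $\partial^\alpha:{\widetilde\A}^{m+2,p}_{n,N;0}\to\A^{m,p}_{n,N;0}$ shows that $\lambda\mapsto\partial^\alpha(\lambda-\Delta)^{-1}u$ is holomorphic into $\A^{m,p}_{n,N;0}$ for every $u$. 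Theorem~\ref{th:sectorial_A}(ii) gives local boundedness of the operator norm on $\Sigma_{\omega,\pi/2-\vartheta}$, and a strongly holomorphic, locally norm-bounded family of bounded operators is holomorphic in the operator norm (cf. \cite[Theorem 3.12, Ch. III]{Kato}); this yields the desired holomorphy.

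With the holomorphy and the resolvent estimate available, the remainder is computational and identical to Corollary~\ref{coro:S(t)-estimatesW}. Setting $\eta:=\pi-\vartheta'$ with $\vartheta'\in(\vartheta,\pi/2)$ as in \eqref{eq:eta}, I would start from the representation \eqref{eq:pS-integral} now read on $\A^{m,p}_{n,N;0}$, substitute $\zeta:=z\lambda$ for complex $z\in\bbS_\vartheta$, and check that $\omega+\zeta/z\in\Sigma_{\omega,\vartheta'-\vartheta}$ for $\zeta\in\Gamma_{\rho,\eta}$, so that Theorem~\ref{th:sectorial_A}(ii) applies to $\partial^\alpha\big((\omega+\zeta/z)-\Delta\big)^{-1}$. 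Bounding the contour integral as in \eqref{eq:riesz-exponent2}--\eqref{eq:riesz-exponent3} produces the factor $|z|^{|\alpha|/2}$ from the resolvent estimate and the factor $e^{\omega|z|}$ from the shift of the contour, giving \eqref{eq:S(t)-A-estimates}; differentiation under the integral sign gives holomorphy of $z\mapsto\partial^\alpha S(z)$ on $\bbS_\vartheta$, and letting $\vartheta\uparrow\pi/2$ extends it to $\bbS_{\pi/2}$.

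The hard part is therefore concentrated entirely in the holomorphy step above: one must circumvent the loss of density of $C_c^\infty$ by exploiting that the resolvent lands in the domain ${\widetilde\A}^{m+2,p}_{n,N;0}$ holomorphically. Note also that, in contrast to the $W$-space case, the constants here carry a dependence on $m$ inherited from the $m$-dependent constant in Theorem~\ref{th:sectorial_A}(ii), which is harmless for the statement of the corollary.
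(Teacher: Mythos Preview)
Your proposal is correct and follows the paper's overall strategy of transcribing the proof of Corollary~\ref{coro:S(t)-estimatesW} to the asymptotic setting, with the only nontrivial change being the holomorphy of $\lambda\mapsto\partial^\alpha(\lambda-\Delta)^{-1}$. Here you take a genuinely different route from the paper. The paper replaces the dense subspace $C_c^\infty$ by the dense subspace $\A^{m+|\alpha|,p}_{n,N;0}\subseteq\A^{m,p}_{n,N;0}$: for $\varphi$ in this smoother space one can commute $\partial^\alpha(\lambda-\Delta)^{-1}\varphi=(\lambda-\Delta)^{-1}\partial^\alpha\varphi$ (using Proposition~\ref{pr:A-properties} to see $\partial^\alpha\varphi\in\A^{m,p}_{n+|\alpha|,N+|\alpha|;-|\alpha|}\subseteq\A^{m,p}_{n,N;0}$), so the strong holomorphy on a dense set plus local boundedness yields norm holomorphy via \cite[Theorem~3.12, Ch.~III]{Kato}. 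Your argument instead bypasses the choice of a dense subspace entirely: you observe that $\lambda\mapsto(\lambda-\Delta)^{-1}u$ is holomorphic into the domain ${\widetilde\A}^{m+2,p}_{n,N;0}$ (since both the value and $\Delta$ applied to it are holomorphic in the base space, and the domain norm is equivalent to the graph norm), and then compose with the bounded map $\partial^\alpha:{\widetilde\A}^{m+2,p}_{n,N;0}\to\A^{m,p}_{n,N;0}$. Your version is somewhat cleaner in that it does not need to identify a tailored dense subspace or invoke the mapping properties of $\partial^\alpha$ between different asymptotic scales; the paper's version, on the other hand, stays closer in form to the $W$-space argument. Both approaches land on the same invocation of Kato's theorem and proceed identically through the contour-integral estimates thereafter.
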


\begin{proof}[Proof of Corollary \ref{coro:S(t)-estimatesA}]
We will follow the proof of Corollary \ref{coro:S(t)-estimatesW}.
Assume that $1<p<\infty$ and let $m\ge 0$ and $0\le n\le N$ be integers.
Take $\omega>0$, $\vartheta\in(0,\pi/2)$, and a multi-index $\alpha$ such that $|\alpha|\le 2$.
We will first show that the map 
\begin{equation}\label{eq:p-resolventA}
\lambda\mapsto\partial^\alpha(\lambda-\Delta)^{-1},\quad\Sigma_{\omega,\pi/2-\vartheta}\to\LL(\A^{m,p}_{n,N;0}),
\end{equation}
is holomorphic. In order to see this take $\varphi\in\A^{m+|\alpha|,p}_{n,N;0}$ and note that by Theorem \ref{th:sectorial_A} (i) and
Proposition \ref{pr:A-properties},
\[
\partial^\alpha(\lambda-\Delta)^{-1}\varphi=(\lambda-\Delta)^{-1}\partial^\alpha\varphi\in\A^{m,p}_{n+|\alpha|,N+|\alpha|;-|\alpha|}
\subseteq\A^{m,p}_{n,N;0}.
\]
This implies that the map $\lambda\mapsto\partial^\alpha(\lambda-\Delta)^{-1}\varphi$, 
$\Sigma_{\omega,\pi/2-\vartheta}\to\A^{m,p}_{n,N;0}$, is holomorphic by Theorem \ref{th:sectorial_A} (i) 
and the properties of the resolvent. Hence, for any functional $f\in\big(\A^{m,p}_{n,N;0}\big)'$ the map 
\[
\lambda\mapsto\big\langle f,\partial^\alpha(\lambda-\Delta)^{-1}\varphi\big\rangle,\quad
\Sigma_{\omega,\pi/2-\vartheta}\to\C,
\]
is holomorphic. Since by Theorem  \ref{th:sectorial_A} (ii) the map \eqref{eq:p-resolventW} is locally bounded and since
$\A^{m+|\alpha|,p}_{n,N;0}$ is dense in $\A^{m,p}_{n,N;0}$, we then conclude from \cite[Theorem 3.12, Ch. III]{Kato} that 
\eqref{eq:p-resolventA} is analytic. 
Now, one can argue in precisely the same way as in the proof of Corollary \ref{coro:S(t)-estimatesA} to conclude 
the proof of the corollary.
\end{proof}

By combining Corollary \ref{coro:S(t)-estimatesA} with Lemma \ref{lem:equivalent_A-norms} we obtain

\begin{Coro}\label{coro:S(t)-estimatesA*}
Assume that $1<p<\infty$ and let $m\ge 0$ and $0\le n\le N$ be integers.
Then, for any $\omega>0$ and for any $\vartheta\in(0,\pi/2)$ there exists $C\equiv C_{\omega,\vartheta,m}>0$ such that  
for any $\tau\in\{0,1,2\}$, 
\begin{equation}\label{eq:S(t)-A-estimates*}
\big\|S(z)\big\|_{\LL(\A^{m,p}_{n+\tau,N+\tau;0},\A^{m+\tau,p}_{n,N;0})}\le
C\,e^{\omega|z|}\max\big(1,1/|z|^{\frac{\tau}{2}}\big)
\end{equation}
for any $z\in\bbS_\vartheta$. Moreover, the map $z\mapsto S(z)$, 
$\bbS_{\pi/2}\to\LL(\A^{m,p}_{n+\tau,N+\tau;0},\A^{m+\tau,p}_{n,N;0})$, is holomorphic.
\end{Coro}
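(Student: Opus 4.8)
The plan is to follow the proof of Corollary~\ref{coro:S(t)-estimatesW*} almost verbatim, with Corollary~\ref{coro:S(t)-estimatesW} replaced by its asymptotic-space analogue Corollary~\ref{coro:S(t)-estimatesA} and Lemma~\ref{lem:equivalent_W-norms} replaced by Lemma~\ref{lem:equivalent_A-norms}. Fix $\omega>0$, $\vartheta\in(0,\pi/2)$ and $\tau\in\{0,1,2\}$, and take $u_0\in\A^{m,p}_{n+\tau,N+\tau;0}$. By Lemma~\ref{lem:equivalent_A-norms} the norm of $\A^{m+\tau,p}_{n,N;0}$ is equivalent to $\sum_{|\alpha|\le\tau}\|\partial^\alpha(\,\cdot\,)\|_{\A^{m,p}_{n+|\alpha|,N+|\alpha|;0}}$, so it suffices to bound each $\|\partial^\alpha S(z)u_0\|_{\A^{m,p}_{n+|\alpha|,N+|\alpha|;0}}$ for $|\alpha|\le\tau$ and then sum.

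First I would record the continuous inclusions
\[
\A^{m,p}_{n+\tau,N+\tau;0}\hookrightarrow\A^{m,p}_{n+|\alpha|,N+|\alpha|;0},\qquad |\alpha|\le\tau,
\]
(cf.\ Proposition~\ref{pr:A-properties}): the asymptotic terms of $u_0$ of order $k\in[n+|\alpha|,N+|\alpha|]$ remain legitimate asymptotic terms of the target space with admissible log-degrees, the terms of order $k\in[N+|\alpha|+1,N+\tau]$ satisfy $k>\gamma_{N+|\alpha|}+d/p$ (since $\gamma_0+d/p<1$) and are therefore absorbed into the remainder space $W^{m,p}_{\gamma_{N+|\alpha|}}$, and the original remainder embeds via $W^{m,p}_{\gamma_{N+\tau}}\subseteq W^{m,p}_{\gamma_{N+|\alpha|}}$ because $\gamma_{N+\tau}=\gamma_{N+|\alpha|}+(\tau-|\alpha|)$ with $\tau-|\alpha|\ge0$. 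Next I apply Corollary~\ref{coro:S(t)-estimatesA} with the order indices $(n+|\alpha|,N+|\alpha|)$ in place of $(n,N)$ — legitimate since that corollary holds for every admissible pair $0\le n'\le N'$ — to get $\|\partial^\alpha S(z)\|_{\LL(\A^{m,p}_{n+|\alpha|,N+|\alpha|;0})}\le Ce^{\omega|z|}/|z|^{|\alpha|/2}$ uniformly for $z\in\bbS_\vartheta$. Combining this with the inclusion above yields
\[
\big\|\partial^\alpha S(z)u_0\big\|_{\A^{m,p}_{n+|\alpha|,N+|\alpha|;0}}\le\frac{Ce^{\omega|z|}}{|z|^{|\alpha|/2}}\,\big\|u_0\big\|_{\A^{m,p}_{n+|\alpha|,N+|\alpha|;0}}\le\frac{Ce^{\omega|z|}}{|z|^{|\alpha|/2}}\,\big\|u_0\big\|_{\A^{m,p}_{n+\tau,N+\tau;0}}.
\]

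Summing over $|\alpha|\le\tau$ and using Lemma~\ref{lem:equivalent_A-norms} once more gives
\[
\big\|S(z)u_0\big\|_{\A^{m+\tau,p}_{n,N;0}}\le Ce^{\omega|z|}\max\big(1,1/|z|^{\tau/2}\big)\,\big\|u_0\big\|_{\A^{m,p}_{n+\tau,N+\tau;0}},
\]
the constant ($|\alpha|=0$) term dominating for $|z|\ge1$ and the top ($|\alpha|=\tau$) term for $|z|\le1$; this is the asserted estimate \eqref{eq:S(t)-A-estimates*}. For the holomorphy statement I would note that, by Corollary~\ref{coro:S(t)-estimatesA}, each map $z\mapsto\partial^\alpha S(z)$ is holomorphic from $\bbS_{\pi/2}$ into $\LL(\A^{m,p}_{n+|\alpha|,N+|\alpha|;0})$; precomposing with the bounded inclusion above makes $z\mapsto\partial^\alpha S(z)$ holomorphic into $\LL(\A^{m,p}_{n+\tau,N+\tau;0},\A^{m,p}_{n+|\alpha|,N+|\alpha|;0})$. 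Since Lemma~\ref{lem:equivalent_A-norms} identifies $\A^{m+\tau,p}_{n,N;0}$ with a closed subspace of $\bigoplus_{|\alpha|\le\tau}\A^{m,p}_{n+|\alpha|,N+|\alpha|;0}$ via $v\mapsto(\partial^\alpha v)_{|\alpha|\le\tau}$, and $S(z)$ corresponds under this identification to $z\mapsto(\partial^\alpha S(z))_{|\alpha|\le\tau}$, holomorphy of $z\mapsto S(z)\in\LL(\A^{m,p}_{n+\tau,N+\tau;0},\A^{m+\tau,p}_{n,N;0})$ on $\bbS_{\pi/2}$ follows.

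The main obstacle is bookkeeping rather than analysis: the genuine analytic content (sectoriality and the resolvent bounds) is already packaged in Corollary~\ref{coro:S(t)-estimatesA} and in the equivalent-norm Lemma~\ref{lem:equivalent_A-norms}. The one point requiring care is the matching of asymptotic indices — invoking Corollary~\ref{coro:S(t)-estimatesA} on the shifted spaces $\A^{m,p}_{n+|\alpha|,N+|\alpha|;0}$ and verifying the inclusions $\A^{m,p}_{n+\tau,N+\tau;0}\hookrightarrow\A^{m,p}_{n+|\alpha|,N+|\alpha|;0}$, the delicate step being that the over-decaying asymptotic terms (orders $N+|\alpha|<k\le N+\tau$) are reabsorbed into the remainder without violating the $\ell=0$ log-degree constraint. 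If Lemma~\ref{lem:equivalent_A-norms} is instead phrased with the spaces $\A^{m,p}_{n+|\alpha|,N+|\alpha|;-|\alpha|}$, nothing changes: $\partial^\alpha S(z)u_0$ has log-degree at most $k-|\alpha|$ at order $k$, so its norm in that space coincides with its norm in $\A^{m,p}_{n+|\alpha|,N+|\alpha|;0}$, on which Corollary~\ref{coro:S(t)-estimatesA} is stated.
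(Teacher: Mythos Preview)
Your proof is correct and follows exactly the route the paper indicates: the paper's entire proof is the single sentence ``By combining Corollary~\ref{coro:S(t)-estimatesA} with Lemma~\ref{lem:equivalent_A-norms} we obtain'', and you have fleshed out precisely that combination. Your final paragraph correctly handles the one genuine subtlety the paper suppresses---namely that Lemma~\ref{lem:equivalent_A-norms} is stated with the $\ell=-1$ (hence $-|\alpha|$) spaces while Corollary~\ref{coro:S(t)-estimatesA} is stated on the $\ell=0$ spaces---by observing that $\partial^\alpha S(z)u_0$ actually lies in the smaller $\ell=-|\alpha|$ space and that the two norms agree there.
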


\section{The Navier-Stokes flow on asymptotic spaces}\label{sec:navier-stokes}
Assume that $m>2+\frac{d}{p}$, $N\in\Z_{\ge 0}$, and $1<p<\infty$. In this section we prove the local well-posedness of the 
Navier-Stokes equation  \eqref{eq:NS} in the asymptotic space with log terms $\A_{N;0}^{m,p}\equiv\A_{0,N;0}^{m,p}$.
For simplicity of notation, in what follows we write $\A^{m,p}_{n,N;\ell}$ regardless of whether we mean the function space of
scalar functions or tensor fields on $\R^d$ with components in $\A^{m,p}_{n,N;\ell}$. 
Recall that $\accentset{\,\,\,\circ}\A^{m,p}_{n,N;\ell}$ denotes the closed space of divergence free vector fields in
$\A^{m,p}_{n,N;\ell}$,
\[
\accentset{\,\,\,\circ}\A^{m,p}_{n,N;\ell}\equiv\big\{u\in\A^{m,p}_{n,N;\ell}\,\big|\,\Div u=0\big\}.
\]

\begin{Rem}\label{rem:definition_solution}
Let $I$ be one of the intervals $[0,T]$ or $[0,T)$ for some $T>0$.
We call $u\in C\big(I,\accentset{\,\,\,\circ}\A^{m,p}_{N;0}\big)\cap C^1\big(I,\accentset{\,\,\,\circ}\A^{m-2,p}_{N;0}\big)$ a {\em solution} of
the Navier-Stokes equation if $u$ satisfies \eqref{eq:NS} for some $\p\in C\big(I,\Sz'\big)$ such that for any $t\in I$ 
the gradient $\nabla\p(t)\in L^\infty_\delta$ for a given $\delta>0$ (independent of $t$) where $L^\infty_\delta$ denotes the space of 
measurable vector fields $v$ on $\R^d$ such that $\x^\delta|v(x)|$ is (essentially) bounded on $\R^d$.
We will see below that for any $t\in I$ the gradient of the pressure $\nabla\p(t)$ (and the pressure $\p(t)$ itself) has an asymptotic expansion 
with log terms in the spatial direction (see \eqref{eq:trick6}, \eqref{eq:p-asymptotics}). 
The decay condition on the gradient of $\p(t)$ is necessary since otherwise the uniqueness statement in 
Theorem \ref{th:NS} does {\em not} hold.
\end{Rem}

Now, take $u_0\in\accentset{\,\,\,\circ}\A^{m,p}_{N;0}$. 
Our first step is to re-write the Navier-Stokes equation in a convenient form (see e.g. \cite{McOwenTopalov3}): 
Let $u\in C\big([0,T],\accentset{\,\,\,\circ}\A^{m,p}_{N;0}\big)\cap C^1\big([0,T],\accentset{\,\,\,\circ}\A^{m-2,p}_{N;0}\big)$ be a solution of \eqref{eq:NS}. 
Then
\begin{equation}\label{eq:NS1}
u_t+u\cdot\nabla u=\nu \Delta u-\nabla\p,
\end{equation}
where for any given $t\in[0,T]$ the gradient $\nabla\p$ satisfies the decay condition in Remark \ref{rem:definition_solution}. 
By applying the divergence operator $\Div : \Sz'\to \Sz'$ to both sides of \eqref{eq:NS1} and then using that 
$\Div u=0$ we conclude that pointwise in $t\in[0,T]$,
\begin{equation}\label{eq:trick1}
\Div\big(u\cdot\nabla u\big)=-\Delta\p.
\end{equation}
In view of the Sobolev embedding $\accentset{\,\,\,\circ}\A^{m,p}_{N;0}\subseteq  C^2$ one obtains by a direct computation that
\begin{eqnarray}
\Div\big(u\cdot\nabla u\big)&=&\tr\big([du]^2\big)+u\cdot\nabla(\Div u)\label{eq:trick2}\\
&=&\tr\big([du]^2\big)\label{eq:trick3},
\end{eqnarray}
where $[du]^2$ is the square of the Jacobi matrix $[du]$ of $u(t,\cdot) : \R^d\to\R^d$ and $\tr$ is the trace of a matrix.
Denote,
\[
Q(u):=\tr\big([du]^2\big).
\]
By \eqref{eq:trick1} and \eqref{eq:trick3} we conclude that pointwise in $t\in[0,T]$,
\begin{equation}\label{eq:trick4}
-\Delta\p=Q(u)\quad\text{and}\quad-\Delta\circ\nabla\p=\nabla\circ Q(u).
\end{equation}
It follows from Proposition \ref{pr:A-properties} (ii) and Proposition \ref{pr:A-products} (ii) that 
$[du]\in\A^{m-1}_{1,N+1;-1}$ and $\tr[du]^2\in\A^{m-1,p}_{2,N+2;-2}$. Hence,
\begin{equation}\label{eq:trick5}
Q(u)\equiv\tr[du]^2\in\A^{m-1,p}_{2,N+2;-2}\quad\text{and}\quad\nabla\circ Q(u)\in\A^{m-2,p}_{3,N+3;-3}.
\end{equation}
Now, assume that $0<\gamma_0+(d/p)<1$.
Recall from \cite[Proposition 3.1]{McOwenTopalov3} that there is a closed subspace ${\widehat\A}^{m,p}_{1,N+1;-1}$ 
in $\A^{m,p}_{1,N+1;0}$ such that
\begin{equation}\label{eq:Delta_isomorphism}
\Delta : {\widehat\A}^{m,p}_{1,N+1;-1}\to\A^{m-2,p}_{3,N+3;-3}
\end{equation}
is an isomorphism.
(We refer to \cite[Section 3]{McOwenTopalov3} for the definition of the space ${\widehat\A}^{m,p}_{1,N+1;-1}$ and 
the proof that \eqref{eq:Delta_isomorphism} is an isomorphism.) This together with \eqref{eq:trick4}, \eqref{eq:trick5}, 
and the decay condition on $\nabla\p$ implies that for any $t\in[0,T]$,
\begin{equation}\label{eq:trick6}
\nabla\p(t)=\Delta^{-1}\circ\nabla\circ Q(u(t))\in{\widehat\A}^{m,p}_{1,N+1;-1}\subseteq \A^{m,p}_{1,N+1;0}\subseteq \A^{m,p}_{N;0},
\end{equation}
where $\Delta^{-1}$ is the inverse of \eqref{eq:Delta_isomorphism}.
The same holds also in the case when $\gamma_0+(d/p)=0$ -- see the proof of \cite[Proposition C.1]{McOwenTopalov4}.
Here we also used the standard fact that the kernel of the Laplacian $\Delta : \Sz'\to \Sz'$ consists of 
harmonic polynomials in $\R^d$. In this way we see that $u$ satisfies in $\A^{m-2,p}_{N;0}$ the equation
\begin{equation}\label{eq:NS-modified}
u_t+u\cdot\nabla u=\nu \Delta u+\Delta^{-1}\circ\nabla\circ Q(u).
\end{equation}
In fact, we have

\begin{Prop}\label{prop:NS-modified}
Assume that $m>2+\frac{d}{p}$, $N\in\Z_{\ge 0}$, and $1<p<\infty$.
A curve $u\in C\big([0,T],\A^{m,p}_{N;0}\big)\cap C^1\big([0,T],\A^{m-2,p}_{N;0}\big)$ is a solution of
\eqref{eq:NS-modified} with initial data $u_0\in\accentset{\,\,\,\circ}\A^{m,p}_{N;0}$ if and only if $u$ lies in 
$C\big([0,T],\accentset{\,\,\,\circ}\A^{m,p}_{N;0}\big)\cap C^1\big([0,T],\accentset{\,\,\,\circ}\A^{m-2,p}_{N;0}\big)$ and is a solution of
the Navier-Stokes equation \eqref{eq:NS}.
\end{Prop}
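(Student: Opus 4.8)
The statement is an equivalence, so I would prove the two implications separately. The implication from a genuine Navier--Stokes solution to a solution of \eqref{eq:NS-modified} is essentially the computation already carried out in \eqref{eq:trick1}--\eqref{eq:trick6}: if $u$ is divergence free and solves \eqref{eq:NS} with a pressure whose gradient satisfies the decay condition of Remark \ref{rem:definition_solution}, then applying $\Div$ to \eqref{eq:NS1} and using $\Div u=0$ together with \eqref{eq:trick2}--\eqref{eq:trick4} forces $-\Delta\p=Q(u)$; the isomorphism \eqref{eq:Delta_isomorphism} and the exclusion of harmonic polynomials (the only decaying harmonic distribution being zero) then identify $\nabla\p$ uniquely as in \eqref{eq:trick6}, and substituting back into \eqref{eq:NS1} yields \eqref{eq:NS-modified}. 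For this direction I would only need to record that all terms lie in the asserted spaces, which is exactly \eqref{eq:trick5}.

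The substantive direction is the converse. Here I assume $u\in C\big([0,T],\A^{m,p}_{N;0}\big)\cap C^1\big([0,T],\A^{m-2,p}_{N;0}\big)$ solves \eqref{eq:NS-modified} with $\Div u_0=0$, and I must show that $u$ stays divergence free and then solves \eqref{eq:NS}. The plan is to derive a closed linear evolution equation for $w:=\Div u$. Applying $\Div$ to \eqref{eq:NS-modified} and invoking the pointwise identity \eqref{eq:trick2}, namely $\Div(u\cdot\nabla u)=Q(u)+u\cdot\nabla w$, reduces the problem to computing $\Div\big(\Delta^{-1}\circ\nabla\circ Q(u)\big)$. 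Writing $G:=\Delta^{-1}\circ\nabla\circ Q(u)\in\widehat\A^{m,p}_{1,N+1;-1}$, the defining property of the isomorphism \eqref{eq:Delta_isomorphism} gives $\Delta G=\nabla Q(u)$, whence $\Delta\big(\Div G-Q(u)\big)=\Div\nabla Q(u)-\Delta Q(u)=0$. Since by \eqref{eq:trick5} both $\Div G$ and $Q(u)$ vanish at infinity (they lie in asymptotic spaces of leading order $r^{-2}$), the harmonic distribution $\Div G-Q(u)$ is a decaying harmonic polynomial and therefore vanishes. Consequently $w$ satisfies the scalar linear transport--diffusion equation
\begin{equation*}
\partial_t w=\nu\Delta w-u\cdot\nabla w,\qquad w(0)=\Div u_0=0.
\end{equation*}

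To conclude $w\equiv 0$ I would invoke uniqueness for this linear equation through the analytic semigroup machinery of Section \ref{sec:A-spaces}. Since $w(0)=0$, Duhamel's formula gives $w(t)=-\int_0^t S\big(\nu(t-s)\big)\big(u(s)\cdot\nabla w(s)\big)\,ds$, and the smoothing estimate of Corollary \ref{coro:S(t)-estimatesA*} (which lets $S\big(\nu(t-s)\big)$ absorb the one extra derivative in $u\cdot\nabla w$ at the integrable cost $(t-s)^{-1/2}$) yields a singular Gronwall inequality forcing $w\equiv 0$ on a short interval, which is then propagated to all of $[0,T]$ by iteration. Once $\Div u\equiv 0$ is known, I recover the pressure by setting $\nabla\p$ equal to the negative of the forcing term in \eqref{eq:NS-modified}; by the analysis leading to \eqref{eq:trick6} this gradient lies in $\widehat\A^{m,p}_{1,N+1;-1}\subseteq\A^{m,p}_{1,N+1;0}\subseteq L^\infty_\delta$ for some $\delta>0$, so the decay condition of Remark \ref{rem:definition_solution} holds and \eqref{eq:NS-modified} becomes exactly \eqref{eq:NS}.

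I expect the main obstacle to be this uniqueness step for the equation governing $w$: one must control the transport term $u\cdot\nabla w$ in the asymptotic norm with only the half-derivative of smoothing available from the heat semigroup, and verify that the Gronwall iteration closes in the correct space given the regularity $w\in C\big([0,T],\A^{m-1,p}_{1,N+1;-1}\big)\cap C^1\big([0,T],\A^{m-3,p}_{1,N+1;-1}\big)$ inherited from $u$ (which requires $m>2+\tfrac{d}{p}$). The algebraic identity $\Div\big(\Delta^{-1}\circ\nabla\circ Q(u)\big)=Q(u)$ is conceptually the crux of the divergence propagation, but it is cheap once the decay-and-harmonicity argument above is in place.
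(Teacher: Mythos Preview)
Your proposal is correct and takes a genuinely different route from the paper in the substantive direction. Both arguments apply $\Div$ to \eqref{eq:NS-modified} to obtain an evolution equation for $w=\Div u$ and then show $w\equiv 0$. You derive the advection--diffusion equation $w_t+u\cdot\nabla w=\nu\Delta w$ and close via Duhamel's formula and a singular Gronwall argument based on the smoothing estimate of Corollary~\ref{coro:S(t)-estimatesA*}. The paper instead records the equation as pure transport, $(\Div u)_t+u\cdot\nabla(\Div u)=0$ (the term $\nu\Delta(\Div u)$ appears to have been dropped in passing to \eqref{eq:div-equation}), constructs the Lagrangian flow $\varphi(t,\cdot)$ of $u$ from the uniform bound on $[du]$ supplied by $u\in C([0,T],\A^{m,p}_{N;0})$, and concludes $w(t,\varphi(t,x))=w(0,x)=0$ along characteristics.

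Your approach is heavier (it reuses the semigroup machinery rather than elementary ODE theory) but it treats the viscous term honestly and stays entirely within the analytic framework already set up for the well-posedness proof; it also avoids having to build the global flow map. The paper's characteristics argument is shorter but, with the missing $\nu\Delta w$ term restored, would need to be replaced by a maximum-principle argument for the linear parabolic equation. Your verification of $\Div\big(\Delta^{-1}\nabla Q(u)\big)=Q(u)$ via ``harmonic and decaying implies zero'' is exactly equivalent to the paper's one-line remark that $\Delta^{-1}$ commutes with $\partial_j$.
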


\begin{Rem}\label{rem:NS-modified}
In what follows we prove that equation \eqref{eq:NS-modified} is locally well-posed in $\A^{m,p}_{N;0}$, i.e.
for any $u_0\in\A^{m,p}_{N;0}$ there exist $T>0$ and a unique solution 
$u\in C\big([0,T],\A^{m,p}_{N;0}\big)\cap C^1\big([0,T],\A^{m-2,p}_{N;0}\big)$ that depends continuously on the initial data 
$u_0\in\A^{m,p}_{N;0}$. By Proposition \ref{prop:NS-modified} we will then conclude that for a divergence free $u_0$
the solution $u$ stays divergent free for all $t\in[0,T]$, 
$u\in C\big([0,T],\accentset{\,\,\,\circ}\A^{m,p}_{N;0}\big)\cap C^1\big([0,T],\accentset{\,\,\,\circ}\A^{m-2,p}_{N;0}\big)$, and solves 
the Navier-Stokes equation.
\end{Rem}

\begin{proof}[Proof of Proposition \ref{prop:NS-modified}]
It remains to prove only the direct implication. Without loss of generality we will assume in \eqref{AW-expansion3} that 
$0<\gamma_0+(d/p)<1$; the case when $\gamma_0+(d/p)=0$ is treated in the same way as in \cite[Proposition C.1]{McOwenTopalov4}.
Let
\begin{equation}\label{eq:u-regularity}
u\in C\big([0,T],\A^{m,p}_{N;0}\big)\cap C^1\big([0,T],\A^{m-2,p}_{N;0}\big)
\end{equation}
be a solution of \eqref{eq:NS-modified} with initial data $u_0\in\accentset{\,\,\,\circ}\A^{m,p}_{N;0}$.
By applying the divergence operator $\Div$ to both sides of \eqref{eq:NS-modified} and then using
\eqref{eq:trick2} and the fact that $\Delta^{-1}$ commutes with the partial derivative $\partial/\partial x_j$ for
$1\le j\le d$ we conclude that
\begin{equation}\label{eq:div-equation}
(\Div u)_t+u\cdot\nabla (\Div u)=0.
\end{equation}
In view of \eqref{eq:u-regularity} and the Sobolev embedding $\A^{m,p}_{0,N;0}\subseteq  C^2$ we conclude that 
$u\in C^1\big([0,T]\times\R^d,\R^d\big)$. Moreover, since $[du]\in C\big([0,T],\A^{m-1,p}_{1,N+1;-1}\big)$ one 
sees from the definition of the asymptotic space $\A^{m-1,p}_{1,N+1;-1}$ that the (uniform) matrix norm of 
the Jacobi matrix $[du](t,x)$ is bounded uniformly in $(t,x)\in[0,T]\times\R^d$, i.e. $\sup_{[0,T]\times\R^d}\big|[du]\big|\le L$ 
for some positive constant $L>0$. 
This implies that the vector field $u$ is globally Lipschitz on $\R^d$ uniformly in $t\in[0,T]$.
Hence, for any $t\in[0,T]$ there is a uniquely defined diffeomorphism $\varphi(t,\cdot) : \R^d\to\R^d$ so that
$\varphi\in C^1\big([0,T]\times\R^d,\R^d\big)$,
\[
\varphi_t=u\big(t,\varphi(t,x)\big),
\]
and $\varphi(0,x)=x$ for any $x\in\R^d$.
This together with \eqref{eq:div-equation} then implies that for any $t\in[0,T]$ and for any $x\in\R^d$,
\[
(\Div u)\big(t,\varphi(t,x)\big)=(\Div u_0)(x)=0.
\]
Since for any $t\in[0,T]$ the map $\varphi(t,\cdot) : \R^d\to\R^d$ is a diffeomorphism, we conclude from the equality above that $\Div u(t)=0$ 
for any $t\in[0,T]$. Hence, for any $t\in[0,T]$ the vector field $u(t)$ is divergence free and 
$u\in C\big([0,T],\accentset{\,\,\,\circ}\A^{m,p}_{N;0}\big)\cap C^1\big([0,T],\accentset{\,\,\,\circ}\A^{m-1,p}_{N;0}\big)$. 
As above (cf. \eqref{eq:trick5} and \eqref{eq:trick6}) we obtain from Proposition \ref{pr:A-properties} and Proposition \ref{pr:A-products} that
pointwise in $t\in[0,T]$,
\[
Q(u)\in\A^{m-1,p}_{2,N+2;-2}\subseteq\A^{m-2,p}_{2,N+2;-2}\quad\text{\rm and}\quad
\Delta^{-1}\circ\nabla\circ Q(u)\in\A^{m,p}_{1,N+1;-1}\,.
\]
By setting 
\begin{equation}\label{eq:p-asymptotics}
\p(t):=\Delta^{-1}\circ Q(u(t))\in{\widehat\A}^{m,p}_{0,N;0}
\end{equation} 
where $\Delta^{-1}$ is the inverse of the map $\Delta : {\widehat\A}^{m,p}_{0,N;0}\to\A^{m-2,p}_{2,N+2;-2}$ considered in 
\cite[Remark 3.3]{McOwenTopalov3} we conclude that $u$ is a solution of the Navier-Stokes equation.
\end{proof}

Now, we re-write \eqref{eq:NS-modified} in the form
\begin{equation}\label{eq:NS-modified'}
u_t=\nu\Delta u+F(u)
\end{equation}
where
\begin{equation}\label{eq:NS-modified'_nonlinearity}
F(u):=\Delta^{-1}\circ\nabla\circ Q(u)-u\cdot\nabla u.
\end{equation}
As discussed above, Proposition \ref{pr:A-properties} and Proposition \ref{pr:A-products} imply that
$u\cdot\nabla u\in\A^{m-1,p}_{1,N+1;-1}$ and 
$\Delta^{-1}\circ\nabla\circ Q(u)\in{\widehat\A}^{m,p}_{1,N+1;-1}\subseteq \A^{m,p}_{1,N+1;0}$
for $u\in\A^{m,p}_{N;0}$. Moreover, it follows from \eqref{eq:trick5}, Proposition \ref{pr:A-products}, and
the fact that \eqref{eq:Delta_isomorphism} is an isomorphism, that the map
\begin{equation}\label{eq:F}
F : \A^{m,p}_{N;0}\to\A^{m-1,p}_{1,N+1;0}
\end{equation}
is analytic. In particular, we see that \eqref{eq:F} is locally Lipschitz. 
We will first prove the following stronger statement. 

\begin{Lem}\label{lem:lipschitz}
Assume that $m>2+\frac{d}{p}$, $N\in\Z_{\ge 0}$, and $1<p<\infty$.
There exists a positive constant $\kappa>0$ such that for any $\rho>0$ and for any $u,v\in B_{\A^{m,p}_{N;0}}(\rho)$,
\[
\|F(u)-F(v)\|_{\A^{m-1,p}_{1,N+1;0}}\le \kappa\rho\|u-v\|_{\A^{m,p}_{N;0}}.
\]
\end{Lem}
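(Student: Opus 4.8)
The plan is to use that $F$ is a homogeneous quadratic map, so that $F(u)-F(v)$ is bilinear and each of its terms carries exactly one factor $u-v$ and one factor whose norm is controlled by $\rho$. To make this precise I would polarize $F$ by introducing the symmetric bilinear map
\[
B(u,w):=\Delta^{-1}\circ\nabla\circ\tr\big([du][dw]\big)
-\tfrac{1}{2}\big(u\cdot\nabla w+w\cdot\nabla u\big),
\]
where $[du]$ denotes the Jacobi matrix of $u$. Since $\tr\big([du]^2\big)=Q(u)$ we have $B(u,u)=F(u)$, and $B$ is symmetric because $\tr\big([du][dw]\big)=\tr\big([dw][du]\big)$. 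A one-line bilinear computation then gives
\[
F(u)-F(v)=B(u,u)-B(v,v)=B(u-v,u+v).
\]
Hence, if $B$ extends to a bounded symmetric bilinear map $\A^{m,p}_{N;0}\times\A^{m,p}_{N;0}\to\A^{m-1,p}_{1,N+1;0}$ with operator norm $\|B\|$, then for $u,v\in B_{\A^{m,p}_{N;0}}(\rho)$ we obtain
\[
\|F(u)-F(v)\|_{\A^{m-1,p}_{1,N+1;0}}\le\|B\|\,\|u-v\|_{\A^{m,p}_{N;0}}\,\|u+v\|_{\A^{m,p}_{N;0}}
\le 2\|B\|\,\rho\,\|u-v\|_{\A^{m,p}_{N;0}},
\]
so that $\kappa:=2\|B\|$ works.

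It therefore remains to establish the boundedness of $B$, which I would do by treating its two summands separately and tracking the asymptotic indices $(n,N;\ell)$ through the scale of spaces. For the local term, Proposition \ref{pr:A-properties} yields that $\nabla$ maps $\A^{m,p}_{N;0}$ boundedly into $\A^{m-1,p}_{1,N+1;-1}$, while Proposition \ref{pr:A-products} provides a bounded pointwise product $\A^{m,p}_{0,N;0}\times\A^{m-1,p}_{1,N+1;-1}\to\A^{m-1,p}_{1,N+1;-1}$ (here the condition $m>2+\frac{d}{p}$ guarantees the Sobolev embedding that makes the relevant spaces closed under multiplication). Thus $u\cdot\nabla w$ and $w\cdot\nabla u$ are bounded by $C\|u\|_{\A^{m,p}_{N;0}}\|w\|_{\A^{m,p}_{N;0}}$ in $\A^{m-1,p}_{1,N+1;-1}\subseteq\A^{m-1,p}_{1,N+1;0}$. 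For the nonlocal term I would first bound the quadratic expression $\tr\big([du][dw]\big)$: since $[du],[dw]\in\A^{m-1,p}_{1,N+1;-1}$, Proposition \ref{pr:A-products} gives $\tr\big([du][dw]\big)\in\A^{m-1,p}_{2,N+2;-2}$ with norm $\le C\|u\|_{\A^{m,p}_{N;0}}\|w\|_{\A^{m,p}_{N;0}}$; applying $\nabla$ lands in $\A^{m-2,p}_{3,N+3;-3}$, and finally the inverse of the isomorphism \eqref{eq:Delta_isomorphism} maps this boundedly into ${\widehat\A}^{m,p}_{1,N+1;-1}\subseteq\A^{m,p}_{1,N+1;0}\subseteq\A^{m-1,p}_{1,N+1;0}$. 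Adding the two contributions gives the desired bound on $\|B\|$.

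The main obstacle is purely the bookkeeping of the asymptotic indices through the pointwise product, the gradient, and the elliptic inverse $\Delta^{-1}$: one has to check that each product estimate is a genuine instance of Proposition \ref{pr:A-products} with matching source and target spaces, and that the composition $\Delta^{-1}\circ\nabla$ indeed lands in the target $\A^{m-1,p}_{1,N+1;0}$ via \eqref{eq:Delta_isomorphism}. No new analytic input is required beyond the multiplication and differentiation properties of the asymptotic spaces and the elliptic isomorphism already recorded above; once these mapping properties are aligned, the estimate follows at once from the bilinear identity $F(u)-F(v)=B(u-v,u+v)$ together with $\|u+v\|_{\A^{m,p}_{N;0}}\le 2\rho$.
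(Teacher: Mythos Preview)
Your argument is correct. The bookkeeping through Propositions \ref{pr:A-properties} and \ref{pr:A-products} and the elliptic isomorphism \eqref{eq:Delta_isomorphism} is exactly what the paper uses, and the indices line up as you indicate.

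The only difference from the paper's proof is packaging: the paper computes the Fr\'echet differential $d_uF$, bounds $\|d_uF\|_{\LL(\A^{m,p}_{N;0},\A^{m-1,p}_{1,N+1;0})}\le\kappa\|u\|_{\A^{m,p}_{N;0}}$ by the same product and inversion estimates you invoke, and then appeals to the mean-value theorem. Since $F$ is homogeneous quadratic, $d_uF(\delta u)=2B(u,\delta u)$ in your notation, so the two bounds are literally the same inequality, and the mean-value theorem applied to a quadratic map reduces to your polarization identity $F(u)-F(v)=B(u-v,u+v)$. Your route is slightly more direct for this specific quadratic $F$; the paper's formulation via $d_uF$ would transfer unchanged to a more general (non-quadratic) locally Lipschitz nonlinearity.
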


\begin{Rem}\label{rem:lipschitz}
The map \eqref{eq:F} extends by analyticity to a polynomial map
\begin{equation}\label{eq:F-complexified}
F : \A^{m,p}_{N;0,\C}\to\A^{m-1,p}_{1,N+1;0,\C}
\end{equation}
where $X_\C$ denotes the compexification of a given real Banach space $X$. 
One easily sees from the proof below that Lemma \ref{lem:lipschitz} will also hold
for \eqref{eq:F-complexified} in the corresponding complexified spaces.
\end{Rem}

\begin{proof}[Proof of Lemma \ref{lem:lipschitz}]
We write $F(u)=F_1(u)+F_2(u)$ where $F_1(u):=-u\cdot\nabla u$ and $F_2(u):=\Delta^{-1}\circ\nabla\circ Q(u)$.
By Proposition \ref{pr:A-properties} and Proposition \ref{pr:A-products}, the map
\[
F_1 : \A^{m,p}_{N;0}\to\A^{m-1,p}_{1,N+1;-1}\subseteq \A^{m-1,p}_{1,N+1;0}
\]
is well-defined and analytic. The differential of this map at a given $u\in\A^{m,p}_{N;0}$ is
\[
d_uF_1 : \A^{m,p}_{N;0}\to\A^{m-1,p}_{1,N+1;0},\quad \delta u\mapsto u\cdot\nabla(\delta u)+(\delta u)\cdot\nabla u.
\]
Using again Proposition \ref{pr:A-properties} and Proposition \ref{pr:A-products} (ii) we estimate
\begin{eqnarray}
\|(d_uF_1)(\delta u)\|_{\A^{m-1,p}_{1,N+1;0}}&\le&\kappa_1\|u\|_{\A^{m,p}_{N;0}}\|\nabla(\delta u)\|_{\A^{m-1,p}_{1,N+1;-1}}+
\kappa_2\|\delta u\|_{\A^{m,p}_{N;0}}\|\nabla u\|_{\A^{m-1,p}_{1,N+1;-1}}\nonumber\\
&\le& \kappa'\|u\|_{\A^{m,p}_{N;0}}\|\delta u\|_{\A^{m,p}_{N;0}}\label{eq:lipschitz1}
\end{eqnarray}
where $\kappa_1$, $\kappa_2$, and $\kappa'$ are positive constants independent of the choice of $u,\delta u\in\A^{m,p}_{N;0}$.
By arguing in a similar way and by using the continuity of the inverse of \eqref{eq:Delta_isomorphism}, one proves that
\begin{equation}\label{eq:lipschitz2}
\|(d_uF_2)(\delta u)\|_{\A^{m-1,p}_{1,N+1;0}}\le\kappa''\|u\|_{\A^{m,p}_{N;0}}\|\delta u\|_{\A^{m,p}_{N;0}},
\end{equation}
where $\kappa''>0$ is independent of the choice of $u,\delta u\in\A^{m,p}_{N;0}$. By combining \eqref{eq:lipschitz1} and \eqref{eq:lipschitz2} we
obtain that there exists a positive constant $\kappa>0$ so that for any $u\in\A^{m,p}_{N;0}$,
\begin{equation}\label{eq:lipschitz3}
\|d_u F\|_{\mathcal{L}(\A^{m,p}_{N;0},\A^{m-1,p}_{1,N+1;0})}\le \kappa\|u\|_{\A^{m,p}_{N;0}}
\end{equation}
where $\|\cdot\|_{\mathcal{L}(X,Y)}$ denotes the uniform norm in the space of bounded linear maps ${\mathcal{L}(X,Y)}$ between
two Banach spaces $X$ and $Y$.
Finally, the statement of the lemma follows from \eqref{eq:lipschitz3} by standard arguments involving the mean-value theorem.
\end{proof}

We are now ready to prove Theorem \ref{th:NS}. In fact, we will first prove a stronger result.
Recall from Remark \ref{rem:lipschitz} that $X_\C$ denotes the complexification of a given real Banach space $X$.
For a given $T>0$ consider the conic set $\bbS_{\vartheta,T}$ in $\C$ as defined in \eqref{eq:conic_sets}. 
We will say that $u\in C\big(\bbS_{\vartheta,T},X_\C\big)$ is {\em continuous at zero} if it extends to
a continuous map $\bbS_{\vartheta,T}\cup\{0\}\to X_\C$. For the simplicity of notation we will denote the extended map 
by the same letter $u\in C\big(\bbS_{\vartheta,T}\cup\{0\},X_\C\big)$.

\begin{Def}\label{def:NS-holomorphic_solution}
A holomorphic map $u\in C\big(\bbS_{\vartheta,T},\accentset{\,\,\,\circ}{\A}^{m,p}_{N;0,\C}\big)$
that is continuous at zero and $u(0)=u_0$ is called a {\em holomorphic solution of the Navier-Stokes equation on $\bbS_{\vartheta,T}$} if
\[
u|_{[0,T)}\in C\big([0,T),\accentset{\,\,\,\circ}{\A}^{m,p}_{N;0}\big)\cap C^1\big([0,T),\accentset{\,\,\,\circ}{\A}^{m-2,p}_{N;0}\big)
\]
is a solution of the Navier-Stokes equation \eqref{eq:NS} (see Remark \ref{rem:definition_solution}).
\end{Def}

\noindent Recall that $C_b(D,Y)$, where $D$ is a metric space and $Y$ is a Banach space, denotes the Banach space of 
bounded continuous maps $D\to Y$ equipped with the $\sup$-norm. We have the following theorem.

\begin{Th}\label{th:NS-complex}
Assume that $m>2+\frac{d}{p}$, $N\in\Z_{\ge 0}$, and $1<p<\infty$. Then, for any $\rho>0$ and 
for any choice of the angle $\vartheta\in(0,\pi/2)$ there exists $T\equiv T(\rho,\vartheta)>0$ so that 
for any divergence free vector field $u_0\in B^{m,p}_{N;0}(\rho)$ there exists a holomorphic solution of 
the Navier-Stokes equation on $\bbS_{\vartheta,T}$, $u\in C\big(\bbS_{\vartheta,T},\accentset{\,\,\,\circ}{\A}^{m,p}_{N;0,\C}\big)$,
that is bounded and depends Lipschitz continuously on the initial data in the sense that the data-to-solution map, $u_0\mapsto u$, 
$B^{m,p}_{N;0}(\rho)\cap\accentset{\,\,\,\circ}{\A}^{m,p}_{N;0}\to C_b\big(\bbS_{\vartheta,T},\accentset{\,\,\,\circ}{\A}^{m,p}_{N;0,\C}\big)$,
is Lipschitz continuous.
\end{Th}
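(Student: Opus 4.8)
The plan is to recast the modified Navier-Stokes equation \eqref{eq:NS-modified'} as a fixed-point problem for its Duhamel (mild) formulation on the complex sector and to apply the contraction mapping principle in a space of bounded holomorphic curves. Writing $S(z)$ for the analytic semigroup generated by $\Delta|_{\A^{m,p}_{N;0}}$ (Theorem \ref{th:SonA}) and its complexification, I would seek the solution on $\bbS_{\vartheta,T}$ as a fixed point of
\[
(\Phi u)(z):=S(z)u_0+\int_0^z S(z-\zeta)\,F\big(u(\zeta)\big)\,d\zeta,\qquad z\in\bbS_{\vartheta,T},
\]
where the integral is taken along the straight segment from $0$ to $z$ (parametrized by $\zeta=sz$, $s\in[0,1]$) and $F$ is the analytic nonlinearity \eqref{eq:NS-modified'_nonlinearity}, complexified as in Remark \ref{rem:lipschitz}. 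The crucial structural point is the exact matching of indices: by Lemma \ref{lem:lipschitz} the map $F$ sends $\A^{m,p}_{N;0,\C}$ into $\A^{m-1,p}_{1,N+1;0,\C}$ (losing one derivative but gaining one order of decay and of expansion), while Corollary \ref{coro:S(t)-estimatesA*} with $\tau=1$ gives
\[
\big\|S(z)\big\|_{\LL(\A^{m-1,p}_{1,N+1;0,\C},\,\A^{m,p}_{N;0,\C})}\le C\,e^{\omega|z|}\max\big(1,1/|z|^{1/2}\big),
\]
so that $S(z-\zeta)$ recovers precisely the derivative lost by $F$, at the integrable cost $|z|^{-1/2}$.

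First I would fix $\omega=1$ and the angle $\vartheta\in(0,\pi/2)$ and work in the Banach space $\mathcal{X}_T$ of bounded holomorphic maps $\bbS_{\vartheta,T}\to\A^{m,p}_{N;0,\C}$ that are continuous at zero, equipped with the sup-norm. Using $F(0)=0$, the complexified Lemma \ref{lem:lipschitz}, the estimate above, and $\int_0^1(1-s)^{-1/2}\,ds=2$, I would show that on the closed ball $B_R\subseteq\mathcal{X}_T$,
\[
\sup_{z}\big\|(\Phi u)(z)\big\|_{\A^{m,p}_{N;0}}\le C e^{T}\rho+C' e^{T}\kappa R^2\big(T^{1/2}+T\big),
\]
and, for $u,v\in B_R$,
\[
\sup_{z}\big\|(\Phi u)(z)-(\Phi v)(z)\big\|_{\A^{m,p}_{N;0}}\le C'' e^{T}\kappa R\big(T^{1/2}+T\big)\,\sup_z\big\|u(z)-v(z)\big\|_{\A^{m,p}_{N;0}}.
\]
Choosing $R:=4C\rho$ (so that $Ce^{T}\rho\le R/2$ for $T$ small) and then $T\equiv T(\rho,\vartheta)>0$ small enough that the quadratic term is $\le R/2$ and the contraction factor is $\le 1/2$, the contraction mapping principle yields a unique fixed point $u\in B_R$.

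To see that $\Phi$ preserves $\mathcal{X}_T$ I would write $(\Phi u)(z)=S(z)u_0+z\int_0^1 S((1-s)z)F(u(sz))\,ds$: the first summand is holomorphic by Corollary \ref{coro:S(t)-estimatesA}, and the integral is the locally uniform limit on compact subsets of $\bbS_{\vartheta,T}$ of the holomorphic truncations $z\int_0^{1-\epsilon}S((1-s)z)F(u(sz))\,ds$, the convergence being controlled by the integrable singularity $(1-s)^{-1/2}$; hence the fixed point is holomorphic, and continuity at zero with $u(0)=u_0$ follows from the strong continuity of $S(z)$ as $z\to0$ together with the $O(|z|^{1/2})$ bound on the integral term. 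On the open sector, holomorphy together with the integral equation shows that $u$ is a classical solution of $u_z=\Delta u+F(u)$; restricting to real $t\in[0,T)$ gives a solution of \eqref{eq:NS-modified} with $u\in C\big([0,T),\A^{m,p}_{N;0}\big)\cap C^1\big([0,T),\A^{m-2,p}_{N;0}\big)$, the loss of two derivatives being absorbed by the boundedness of $\Delta:\A^{m,p}_{N;0}\to\A^{m-2,p}_{N;0}$ and the inclusion $\A^{m-1,p}_{1,N+1;0}\subseteq\A^{m-2,p}_{N;0}$, which give continuity of $u_t$ up to $t=0$. Since $u_0$ is divergence free, Proposition \ref{prop:NS-modified} forces $\Div u(t)=0$ on $[0,T)$ and identifies $u$ as a genuine solution of \eqref{eq:NS}; because $z\mapsto\Div u(z)$ is holomorphic and vanishes on $[0,T)$, the identity theorem extends $\Div u\equiv0$ to all of $\bbS_{\vartheta,T}$, so $u$ takes values in $\accentset{\,\,\,\circ}{\A}^{m,p}_{N;0,\C}$. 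Lipschitz dependence on the data would then follow by subtracting the integral equations for two initial data $u_0,\tilde u_0$ and absorbing the quadratic term exactly as in the contraction estimate, yielding $\|u-\tilde u\|_{C_b}\le 2Ce^{T}\|u_0-\tilde u_0\|_{\A^{m,p}_{N;0}}$.

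The main obstacle I expect is the justification of holomorphy and of continuity at zero of the Duhamel term across the whole sector: the smoothing estimate carries the singular factor $|z|^{-1/2}$ of $S(z-\zeta)$ as $\zeta\to z$, so one must argue path-independence of the complex contour integral (via holomorphy of the integrand off the endpoint) and pass holomorphy through this integrable singularity, rather than relying on a naive differentiation under the integral sign. The remaining steps—the contraction estimates and the passage from the mild to the classical divergence-free solution—are routine once Corollary \ref{coro:S(t)-estimatesA*}, Lemma \ref{lem:lipschitz}, and Proposition \ref{prop:NS-modified} are in hand.
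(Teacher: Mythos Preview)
Your proposal is correct and follows essentially the same route as the paper: Duhamel formulation of \eqref{eq:NS-modified'}, contraction in a ball of bounded holomorphic curves using the smoothing estimate of Corollary \ref{coro:S(t)-estimatesA*} with $\tau=1$ against the one-derivative loss in Lemma \ref{lem:lipschitz}, and Proposition \ref{prop:NS-modified} to recover the divergence-free condition. The only notable differences are cosmetic: the paper proves holomorphy of the Duhamel integral via a two-variable auxiliary map $\widetilde w(z,\tz)$ (Lemma \ref{lem:T-map}) rather than your $\int_0^{1-\epsilon}$ truncation, and it appeals to Lemma \ref{lem:T-map_real} to verify the differential equation on the real segment rather than deriving it directly on the sector; your identity-theorem argument to propagate $\Div u=0$ from $[0,T)$ to all of $\bbS_{\vartheta,T}$ is in fact a point the paper leaves implicit.
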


\begin{proof}[Proof of Theorem \ref{th:NS-complex}]
Assume that $m>2+\frac{d}{p}$, $N\in\Z_{\ge 0}$, and $1<p<\infty$.
The proof of the theorem will follow from Proposition \ref{prop:NS-modified} and a fixed point argument for the parabolic equation 
\eqref{eq:NS-modified'} that involves the Duhamel's formula, Theorem  \ref{th:SonA}, Corollary \ref{coro:S(t)-estimatesA}, 
Corollary \ref{coro:S(t)-estimatesA*}, and Lemma \ref{lem:lipschitz} (Remark \ref{rem:lipschitz}). 
Indeed, let us take $\rho, T_0>0$ and then choose $0<T<T_0$ and $\vartheta\in(0,\pi/2)$. 
For the simplicity of the exposition we will assume that $0<T_0<1$.
It follows from Corollary \ref{coro:S(t)-estimatesA} that there exists $M\equiv M(T_0,\nu)>0$ such that for 
any $v\in\A^{m,p}_{N;0,\C}$ and $z\in\bbS_{\vartheta,T}$,
\begin{equation}\label{eq:S(t)-estimate1}
\|S_\nu(z)v\|_{\A^{m,p}_{N;0,\C}}\le M\,\|v\|_{\A^{m,p}_{N;0,\C}}
\end{equation}
and the map $S_\nu(z) : \bbS_{\vartheta,T}\to\LL(\A^{m,p}_{N;0,\C})$ is holomorphic.
Similarly, it follows from Corollary \ref{coro:S(t)-estimatesA*} and the assumption $0<T_0<1$ that there exists a constant 
$C\equiv C(T_0,\nu)>0$ such that for any $v\in\A^{m-1,p}_{1,N+1;0,\C}$ and $z\in\bbS_{\vartheta,T}$,
\begin{equation}\label{eq:S(t)-estimate2}
\|S_\nu(z)v\|_{\A^{m,p}_{N;0,\C}}\le\frac{C}{\sqrt{|z|}}\,\|v\|_{\A^{m-1,p}_{1,N+1;0,\C}}
\end{equation}
and the map $S_\nu(z) : \bbS_{\vartheta,T}\to\LL(\A^{m-1,p}_{1,N+1;0,\C},\A^{m,p}_{N;0,\C})$ is holomorphic.
Now, take $u_0\in B_{\A^{m,p}_{N;0}}(\rho)$ and consider the {\em closed} subset
\[
{\mathcal B}_{\rho,T,u_0}^m\!\!:=\Big\{u : \bbS_{\vartheta,T}\to\A^{m,p}_{N;0,\C}\,\Big|\, u\,\,\text{is holomorphic},
\text{\rm continuous at zero}, u(0)=u_0, |u|_{\vartheta,T}\le M\rho+1\Big\}
\]
of the Banach space $C_b\big(\bbS_{\vartheta,T},\A^{m,p}_{N;0,\C}\big)$ equipped with the norm
$|u|_{\vartheta,T}:=\sup_{z\in\bbS_{\vartheta,T}}\|u(z)\|_{\A^{m,p}_{N;0,\C}}$ where $M>0$ is the constant appearing
in \eqref{eq:S(t)-estimate1}.
For any $u\in{\mathcal B}_{\rho,T,u_0}^m$ define the non-linear transform suggested by the Duhamel's principle
\begin{equation}\label{eq:T}
{\mathcal T}(u)(z):=S_\nu(z)u_0+\int_0^zS_\nu(z-\lambda)F\big(u(\lambda)\big)\,d\lambda,\quad z\in\bbS_{\vartheta,T},
\end{equation}
where the integration is taken over the (straight) segment $[0,z]\subseteq\bbS_{\vartheta,T}$.
It follows from \eqref{eq:S(t)-estimate1}, \eqref{eq:S(t)-estimate2}, Corollary \ref{coro:S(t)-estimatesA}, 
and Corollary \ref{coro:S(t)-estimatesA*}, that ${\mathcal T}(u) : \bbS_{\vartheta,T}\to\A^{m,p}_{N;0,\C}$ is 
continuous at zero bounded holomorphic map such that ${\mathcal T}(u)(0)=u_0$ (see Lemma \ref{lem:T-map} below). 
Moreover, by \eqref{eq:S(t)-estimate1}, \eqref{eq:S(t)-estimate2}, and Lemma \ref{lem:lipschitz} (Remark \ref{rem:lipschitz}),
for any given $u\in{\mathcal B}_{\rho,T,u_0}^m$ and $z\in\bbS_{\vartheta,T}$,
\begin{eqnarray}
\big\|{\mathcal T}(u)(z)\big\|_{\A^{m,p}_{N;0,\C}}&\le&\big\|S_\nu(z)u_0\big\|_{\A^{m,p}_{N;0,\C}}+
\int_0^z\big\|S_\nu(z-\lambda)F\big(u(\lambda)\big)\big\|_{\A^{m,p}_{N;0,\C}}\,|d\lambda|\nonumber\\
&\le&M\rho+C\Big(\int_0^z\frac{|d\lambda|}{\sqrt{|z-\lambda|}}\Big)\,
\sup_{\lambda\in\bbS_{\vartheta,T}}\big\|F(u(\lambda))\big\|_{\A^{m-1,p}_{1,N+1;0,\C}}\nonumber\\
&\le&M\rho+C\kappa\rho\Big(\int_0^z\frac{|d\lambda|}{\sqrt{|z-\lambda|}}\Big)\,
\sup_{\lambda\in\bbS_{\vartheta,T}}\|u(\lambda)\|_{\A^{m,p}_{N;0,\C}}\nonumber\\
&\le&M\rho+2C\kappa\rho\big(M\rho+1\big)\sqrt{|z|}\,.\label{eq:T-estimate1}
\end{eqnarray}
By choosing
\begin{equation}\label{eq:time_of_existence}
0<T<\min\Big\{T_0,1\big/4C^2\kappa^2\rho^2\big(M\rho+1\big)^2\Big\}
\end{equation} 
we then conclude from \eqref{eq:T-estimate1} that ${\mathcal T}(u)\in{\mathcal B}_{\rho,T,u_0}^m$. 
Hence, the transformation \eqref{eq:T} preserves the set ${\mathcal B}_{\rho,T,u_0}^m$,
defining this way a map 
\begin{equation}\label{eq:T-map}
{\mathcal T} : {\mathcal B}_{\rho,T,u_0}^m\to{\mathcal B}_{\rho,T,u_0}^m.
\end{equation}
We will show that \eqref{eq:T-map} is a contraction. In fact, by arguing in the same way as above 
and by using \eqref{eq:S(t)-estimate1}, \eqref{eq:S(t)-estimate2}, and Lemma \ref{lem:lipschitz} (Remark \ref{rem:lipschitz}), we obtain that
for any $u_1,u_2\in{\mathcal B}_{\rho,T,u_0}^m$ and for any $z\in\bbS_{\vartheta,T}$,
\begin{eqnarray}
\big\|{\mathcal T}(u_1)(z)-{\mathcal T}(u_2)(z)\big\|_{\A^{m,p}_{N;0,\C}}&\le& 
C\Big(\int_0^z\frac{|d\lambda|}{\sqrt{|z-\lambda|}}\Big)\,\sup_{\bbS_{\vartheta,T}}\big\|F(u_1)-F(u_2)\big\|_{\A^{m-1,p}_{1,N+1;0,\C}}\nonumber\\
&\le&C\kappa\rho\Big(\int_0^z\frac{|d\lambda|}{\sqrt{|z-\lambda|}}\Big)
\sup_{\bbS_{\vartheta,T}}\|u_1-u_2\|_{\A^{m,p}_{N;0,\C}}\nonumber\\
&\le&2C\kappa\rho\sqrt{|z|}\,\sup_{\bbS_{\vartheta,T}}\|u_1-u_2\|_{\A^{m,p}_{N;0,\C}}.\label{eq:T-lipschitz}
\end{eqnarray}
By the estimate \eqref{eq:time_of_existence}, the coefficient $\alpha:=2C\kappa\rho\sqrt{T}<1$. Hence, \eqref{eq:T-map} is
a contraction in ${\mathcal B}_{\rho,T,u_0}^m$. By the Banach contraction mapping theorem, there exists a unique $u\in{\mathcal B}_{\rho,T,u_0}^m$ 
so that $u={\mathcal T}(u)$. Hence, there exists a unique solution $u\in{\mathcal B}_{\rho,T,u_0}^m$ of the integral equation
\begin{equation}\label{eq:mild_solution}
u(z)=S_\nu(z)u_0+\int_0^z S_\nu(z-\lambda)F\big(u(\lambda)\big)\,d\lambda,\quad z\in\bbS_{\vartheta,T}.
\end{equation}
Now, take $u_0,v_0\in B_{\A^{m,p}_{N;0}}(\rho)$ and let $u\in{\mathcal B}_{\rho,T,u_0}^m$ and $v\in{\mathcal B}_{\rho,T,v_0}^m$ be
the solutions of \eqref{eq:mild_solution} so that $u(0)=u_0$ and $v(0)=v_0$. Then, by arguing as in the proof of \eqref{eq:T-lipschitz} we estimate
\[
\big\|u(z)-v(z)\big\|_{\A^{m,p}_{N;0,\C}}\le M\|u_0-v_0\|_{\A^{m,p}_{N;0,\C}}+\alpha\sup_{\bbS_{\vartheta,T}}\|u-v\|_{\A^{m,p}_{N;0,\C}},
\quad z\in\bbS_{\vartheta,T},
\]
where the constant $0<\alpha<1$ is chosen above. This implies that
\begin{equation}\label{eq:lipschitz_continuity}
\sup_{\bbS_{\vartheta,T}}\|u-v\|_{\A^{m,p}_{N;0,\C}}\le L\|u_0-v_0\|_{\A^{m,p}_{N;0,\C}},\quad\text{where}\quad L:=M/(1-\alpha).
\end{equation}
Therefore, the solution of \eqref{eq:mild_solution} depends Lipschitz continuously on $u_0\in B_{\A^{m,p}_{N;0}}(\rho)$ in 
the space $C_b\big(\bbS_{\vartheta,T},\A^{m,p}_{N;0,\C}\big)$.
This also implies that the solution of \eqref{eq:mild_solution} is unique not only in ${\mathcal B}_{\rho,T,u_0}^m$ but also
in $C_b\big(\bbS_{\vartheta,T},\A^{m,p}_{N;0,\C}\big)$.
It follows from Lemma \ref{lem:T-map} below that the (complex) derivative $u_z : \bbS_{\vartheta,T}\to\A^{m,p}_{N;0,\C}$ satisfies
\begin{equation}\label{eq:NS-complex}
u_z=\nu\Delta u+F(u),\quad u|_{z=0}=u_0\,.
\end{equation}
In particular, we see from \eqref{eq:NS-complex} that $u_z : \bbS_{\vartheta,T}\to\A^{m-2,p}_{N;0,\C}$
extends to a continuous map $\bbS_{\vartheta,T}\cup\{0\}\to\A^{m-2,p}_{N;0,\C}$.
By restricting the variable $z$ to the interval $[0,T)$ we then conclude that
$u|_{[0,T)}\in C\big([0,T),\A^{m,p}_{N;0}\big)\cap C^1\big([0,T),\A^{m-2,p} _{0,N}\big)$ 
is a solution of \eqref{eq:NS-modified'} (and \eqref{eq:NS-modified}).
By combining this with Proposition \ref{prop:NS-modified} we see that 
$u|_{[0,T)}\in C\big([0,T),\accentset{\,\,\,\circ}\A^{m,p}_{N;0}\big)\cap C^1\big([0,T),\accentset{\,\,\,\circ}\A^{m-2,p} _{0,N}\big)$ 
is a solution of the Navier-Stokes equation \eqref{eq:NS}. This completes the proof of Theorem \ref{th:NS-complex}.
\end{proof}

The following important technical lemma is used in the proof of Theorem \ref{th:NS}.

\begin{Lem}\label{lem:T-map}
Assume that $m>2+\frac{d}{p}$, $N\in\Z_{\ge 0}$, and $1<p<\infty$.
Take $\vartheta\in(0,\pi/2)$ and $0<T<1$ and let $u :\bbS_{\vartheta,T}\to\A^{m,p}_{N;0,\C}$ be 
a continuous at zero bounded holomorphic map such that $u(0)=u_0$. Then, the transformation \eqref{eq:T} gives 
a holomorphic map $\mathcal{T}(u) : \bbS_{\vartheta,T}\to\A^{m,p}_{N;0,\C}$ that is bounded, continuous at zero, 
and $\mathcal{T}(u)(0)=u_0$. On $\bbS_{\vartheta,T}$ one has $\mathcal{T}(u)_z=\nu\Delta\mathcal{T}(u)+F(u)$ where $F$ is 
the map \eqref{eq:F-complexified}.
\end{Lem}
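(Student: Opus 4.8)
The plan is to analyze the two terms in the Duhamel formula \eqref{eq:T} separately, establishing for each of them boundedness, holomorphy on $\bbS_{\vartheta,T}$, and continuity at zero, and then to verify the differential equation $\mathcal{T}(u)_z=\nu\Delta\mathcal{T}(u)+F(u)$. The first term $S_\nu(z)u_0$ is already fully controlled by Theorem \ref{th:SonA} and Corollary \ref{coro:S(t)-estimatesA}: the semigroup is analytic of angle $\pi/2$, so $z\mapsto S_\nu(z)u_0$ is holomorphic on $\bbS_{\vartheta,T}$ with $\vartheta<\pi/2$, bounded by \eqref{eq:S(t)-estimate1}, and converges strongly to $u_0$ as $z\to 0$ in $\bbS_{\vartheta,T}\cup\{0\}$ by property (iii) of an analytic semigroup. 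So the work concentrates on the integral term
\[
G(z):=\int_0^z S_\nu(z-\lambda)F\big(u(\lambda)\big)\,d\lambda .
\]

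\medskip

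First I would establish that $G$ is well-defined and bounded. Since $u$ takes values in $\A^{m,p}_{N;0,\C}$ and $F$ is the polynomial map \eqref{eq:F-complexified}, the integrand $\lambda\mapsto F(u(\lambda))$ is a bounded continuous map into $\A^{m-1,p}_{1,N+1;0,\C}$, with sup-norm bounded via Lemma \ref{lem:lipschitz} (Remark \ref{rem:lipschitz}) by $\kappa\rho\cdot\sup\|u\|$. The smoothing estimate \eqref{eq:S(t)-estimate2} from Corollary \ref{coro:S(t)-estimatesA*} gives $\|S_\nu(z-\lambda)F(u(\lambda))\|_{\A^{m,p}_{N;0,\C}}\le C|z-\lambda|^{-1/2}\|F(u(\lambda))\|_{\A^{m-1,p}_{1,N+1;0,\C}}$, and the singularity $|z-\lambda|^{-1/2}$ is integrable along the segment $[0,z]$, yielding $\|G(z)\|\le 2C\kappa\rho(M\rho+1)\sqrt{|z|}$ exactly as in \eqref{eq:T-estimate1}. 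In particular $G(z)\to 0$ as $z\to 0$, which gives continuity at zero and $\mathcal{T}(u)(0)=u_0$.

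\medskip

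Next I would prove holomorphy of $G$ on the open sector $\bbS_{\vartheta,T}$. The clean way is to fix a point $z_0\in\bbS_{\vartheta,T}$, parametrize the segment as $\lambda=sz$ for $s\in[0,1]$, so that $G(z)=z\int_0^1 S_\nu\big((1-s)z\big)F\big(u(sz)\big)\,ds$, and differentiate under the integral sign in $z$. The holomorphy of $z\mapsto S_\nu(z)$ as an $\LL(\A^{m-1,p}_{1,N+1;0,\C},\A^{m,p}_{N;0,\C})$-valued map (Corollary \ref{coro:S(t)-estimatesA*}) together with the holomorphy of $z\mapsto F(u(z))$ — which follows since $u$ is holomorphic and $F$ is a polynomial (hence holomorphic) map — makes the integrand holomorphic in $z$ for each fixed $s$, and the uniform bounds from \eqref{eq:S(t)-estimate2} justify differentiation under the integral by the dominated convergence / Morera argument. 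Equivalently, one checks that against every functional $f\in\big(\A^{m,p}_{N;0,\C}\big)'$ the scalar function $z\mapsto\langle f,G(z)\rangle$ is holomorphic (via Morera's theorem on triangles, using the rescaled integral to avoid the moving endpoint) and then invokes the local boundedness just established together with \cite[Theorem 3.12, Ch. III]{Kato} to upgrade weak holomorphy to holomorphy.

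\medskip

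Finally I would verify the differential equation. This is the standard inhomogeneous-semigroup computation: writing $\mathcal{T}(u)(z)=S_\nu(z)u_0+G(z)$, differentiating the rescaled form of $G$ produces one boundary contribution $F(u(z))$ and one interior contribution $\int_0^z \nu\Delta S_\nu(z-\lambda)F(u(\lambda))\,d\lambda$, giving $\mathcal{T}(u)_z=\nu\Delta S_\nu(z)u_0+\nu\Delta G(z)+F(u(z))=\nu\Delta\mathcal{T}(u)+F(u)$. Here one uses that $\Delta$ is the generator (Theorem \ref{th:SonA}) so $\frac{d}{dz}S_\nu(z)=\nu\Delta S_\nu(z)$ on the domain, and the smoothing estimates of Corollary \ref{coro:S(t)-estimatesA} to give meaning to $\Delta$ applied to the integral. \textbf{The main obstacle} I anticipate is the holomorphy and differentiability of $G$ near the moving endpoint of the contour: because the upper limit of integration is the variable $z$ itself and the kernel $S_\nu(z-\lambda)$ is singular precisely at $\lambda=z$, one cannot naively differentiate under the integral with fixed limits. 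The rescaling $\lambda=sz$ that converts the moving endpoint into a fixed interval $[0,1]$, pushing all the $z$-dependence into the (now jointly holomorphic and uniformly integrable) integrand, is the device that resolves this and is the step requiring the most care.

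\finishproof
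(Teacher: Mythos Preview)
Your route to holomorphy of the Duhamel integral via the rescaling $\lambda=sz$ (so that $G(z)=z\int_0^1 S_\nu((1-s)z)F(u(sz))\,ds$ has fixed limits) is valid and is different from the paper's. The paper instead writes $w(z)=\int_0^z S_\nu(s)F(u(z-s))\,ds$, introduces the auxiliary two--variable function $w_k(z,\tz)=\int_{z_k}^{\tz} S_\nu(s)F(u(z-s))\,ds$ with both endpoints cut away from the singularities, shows it is holomorphic in $(z,\tz)$, and then passes through two successive uniform limits ($z_k\to 0$, then $\tz\to z$ along $\tz_k=(1-1/2k)z$). Your single rescaling is more economical; the paper's two--variable device is more explicit about which limiting operations are involved.

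There is, however, a gap in your treatment of the differential equation. You claim that ``differentiating the rescaled form of $G$ produces one boundary contribution $F(u(z))$ and one interior contribution $\int_0^z\nu\Delta S_\nu(z-\lambda)F(u(\lambda))\,d\lambda$'', but that is the output of the Leibniz rule applied to the \emph{unrescaled} integral with its moving singular endpoint --- precisely the computation you wanted to avoid. Differentiating the rescaled form $z\int_0^1 S_\nu((1-s)z)F(u(sz))\,ds$ in $z$ actually yields three terms, $G(z)/z$, a $\nu\Delta$--term with an extra factor $(1-s)$, and a term containing $u'(sz)$; the latter is not a priori controlled as $s\to 0$ since $u$ is only assumed continuous at the tip (one needs a Cauchy estimate $\|u'(w)\|\le C/|w|$), and even then the three terms combine into $\nu\Delta G+F(u)$ only after an integration by parts in $s$ which itself must be justified at the singular endpoint $s=1$. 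The paper sidesteps all of this: once holomorphy of $\mathcal{T}(u)$ is established, it verifies $\mathcal{T}(u)_t=\nu\Delta\mathcal{T}(u)+F(u)$ on the \emph{real} interval $(0,T)$ via the cutoff approximation of Lemma~\ref{lem:T-map_real}, and then observes that both $\mathcal{T}(u)_z$ and $\nu\Delta\mathcal{T}(u)+F(u)$ are holomorphic maps $\bbS_{\vartheta,T}\to\A^{m-2,p}_{N;0,\C}$ agreeing on $(0,T)$, hence everywhere by the identity theorem.
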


\begin{Rem}\label{rem:T-map}
In fact, the proof of Lemma \ref{lem:T-map} bellow shows that if $u :\bbS_{\vartheta,T}\to\A^{m,p}_{N;0,\C}$ depends on 
a parameter $\eta\in\U$, where $\U$ is an open set in a (complex) Banach space, so that
$u : \bbS_{\vartheta,T}\times \U\to\A^{m,p}_{N;0,\C}$, $(z,\eta)\mapsto  u(z;\eta)$, is analytic
then $\mathcal{T}(u) : \bbS_{\vartheta,T}\times\U\to\A^{m,p}_{N;0,\C}$ is analytic.
\end{Rem}

\begin{proof}[Proof of Lemma \ref{lem:T-map}]
Assume that $\|u_0\|_{\A^{m,p}_{N;0}}<\rho$ and $\sup_{z\in\bbS_{\vartheta,T}}\|u(z)\|_{\A^{m,p}_{N;0}}<\rho_1$ 
for some $\rho,\rho_1>0$. 
Recall from the proof of Theorem \ref{th:NS} that the inequalities \eqref{eq:S(t)-estimate1} and \eqref{eq:S(t)-estimate2} hold
on $\bbS_{\vartheta,T}$. The first term on the right side of \eqref{eq:T} is  holomorphic and continuous at zero as required 
since by Theorem \ref{th:SonA}, $\{S_\nu(t)\}_{t\ge 0}$ is an analytic semigroup on $\A^{m,p}_{N;0,\C}$ with angle $\pi/2$.
Let us now concentrate our attention to the second term in \eqref{eq:T},
\begin{equation}\label{eq:w}
w(z):=\int_0^zS_\nu(z-\lambda)F\big(u(\lambda)\big)\,d\lambda
=\int_0^zS_\nu(s)F\big(u(z-s)\big)\,ds,\quad z\in\bbS_{\vartheta,T},
\end{equation}
where the integration is taken over the (straight) segment $[0,z]\subseteq\bbS_{\vartheta,T}$.
It follows from \eqref{eq:F-complexified}, \eqref{eq:S(t)-estimate2}, and Corollary \ref{coro:S(t)-estimatesA*} (applied with $\tau=1$), 
that the first integral in \eqref{eq:w} is well defined as an improper Riemann integral in $\A^{m,p}_{N;0,\C}$ with 
an integrable singularity at $\lambda=z$. Similarly, the second integral in \eqref{eq:w} is well defined as an improper Riemann integral 
in $\A^{m,p}_{N;0,\C}$ with an integrable singularity at $s=0$.
Let us now fix $z_0\in\bbS_{\vartheta,T}$ and choose an open disk $U(z_0)$ in $\bbS_{\vartheta,T}$
centered at $z_0$. Then we choose $z,\tz\in\C$ such that
\begin{equation}\label{eq:z-condition}
z\in U(z_0)\subseteqq\bbS_{\vartheta,T}
\end{equation}
and
\begin{equation}\label{eq:tz-condition}
\tz\in\bbS_{\vartheta,T}\quad\text{\rm and}\quad z-\tz\in\bbS_{\vartheta,T},
\end{equation}
as well as a sequence of complex numbers $(z_k)_{k\ge 1}$ such that $z_k\to 0$ as $k\to\infty$ and
\begin{equation}\label{eq:z_k-condition}
z_k\in\bbS_{\vartheta,T}\quad\text{\rm and}\quad z-z_k\in\bbS_{\vartheta,T}\,\,\forall z\in U(z_0)
\end{equation}
for any $k\ge 1$. Note that conditions \eqref{eq:z-condition} and \eqref{eq:tz-condition} define an open $W$ set of pairs $(z,\tz)$ in $\C^2$.
The second condition in \eqref{eq:z_k-condition} can be satisfied by taking $z_k$, $k\ge 1$, sufficiently close to zero.
For $(z,\tz)\in W$ and $k\ge 1$ consider the auxiliary integral
\begin{equation}\label{eq:w_k}
w_k(z,\tz):=\int_{z_k}^\tz S_\nu(s)F\big(u(z-s)\big)\,ds
\end{equation}
where the integration is taken over the segment $[z_k,\tz]\subseteq\bbS_{\vartheta,T}$.  
It follows from \eqref{eq:z-condition}, \eqref{eq:tz-condition}, and \eqref{eq:z_k-condition}, that \eqref{eq:w_k} is well defined.
Moreover, it follows from \eqref{eq:F-complexified}, \eqref{eq:S(t)-estimate2}, and Corollary \ref{coro:S(t)-estimatesA*}, that 
for any $k\ge 1$, 
\begin{equation}\label{eq:w_k-map_local}
w_k : W\to\A^{m,p}_{N;0,\C},\quad (z,\tz)\mapsto w_k(z,\tz),
\end{equation}
is a holomorphic map for any choice of $z_0\in\bbS_{\vartheta,T}$.
(Note that the integrand in \eqref{eq:w_k} has no singularities in an open neighborhood of $[z_k,\tz]$ and depends 
(locally) holomorphically on $z$.)
It follows from \eqref{eq:w_k}, \eqref{eq:S(t)-estimate2}, Lemma \ref{lem:lipschitz} (Remark \ref{rem:lipschitz}), 
and the assumptions on $u$, that for any $(z,\tz)\in W$ and for any $k,l\ge 1$,
\begin{eqnarray}\label{eq:w_k-Cauchy_sequence}
\big\|w_k(z,\tz)-w_l(z,\tz)\big\|_{\A^{m,p}_{N;0,\C}}&=&\Big\|\int_{z_k}^{z_l}S_\nu(s)F\big(u(z-s)\big)\,ds\Big\|_{\A^{m,p}_{N;0,\C}}\nonumber\\
&\le&C\Big(\int_{z_k}^{z_l}\frac{|ds|}{\sqrt{|s|}}\Big)\sup_{s\in\bbS_{\vartheta,T}}\|F\big(u(s)\big)\|_{\A^{m-1,p}_{1,N+1;0,\C}}\nonumber\\
&\le&C\kappa\rho_1\Big(\int_{z_k}^{z_l}\frac{|ds|}{\sqrt{|s|}}\Big)\sup_{s\in\bbS_{\vartheta,T}}\|u(s)\|_{A^{m,p}_{N;0,\C}}\nonumber\\
&\le&\frac{2\,C\kappa\rho_1^2}{\cos\vartheta}\sqrt{|z_k-z_l|}
\end{eqnarray}
where the integration is taken over the segment $[z_k,z_l]$.
In \eqref{eq:w_k-Cauchy_sequence} we use that for $z_k\ne z_l$,
\begin{equation}\label{eq:integral_inequality}
\int_{z_k}^{z_l}\frac{|ds|}{\sqrt{|s|}}\le\int_0^1\frac{|z_k-z_l|\,dt}{\sqrt{t|z_k-z_l|}}\le
\frac{2\,\sqrt{|z_k-z_l|}}{\cos\vartheta},
\end{equation}
which follows from the fact that for any $0\le t\le 1$,
\begin{equation}\label{eq:integral_inequality'}
\big|z_k+t(z_l-z_k)\big|\ge\big|z_k'+t(z_l'-z_k')\big|\ge t\,|z_l'-z_k'|\ge t\,|z_l-z_k|\cos\vartheta,
\end{equation}
where $z_k'$ and $z_l'$ are the orthogonal projections of $z_k$ and $z_l$ onto a lines $\ell_\theta$ in $\C$ 
that passes though zero, makes an angle $\vartheta$ with the $x$-axis, and is chosen so that 
the length of the orthogonal projection of the interval $[z_k,z_l]$ onto $\ell_\theta$ is greater or equal than 
the length of the orthogonal projection of the interval $[z_k,z_l]$ onto the other line that passes through zero and 
makes angle $\vartheta$ with the $x$ axis. (One easily sees that $|z_l'-z_k'|\ge |z_l-z_k|\cos\vartheta>0$
where the equality happens when $[z_k,z_l]$ is parallel to the $x$-axis. Since \eqref{eq:integral_inequality} is 
independent of the order of $z_k'$ and $z_l'$, in \eqref{eq:integral_inequality'} we assume without loss of generality 
that $|z_k'|<|z_l'|$.) It now follows from \eqref{eq:w_k-Cauchy_sequence} that $w_k$, $k\ge 1$, converges 
in $\A^{m,p}_{N;0,\C}$ to $\widetilde{w} : W\to\A^{m,p}_{N;0,\C}$ uniformly in $W$. 
Since the limit function is independent of the choice of the sequence $(z_k)_{k\ge 1}$ described above, 
we conclude from \eqref{eq:w_k} by taking the points $z_k$, $k\ge 1$, on $[0,z]$ that 
\begin{equation}\label{eq:tw}
\widetilde{w}(z,\tz)=\int_0^{\tz}S_\nu(s)F\big(u(z-s)\big)\,ds
\end{equation}
for any $(z,\tz)\in W$. By recalling that for any $k\ge 1$ the map \eqref{eq:w_k-map_local} is holomorphic, 
we obtain from the uniform convergence that 
\begin{equation}\label{eq:tw-map_local}
\widetilde{w} : W\to\A^{m,p}_{N;0,\C},\quad(z,\tz)\mapsto w(z,\tz),
\end{equation}
is holomorphic for any choice of $z_0\in\bbS_{\vartheta,T}$. 
For $(z,\tz_1)$ and $(z,\tz_2)$ in $W$ taken so that $|\tz_1|, |\tz_2|>\delta$ where
$\delta>0$ is the half of the distance between zero and $U(z_0)$ in $\C$, 
we obtain from \eqref{eq:tw}, by arguing as in \eqref{eq:w_k-Cauchy_sequence}, that
\begin{eqnarray}\label{eq:tw-Cauchy_sequence}
\big\|\widetilde{w}(z,\tz_2)-\widetilde{w}(z,\tz_1)\big\|_{\A^{m,p}_{N;0,\C}}&=&
\Big\|\int_{\tz_1}^{\tz_2}S_\nu(s)F\big(u(z-s)\big)\,ds\Big\|_{\A^{m,p}_{N;0,\C}}\nonumber\\
&\le&C\Big(\int_{\tz_1}^{\tz_2}\frac{|ds|}{\sqrt{|s|}}\Big)\sup_{s\in\bbS_{\vartheta,T}}
\|F\big(u(s)\big)\|_{\A^{m-1,p}_{1,N+1;0,\C}}\nonumber\\
&\le&\frac{C\kappa\rho_1^2}{\sqrt{\delta}}\,|\tz_2-\tz_1|
\end{eqnarray}
where the integration is taken over the segment $[\tz_1,\tz_2]\subseteq\bbS_{\vartheta,T}$.
Now, take $z\in U(z_0)$ and define
\begin{equation}\label{eq:tw_k}
\widetilde{w}_k(z):=\widetilde{w}\big(z,\tz_k(z)\big)\quad\text{\rm where}\quad\tz_k(z):=\Big(1-\frac{1}{2k}\Big)z
\end{equation}
for any $k\ge 1$. Note that $(z,\tz_k(z))\in W$ for any $k\ge 1$. 
This and the analyticity of \eqref{eq:tw-map_local} imply that the map
\begin{equation}\label{eq:tw_k-map-local}
\widetilde{w}_k : U(z_0)\to\A^{m,p}_{N;0,\C}
\end{equation}
is holomorphic. It follows from \eqref{eq:tw-Cauchy_sequence} that $\widetilde{w}_k$ converges in $\A^{m,p}_{N;0,\C}$ to
\begin{equation}\label{eq:w_k-copy}
w(z)=\int_0^zS_\nu(s)F\big(u(z-s)\big)\,ds
\end{equation}
uniformly in $z\in U(z_0)$.
Hence, the map $w : U(z_0)\to\A^{m,p}_{N;0,\C}$ is holomorphic for any choice of $z_0\in\bbS_{\vartheta,T}$. This shows that
\begin{equation}\label{eq:w-map}
w : \bbS_{\vartheta,T}\to\A^{m,p}_{N;0,\C}
\end{equation}
is holomorphic. Moreover, the first representation of $w$ in \eqref{eq:w} and estimates as in \eqref{eq:T-estimate1} show that 
\[
\|w(z)\|_{\A^{m,p}_{N;0,\C}}=O\big(\sqrt{|z|}\big)
\] 
for $z\in\bbS_{\vartheta,T}$, which implies that \eqref{eq:w-map} is bounded and continuous at zero.
This completes the proof of the first statement of the lemma.

Let us now prove the last statement of the lemma. 
Note that $\mathcal{T}(u)|_{(0,T)}=\mathcal{T}_{\rm real}\big(u|_{(0,T)}\big)$
where $\mathcal{T}_{\rm real}$ is given by \eqref{eq:T-map_real} in Appendix \ref{sec:aux-results}.
It then follows from Lemma \ref{lem:T-map_real} in Appendix \ref{sec:aux-results} that 
\[
\mathcal{T}(u)|_{(0,T)}\in C\big([0,T),\A^{m,p}_{N;0}\big)\cap C^1\big((0,T),\A^{m-2,p}_{N;0}\big)
\]
satisfies
\begin{equation}\label{eq:T-equation}
\mathcal{T}(u)_t=\nu\Delta\mathcal{T}(u)+F(u),\quad\mathcal{T}(u)(0)=u_0\,.
\end{equation}
Since $u, \mathcal{T}(u) : \bbS_{\vartheta,T}\to\A^{m,p}_{N;0,\C}$ are real-analytic holomorphic maps,
$\mathcal{T}(u)_z$ and $\nu\Delta\mathcal{T}(u)+F(u)$ are real-analytic holomorphic maps 
$\bbS_{\vartheta,T}\to\A^{m-2,p}_{N;0,\C}$ (cf. Remark \ref{rem:lipschitz}). 
By combining this with \eqref{eq:T-equation} we then obtain that
\[
\mathcal{T}(u)_z=\nu\Delta\mathcal{T}(u)+F(u)
\]
on $\bbS_{\vartheta,T}$. This completes the proof of the lemma.
\end{proof}

\begin{Rem}\label{rem:analytic_dependence}
Let $u(z;u_0)$, $z\in\bbS_{\vartheta,T}$, $u_0\in B^{m,p}_{N;0}(\rho)\cap\accentset{\,\,\,\circ}{\A}^{m,p}_{N;0}$,
be the holomorphic solution in Theorem \ref{th:NS-complex}. Then, the map
$\bbS_{\vartheta,T}\times\Big(B^{m,p}_{N;0}(\rho)\cap\accentset{\,\,\,\circ}{\A}^{m,p}_{N;0}\Big)\to
\accentset{\,\,\,\circ}{\A}^{m,p}_{N;0}$, $(z,u_0)\mapsto u(z;u_0)$,
is analytic. In order to see this, we choose $u_0$ for the initial guess in the iterative process in the proof of Theorem \ref{th:NS-complex}
and then use Remark \ref{rem:T-map} to conclude that for any $k\ge 1$ the $k$-th iterate of the map \eqref{eq:T},
\[
\bbS_{\vartheta,T}\times\Big(B^{m,p}_{N;0}(\rho)\cap\accentset{\,\,\,\circ}{\A}^{m,p}_{N;0}\Big)\to
\accentset{\,\,\,\circ}{\A}^{m,p}_{N;0},\quad(z,u_0)\mapsto\mathcal{T}^k(u_0)(z), 
\]
is analytic. The claimed analyticity of the solution $u(z;u_0)$ then follows since the estimates \eqref{eq:T-estimate1} and \eqref{eq:T-lipschitz}
in the proof of Theorem \ref{th:NS-complex} can be made locally uniform in $\eta\in\U$ (This follows easily by inspection.)
\end{Rem}

Let us now prove Theorem \ref{th:NS}.

\begin{proof}[Proof of Theorem \ref{th:NS}]
We follow the notation in the proof of Theorem \ref{th:NS-complex}.
Take $\rho>0$, $\vartheta\in(0,\pi/2)$.
Then, by Theorem \ref{th:NS-complex}, there exists $T'\equiv T'(\rho,\vartheta)>0$ such that for any divergence free
$u_0\in B^{m,p}_{N;0}(\rho)$ there exists a holomorphic solution 
$u\in C_b\big(\bbS_{\vartheta,T'},\accentset{\,\,\,\circ}\A^{m,p}_{N;0,\C}\big)$ of the Navier-Stokes equation on $\bbS_{\vartheta,T'}$ 
that depends Lipschitz continuously on the initial data $u_0\in B^{m,p}_{N;0}(\rho)\cap\accentset{\,\,\,\circ}\A^{m,p}_{N;0}$.
Now, take $0<T<T'$. By the definition of a holomorphic solution (cf. Definition \ref{def:NS-holomorphic_solution}), we have that
\begin{equation}\label{eq:u-restricted}
u|_{[0,T]}\in C\big([0,T],\accentset{\,\,\,\circ}\A^{m,p}_{N;0}\big)\cap C^1\big([0,T],\accentset{\,\,\,\circ}\A^{m-2,p}_{N;0}\big),
\end{equation}
is a solution of the Navier-Stokes equation \eqref{eq:NS}, and hence it satisfies \eqref{eq:NS-modified'} (Proposition \ref{prop:NS-modified}).

Let us now prove the uniqueness of solutions of the Navier-Stokes equation \eqref{eq:NS} in the class 
$C\big([0,T],\accentset{\,\,\,\circ}\A^{m,p}_{N;0}\big)\cap C^1\big([0,T],\accentset{\,\,\,\circ}\A^{m-2,p}_{N;0}\big)$.
(Note that such uniqueness does not follow from the Lipschitz continuity on the initial data of the solutions $u|_{[0,T]}$, where
$u\in C_b\big(\bbS_{\vartheta,T'},\accentset{\,\,\,\circ}\A^{m,p}_{N;0,\C}\big)$ is the holomorphic solution constructed above, 
since at this point we do  not know if any solution of the Navier-Stokes equation in 
$C\big([0,T],\accentset{\,\,\,\circ}\A^{m,p}_{N;0}\big)\cap C^1\big([0,T],\accentset{\,\,\,\circ}\A^{m-2,p}_{N;0}\big)$
extends to a holomorphic one.) To this end, take
\[
u\in C\big([0,T],\accentset{\,\,\,\circ}\A^{m,p}_{N;0}\big)\cap C^1\big([0,T],\accentset{\,\,\,\circ}\A^{m-2,p}_{N;0}\big)
\]
to be a solution of the Navier-Stokes equation such that $u|_{t=0}=u_0$.
Then, by Proposition \ref{prop:NS-modified}, such $u$ satisfies equation \eqref{eq:NS-modified'} (and \eqref{eq:NS-modified}).
Hence, $u$ satisfies the inhomogeneous heat equation on $[0,T]$,
\begin{equation}\label{eq:inhomogeneous_heat_equation}
u_t=\nu\Delta u+f,\quad u|_{t=0}=u_0,
\end{equation}
where $f(t):=F\big(u(t)\big)$ and $f : [0,T]\to\A^{m-2,p}_{N;0}$ is continuous by Lemma \ref{lem:lipschitz}.
By Lemma \ref{lem:T-map_real}, 
\[
\mathcal{T}_{\rm real}(u)\in C\big([0,T),\A^{m,p}_{N;0}\big)\cap C^1\big([0,T),\A^{m-2,p}_{N;0}\big)
\]
also satisfies \eqref{eq:inhomogeneous_heat_equation}. In view of the uniqueness in Proposition \ref{prop:heat_equation_in_S'}
we then conclude that $u=\mathcal{T}_{\rm real}(u)$ on $[0,T)$. Hence, $u$ satisfies 
\begin{equation}\label{eq:mild_solution(real)}
u(t)=S_\nu(t)u_0+\int_0^t S_\nu(t-s)F\big(u(s)\big)\,ds,\quad t\in[0,T].
\end{equation}
Now, we take $u,v\in C\big([0,T],\accentset{\,\,\,\circ}\A^{m,p}_{N;0}\big)\cap C^1\big([0,T],\accentset{\,\,\,\circ}\A^{m-2,p}_{N;0}\big)$
to be solutions of the Navier-Stokes equation so that $u_{t=0}=u_0$ and $v_{t=0}=v_0$ with 
$u_0,v_0\in B_{\A^{m,p}_{N;0}}(\rho)\cap\accentset{\,\,\,\circ}\A^{m,p}_{N;0}$. 
Then, since they both satisfy \eqref{eq:mild_solution(real)}, we can argue exactly as in the proof of \eqref{eq:lipschitz_continuity} above 
to conclude that 
\[
\sup_{[0,T]}\|u-v\|_{A^{m,p}_{N;0}}\le L\|u_0-v_0\|_{\A^{m,p}_{N;0}}
\] 
holds with a constant  $L>0$ depending on the choice of $\rho>0$. This together with \eqref{eq:NS-modified'}
and Lemma \ref{lem:lipschitz} proves that the solutions of the Navier-Stokes equation 
in $C\big([0,T],\accentset{\,\,\,\circ}\A^{m,p}_{N;0}\cap C^1\big([0,T],\accentset{\,\,\,\circ}\A^{m-2,p}_{N;0}\big)$ depend
Lipschitz continuously on bounded sets of initial data in $\accentset{\,\,\,\circ}\A^{m,p}_{N;0}$. In particular the solutions are unique.
\end{proof}

A direct consequence of Theorem \ref{th:NS} is the following

\begin{Coro}\label{coro:NS-global}
We have
\begin{itemize}
\item[(i)] For any $u_0\in\accentset{\,\,\,\circ}\A^{m,s}_{N;0}$ there exist a maximal time of existence $T_\infty>0$ and a unique solution 
$u\in C\big([0,T_\infty),\accentset{\,\,\,\circ}\A^{m,p}_{N;0}\big)\cap C^1\big([0,T_\infty),\accentset{\,\,\,\circ}\A^{m-2,p}_{N;0}\big)$ 
of the Navier-Stokes equation \eqref{eq:NS}. If $T_\infty<\infty$ then the $\A^{m,p}_{N;0}$-norm of $u$ blows up, i.e.,
$\lim_{t\to T_\infty-0}\|u(t)\|_{A^{m,p}_{N;0}}=\infty$.
\item[(ii)] The solution $u|_{(0,T_\infty)} : (0,T_\infty)\to\accentset{\,\,\,\circ}\A^{m,p}_{N;0}$ is real-analytic.
\end{itemize}
\end{Coro}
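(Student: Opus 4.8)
The plan is to treat the two statements separately: part (i) is the standard continuation argument for a semilinear parabolic equation, while part (ii) is a direct consequence of the complex-analytic local theory in Theorem \ref{th:NS-complex}. The essential inputs for both are that the local existence time $T\equiv T(\rho)$ furnished by Theorem \ref{th:NS} (respectively $T(\rho,\vartheta)$ in Theorem \ref{th:NS-complex}) depends only on the radius $\rho$ of the ball containing the initial data (and on the fixed angle $\vartheta$), together with the uniqueness of solutions established at the end of the proof of Theorem \ref{th:NS}.

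For part (i) I would define $T_\infty$ to be the supremum of all $T>0$ for which a solution $u\in C\big([0,T],\accentset{\,\,\,\circ}{\A}^{m,p}_{N;0}\big)\cap C^1\big([0,T],\accentset{\,\,\,\circ}{\A}^{m-2,p}_{N;0}\big)$ of \eqref{eq:NS} with $u|_{t=0}=u_0$ exists; Theorem \ref{th:NS} guarantees $T_\infty>0$. Uniqueness implies that any two such solutions agree on the intersection of their intervals of definition, so they glue to a single maximal solution on $[0,T_\infty)$. For the blow-up alternative, suppose $T_\infty<\infty$ and $\liminf_{t\to T_\infty-0}\|u(t)\|_{\A^{m,p}_{N;0}}=:\rho_0<\infty$. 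Then there is a sequence $t_n\to T_\infty-0$ with $\|u(t_n)\|_{\A^{m,p}_{N;0}}\le\rho_0+1=:\rho$, and applying Theorem \ref{th:NS} with initial data $u(t_n)$ produces a solution on $[t_n,t_n+T(\rho)]$ with the \emph{uniform} existence time $T(\rho)>0$. Choosing $n$ large enough that $t_n>T_\infty-T(\rho)$, uniqueness allows this to be glued to $u$ and extends the solution beyond $T_\infty$, contradicting maximality. Hence $\liminf_{t\to T_\infty-0}\|u(t)\|_{\A^{m,p}_{N;0}}=\infty$, which (as $\liminf=\infty$ forces the full limit to be $\infty$) is exactly the asserted blow-up.

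For part (ii) I would fix $t_0\in(0,T_\infty)$, pick $\varepsilon>0$ with $[t_0-\varepsilon,t_0+\varepsilon]\subset(0,T_\infty)$, and set $\rho:=1+\sup_{[t_0-\varepsilon,t_0+\varepsilon]}\|u(t)\|_{\A^{m,p}_{N;0}}<\infty$ by continuity. Fixing any $\vartheta\in(0,\pi/2)$ and letting $T=T(\rho,\vartheta)>0$ be the time from Theorem \ref{th:NS-complex}, I would choose $t_1\in(t_0-\varepsilon,t_0)$ with $0<t_0-t_1<T$ and apply Theorem \ref{th:NS-complex} to the datum $u(t_1)\in B^{m,p}_{N;0}(\rho)\cap\accentset{\,\,\,\circ}{\A}^{m,p}_{N;0}$, obtaining a bounded holomorphic solution $\widetilde u:\bbS_{\vartheta,T}\to\accentset{\,\,\,\circ}{\A}^{m,p}_{N;0,\C}$ with $\widetilde u(0)=u(t_1)$. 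Its restriction $\widetilde u|_{[0,T)}$ is a real solution of \eqref{eq:NS} with initial data $u(t_1)$, so by the uniqueness in Theorem \ref{th:NS} it coincides with $t\mapsto u(t_1+t)$ on the common interval; in particular $u(t_1+s)=\widetilde u(s)$ for real $s$ near $t_0-t_1$. Since the real segment $(0,T)$ lies in the \emph{interior} of the sector $\bbS_{\vartheta,T}$ and $\widetilde u$ is holomorphic there, the map $s\mapsto\widetilde u(s)$ is real-analytic in a complex neighborhood of $t_0-t_1$, whence $u$ is real-analytic near $t_0$. As $t_0\in(0,T_\infty)$ is arbitrary, $u|_{(0,T_\infty)}$ is real-analytic.

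The steps are routine once the two cited theorems are available; the points requiring care are that $T(\rho)$ in Theorem \ref{th:NS} is genuinely uniform over bounded sets of data (this is what powers the continuation in (i)) and that, in (ii), the interior real segment $(0,T)\subset\bbS_{\vartheta,T}$ is what lets one transfer holomorphy in the complex time variable to real-analyticity of the real-time flow. The main obstacle is thus not a computation but the correct use of uniqueness to patch the locally constructed real and complex solutions together; this is precisely where the decay condition on $\nabla\p$ from Remark \ref{rem:definition_solution} is implicitly used, since it underlies the uniqueness statement in Theorem \ref{th:NS}.
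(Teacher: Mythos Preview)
Your proposal is correct and follows exactly the approach the paper has in mind: the paper simply states that item (i) ``can be proved in a standard way and will be thus omitted'' and that item (ii) ``follows directly from Theorem \ref{th:NS-complex}'', and your argument supplies precisely the standard continuation/blow-up argument for (i) and the localization-plus-uniqueness argument for (ii) that these remarks point to.
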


Item $(i)$ can be proved in a standard way and will be thus omitted. 
Item $(ii)$ follows directly from Theorem \ref{th:NS-complex}.

\appendix
\section{Properties of asymptotic spaces}\label{sec:appendix_properties}
Let us summarize some of the essential properties of the weighted and asymptotic spaces that were defined in Section \ref{sec:introduction}. 
For further details, see \cite[Appendix B]{McOwenTopalov2} and \cite[Appendix C]{McOwenTopalov4}. 
Assume that $1<p<\infty$ and recall from \cite[Lemma 2.2]{McOwenTopalov2} that for any $m\ge 0$, $\delta\in\R$, $1\le k\le d$, 
$\partial_k : W^{m+1,p}_\delta\to W^{m,p}_{\delta+1}$ is bounded. 

\begin{Prop}\label{pr:A-properties}  
Assume that $m\ge 0$. Then, we have
\begin{enumerate}
\item[(i)] If $n_1\geq n$, $N_1\geq N$, and $\ell_1\leq\ell$, then we have a continuous inclusion 
${\mathcal A}_{n_1,N_1;\ell_1}^{m,p}\subseteq {\mathcal A}_{n,N;\ell}^{m,p}$.
\item[(ii)]  If $m\geq 1$,  then $u\mapsto \partial u/\partial x_j$ is a bounded linear map 
${\mathcal A}_{n,N;\ell}^{m,p}\to{\mathcal A}_{n+1,N+1;\ell-1}^{m-1,p}$.
\item[(iii)] Multiplication by $\chi(r)\,r^{-k}$ is bounded ${\mathcal A}_{n,N;\ell}^{m,p}\!\to\! {\mathcal A}_{n+k,N+k;\ell-k}^{m,p}$.
\item[(iv)] Multiplication by $\chi(r)\,(\log r)^{j}$ is bounded ${\mathcal A}_{n,N;\ell}^{m,p}\!\to\! {\mathcal A}_{n,N^-;\ell+j}^{m,p}$
for any $N^-<N$.
\end{enumerate}
\end{Prop}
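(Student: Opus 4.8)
The plan is to treat all four items by a single mechanism. Write $v\in\A^{m,p}_{n,N;\ell}$ in its unique decomposition $v=a+f$ as in \eqref{AWlog-expansion1}--\eqref{AW-expansion3}, apply the operation in question termwise to the finitely many asymptotic monomials $\chi(r)\,b(\theta)(\log r)^l/r^s$ that make up $a$, and separately to the remainder $f\in W^{m,p}_{\gamma_N}$. For the remainder I would invoke only mapping properties of the weighted Sobolev spaces already available: the bounded inclusion $W^{m,p}_{\delta'}\subseteq W^{m,p}_\delta$ for $\delta'\ge\delta$ (immediate from $\x\ge 1$ and \eqref{W-norm}); the boundedness of $\partial_k:W^{m+1,p}_\delta\to W^{m,p}_{\delta+1}$ recalled at the start of this appendix; and the facts that multiplication by a smooth symbol decaying like $\x^{-k}$ maps $W^{m,p}_\delta$ into $W^{m,p}_{\delta+k}$, while multiplication by $\chi(r)(\log r)^j$ maps $W^{m,p}_\delta$ into $W^{m,p}_{\delta-\varepsilon}$ for every $\varepsilon>0$. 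For the asymptotic monomials I would use the single identity \eqref{eq:d-hat_relation} together with Lemma \ref{lem:properties_W}, which tells us precisely when such a monomial stays in the asymptotic part and when it must be reabsorbed into the remainder, and with which sphere-regularity index. Throughout, any smooth compactly supported correction (produced by $\chi'$, or by $\chi^2-\chi$, which is supported in $[1,2]$) belongs to $W^{m,p}_\delta$ for every $\delta$ and is simply added to the remainder.

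For (i), a monomial $\chi(r)\,a^j_k(\theta)(\log r)^j/r^k$ of $v$ has order $n_1\le k\le N_1$, log power $0\le j\le k+\ell_1$, and coefficient $a^j_k\in H^{m+1+N_1-k,p}(\s^{d-1})$; I would split these by order. Those with $k\le N$ stay asymptotic: the log bound $j\le k+\ell_1\le k+\ell$ holds since $\ell_1\le\ell$, and the target regularity $H^{m+1+N-k,p}$ follows from $H^{m+1+N_1-k,p}\subseteq H^{m+1+N-k,p}$ since $N_1\ge N$. Those with $N<k\le N_1$ must go into the remainder: by Lemma \ref{lem:properties_W} such a monomial lies in $W^{m,p}_\delta$ for every $\delta<k-\tfrac dp$, and because $k\ge N+1$ while $\gamma_N=N+\gamma_0$ with $\gamma_0+\tfrac dp<1$ (see \eqref{def:gamma_N}) we have $\gamma_N<k-\tfrac dp$, so each such term lies in $W^{m,p}_{\gamma_N}$. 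Together with $f\in W^{m,p}_{\gamma_{N_1}}\subseteq W^{m,p}_{\gamma_N}$ this exhibits $v\in\A^{m,p}_{n,N;\ell}$, the norm estimate being read off from \eqref{def:AW;-norm}.

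Items (ii)--(iv) proceed identically, the only new input being the explicit effect on one monomial. Applying $\partial_j$ to $\chi(r)\,b(\theta)(\log r)^l/r^s$ and using \eqref{eq:d-hat_relation} yields monomials of order $s+1$ with sphere coefficients $\hat\partial_j b-s\,\theta_j b$ and $l\,\theta_j b$ and log powers $l,l-1$, plus the $\chi'$ correction; one checks that the orders $n+1\le s+1\le N+1$, the log bound $l\le s+\ell=(s+1)+(\ell-1)$, and the coefficient regularity $H^{m+N-s,p}=H^{(m-1)+1+(N+1)-(s+1),p}$ all match $\A^{m-1,p}_{n+1,N+1;\ell-1}$, while $\partial_j f\in W^{m-1,p}_{\gamma_N+1}=W^{m-1,p}_{\gamma_{N+1}}$. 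For (iii), multiplication by $\chi(r)r^{-k}$ sends $\chi(r)\,b(\theta)(\log r)^l/r^s$ to $\chi(r)\,b(\theta)(\log r)^l/r^{s+k}$ up to the compactly supported correction $(\chi^2-\chi)\,b(\theta)(\log r)^l/r^{s+k}$, raising the order and the remainder weight by $k$ and matching $\A^{m,p}_{n+k,N+k;\ell-k}$. For (iv), multiplication by $\chi(r)(\log r)^j$ raises each log power by $j$, and since $N^-<N$ forces a strict integer drop $\gamma_{N^-}\le\gamma_N-1$ which dominates the logarithmic growth, it still carries $f$ into $W^{m,p}_{\gamma_{N^-}}$.

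The genuinely delicate points, as opposed to the lengthy but routine index bookkeeping, are the two weighted-Sobolev multiplier claims used for the remainder in (iii) and (iv) and the absorption step in (i). The latter is exactly where the hypothesis $\gamma_0+\tfrac dp<1$ is essential: it is precisely what guarantees that a monomial of order $k=N+1$ already decays fast enough to lie in $W^{m,p}_{\gamma_N}$. The multiplier estimate for the logarithmic factor in (iv) is the step I expect to demand the most care, since one must verify that every derivative $\partial^\alpha\big(\chi(r)(\log r)^j\big)$ is controlled by $\x^{-|\alpha|}(\log\x)^j$ and that the resulting logarithmic loss is absorbed by the strict weight decrease; this, however, is precisely the content of the multiplier lemmas recorded in \cite[Appendix B]{McOwenTopalov2} and \cite[Appendix C]{McOwenTopalov4}, to which I would appeal.
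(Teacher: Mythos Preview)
The paper does not actually prove this proposition: it is stated in Appendix~\ref{sec:appendix_properties} and immediately deferred to \cite[Appendix B]{McOwenTopalov2} and \cite[Appendix C]{McOwenTopalov4} for the details. Your sketch is correct and is precisely the mechanism used in those references---split $v=a+f$, handle each asymptotic monomial by index bookkeeping, absorb over-order terms into the remainder via the hypothesis $\gamma_0+\tfrac dp<1$, and treat the remainder by the standard weighted Sobolev multiplier and differentiation lemmas---so there is nothing to compare.

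One small omission worth flagging: in (iv) you discuss only how the remainder $f$ is carried into $W^{m,p}_{\gamma_{N^-}}$, but the asymptotic monomials of order $N^-<k\le N$ must also be reabsorbed into the remainder of the target space, by exactly the same argument you gave in (i). You clearly have the mechanism in hand, so this is a matter of completeness rather than a gap.
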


\noindent Next we consider products of functions in asymptotic spaces. 
Recall from \cite[Proposition 2.2, Lemma 2.2]{McOwenTopalov2} that, for $m>d/p$ and any $\delta_1,\delta_2\in\R$, 
pointwise multiplication of functions $(f,g)\mapsto fg$ defines a continuous map
\begin{equation*}
W^{m,p}_{\delta_1} \times W^{m,p}_{\delta_2} \to W^{m,p}_{\delta_1+\delta_2+\frac{d}{p}}.
\end{equation*}
This property is needed for the proof of the following propositions (see e.g. \cite[Appendix B]{McOwenTopalov2}).

\begin{Prop}\label{pr:A-products} 
Suppose $m>d/p$,  $0\leq n_i\leq N_i$,  and $\ell_i+n_i\geq 0$  for $i=1,2$. Let $n_0=n_1+n_2$ and $\ell_0=\ell_1+\ell_2$.
Then, we have
\begin{enumerate} 
\item[(i)] For any $N_0<\min(N_1+n_2,N_2+n_1)$ we have
\begin{equation}\label{est:A-multiplication1}
\|u\,v\|_{{\mathcal A}_{n_0,N_0;\ell_0}^{m,p} }\leq C\,\|u\|_{{\mathcal A}_{n_1,N_1;\ell_1}^{m,p} }\|v\|_{{\mathcal A}_{n_2,N_2;\ell_2}^{m,p}}
\quad\hbox{for}\ u\in {\mathcal A}_{n_1,N_1;\ell_1}^{m,p},\ v\in {\mathcal A}_{n_2,N_2;\ell_2}^{m,p}.
\end{equation}
\item[(ii)]  For $N_0=\min(N_1+n_2,N_2+n_1)$ we have 
\begin{equation}\label{est:A-multiplication2}
\|u\,v\|_{{\mathcal A}_{n_0,N_0;-n_0}^{m,p} }\leq C\,\|u\|_{{\mathcal A}_{n_1,N_1;-n_1}^{m,p} }\|v\|_{{\mathcal A}_{n_2,N_2;-n_2}^{m,p} }
\ \hbox{for}\ u\in {\mathcal A}_{n_1,N_1;-n_1}^{m,p},\ v\in {\mathcal A}_{n_2,N_2;-n_2}^{m,p}.
\end{equation}
\end{enumerate}
\end{Prop}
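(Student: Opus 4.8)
The plan is to prove both parts by decomposing each factor into its asymptotic and remainder parts, expanding the product bilinearly, and estimating the four resulting groups separately. Writing $u=a_1+f_1$ and $v=a_2+f_2$ as in \eqref{AWlog-expansion1}--\eqref{AW-expansion3}, with
\[
a_i=\chi(r)\sum_{k=n_i}^{N_i}\sum_{j=0}^{k+\ell_i}\frac{a^j_{k,(i)}(\theta)(\log r)^j}{r^k},\qquad f_i\in W^{m,p}_{\gamma_{N_i}},
\]
I would expand $uv=a_1a_2+a_1f_2+f_1a_2+f_1f_2$ and extract a bounded bilinear bound from each group, so that the multiplicative estimates \eqref{est:A-multiplication1} and \eqref{est:A-multiplication2} follow by recalling the norm \eqref{def:AW;-norm} and summing.

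The remainder--remainder term is immediate: by the weighted Sobolev multiplication property recalled just before the proposition, $f_1f_2\in W^{m,p}_{\gamma_{N_1}+\gamma_{N_2}+\frac dp}$, and since $\gamma_{N_1}+\gamma_{N_2}+\frac dp=N_1+N_2+2\gamma_0+\frac dp\ge N_0+\gamma_0=\gamma_{N_0}$ (using $N_0\le N_1+N_2$ and $\gamma_0+\frac dp\ge 0$ from \eqref{def:gamma_N}), the inclusion $W^{m,p}_{\delta'}\subseteq W^{m,p}_\delta$ for $\delta'\ge\delta$ places it in the remainder space with room to spare. The purely asymptotic term $a_1a_2$ is a finite sum of monomials $\chi(r)^2\,a^{j_1}_{k_1,(1)}a^{j_2}_{k_2,(2)}(\theta)(\log r)^{j_1+j_2}/r^{k_1+k_2}$, which I would split by the total order $k:=k_1+k_2$. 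The monomials with $k\le N_0$ form the asymptotic part of $uv$: their log powers obey $j_1+j_2\le(k_1+\ell_1)+(k_2+\ell_2)=k+\ell_0$, matching the admissible power in $\A^{m,p}_{n_0,N_0;\ell_0}$, while the coefficient regularity is controlled by Sobolev multiplication on the sphere $H^{s_1,p}(\s^{d-1})\cdot H^{s_2,p}(\s^{d-1})\subseteq H^{s,p}(\s^{d-1})$ with $s=m+1+N_0-k$; here $s_1+s_2-s=m+1+N_1+N_2-N_0\ge m+1>\tfrac{d-1}{p}$ and $s\le\min(s_1,s_2)$ in both (i) and (ii), so the product coefficient lies in the required $H^{m+1+(N_0-k),p}(\s^{d-1})$. (The discrepancy $\chi^2$ versus $\chi$ contributes only a compactly supported error, which lies in every $W^{m,p}_\delta$.) The monomials with $k>N_0$, i.e. $k\ge N_0+1$, are absorbed into the remainder: a term decaying like $r^{-k}(\log r)^J$ lies in $W^{m,p}_{\gamma_{N_0}}$ because $k\ge N_0+1>N_0+\gamma_0+\frac dp=\gamma_{N_0}+\frac dp$, the strict gap swallowing the logarithm regardless of $J$.

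The delicate group is the mixed one, $a_1f_2$ and $f_1a_2$, and this is exactly where (i) and (ii) diverge. Multiplication by $\chi(r)r^{-k_1}$ carries $W^{m,p}_\delta$ boundedly into $W^{m,p}_{\delta+k_1}$ (a direct consequence of \eqref{W-norm} and Leibniz), the coefficient $a^{j_1}_{k_1,(1)}(\theta)$ acting as a bounded multiplier since it is $C^m$ by Sobolev embedding on $\s^{d-1}$, while multiplication by $\chi(r)(\log r)^{j_1}$ costs an arbitrarily small amount of weight. Hence the relevant piece of $a_1f_2$ sits essentially in $W^{m,p}_{\gamma_{N_2}+k_1}$, and I need $\gamma_{N_2}+k_1\ge\gamma_{N_0}$, i.e. $N_2+k_1\ge N_0$. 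For $k_1>n_1$ this holds with at least one unit to spare, so the logarithm is harmless; the only \emph{critical} term is the leading one $k_1=n_1$, where $N_2+n_1\ge N_0$ with equality possible. In case (i) the strict bound $N_0<\min(N_1+n_2,N_2+n_1)$ forces $N_2+n_1\ge N_0+1$, leaving room to absorb any logarithm, so arbitrary $\ell_i$ are allowed. In case (ii) equality pins the leading mixed term onto the critical weight $\gamma_{N_0}$, where a factor $(\log r)^{j_1}$ with $j_1\ge1$ would destroy membership in $W^{m,p}_{\gamma_{N_0}}$; the hypothesis $\ell_i=-n_i$ is precisely what forces $j_1\le k_1+\ell_1=0$ at $k_1=n_1$, eliminating the logarithm. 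I therefore expect the main obstacle to be the bookkeeping at this critical weight: pinning down which mixed monomial sits exactly at weight $\gamma_{N_0}$ and checking that the no-log restriction of (ii) (respectively the strict order gap of (i)) guarantees decay strictly faster than $r^{-(N_0+\gamma_0+d/p)}$, after which assembling the four bounds yields \eqref{est:A-multiplication1} and \eqref{est:A-multiplication2}.
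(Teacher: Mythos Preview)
Your decomposition into four groups ($a_1a_2$, $a_1f_2$, $f_1a_2$, $f_1f_2$) and the analysis of each is correct and is the natural approach. Note, however, that the present paper does not actually prove this proposition: it simply records the weighted Sobolev product rule $W^{m,p}_{\delta_1}\times W^{m,p}_{\delta_2}\to W^{m,p}_{\delta_1+\delta_2+d/p}$ and refers to \cite[Appendix~B]{McOwenTopalov2} for the details, so there is no in-paper proof to compare against. Your outline reconstructs precisely the argument one finds in that reference: split each factor, multiply, and sort the resulting monomials by total order $k=k_1+k_2$, pushing the high-order ones and the mixed terms into the remainder via the weight inequality $k>\gamma_{N_0}+d/p$. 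Your identification of the critical weight at $k_1=n_1$ (respectively $k_2=n_2$) in the mixed terms, and the explanation of why the strict inequality in~(i) absorbs arbitrary logarithms while the hypothesis $\ell_i=-n_i$ in~(ii) kills the logarithm exactly at the borderline, is the heart of the matter and is handled correctly.
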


\noindent As a corollary of this proposition we find that the asymptotic space $\A^{m,p}_{n,N;0}$ forms a Banach algebra 
under pointwise multiplication:

\begin{Coro}\label{co:BanachAlgebras} 
If $m>d/p$ and $-N\le-n\le\ell\le 0$, then $\A^{m,p}_{n,N;\ell}$ is a Banach algebra. In particular, $\A^{m,p}_{n,N;0}$ is a Banach algebra
if $m>d/p$ and $0\le n\le N$.
\end{Coro}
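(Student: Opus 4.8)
The plan is to deduce the Banach algebra property directly from the product estimates of Proposition \ref{pr:A-products}, treating the top-order (borderline) index separately. Since each $\A^{m,p}_{n,N;\ell}$ is already known to be a Banach space, it suffices to show that pointwise multiplication sends $\A^{m,p}_{n,N;\ell}\times\A^{m,p}_{n,N;\ell}$ continuously into $\A^{m,p}_{n,N;\ell}$, i.e. to produce a bound $\|uv\|_{\A^{m,p}_{n,N;\ell}}\le C\,\|u\|_{\A^{m,p}_{n,N;\ell}}\|v\|_{\A^{m,p}_{n,N;\ell}}$. Once such a bound is in hand, replacing $\|\cdot\|$ by the equivalent norm $C\|\cdot\|$ makes multiplication submultiplicative, so the algebra is a genuine Banach algebra. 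To apply Proposition \ref{pr:A-products} I would take $n_1=n_2=n$, $N_1=N_2=N$, $\ell_1=\ell_2=\ell$, so that $n_0=2n$, $\ell_0=2\ell$, and $\min(N_1+n_2,N_2+n_1)=N+n$; the hypotheses $0\le n\le N$ and $\ell_i+n_i=\ell+n\ge 0$ required there are exactly the ones encoded in $-N\le-n\le\ell\le 0$, and the standing assumption $m>d/p$ is in force.

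First I would dispose of the case $n\ge 1$ using part (i). Since $N<N+n$, the value $N_0=N$ is admissible in \eqref{est:A-multiplication1}, which yields $\|uv\|_{\A^{m,p}_{2n,N;2\ell}}\le C\,\|u\|_{\A^{m,p}_{n,N;\ell}}\|v\|_{\A^{m,p}_{n,N;\ell}}$. It then remains only to absorb the output space into $\A^{m,p}_{n,N;\ell}$: by Proposition \ref{pr:A-properties}(i) there is a continuous inclusion $\A^{m,p}_{2n,N;2\ell}\subseteq\A^{m,p}_{n,N;\ell}$, whose requirements $2n\ge n$, $N\ge N$, and $2\ell\le\ell$ hold precisely because $\ell\le 0$. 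Composing the two maps gives the desired estimate for every $\ell$ with $-n\le\ell\le 0$.

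The borderline case is $n=0$, which forces $\ell=0$ (the chain $-n\le\ell\le 0$ collapses to $0\le\ell\le 0$) and in particular captures the space $\A^{m,p}_{N;0}=\A^{m,p}_{0,N;0}$ of primary interest. Here $\min(N_1+n_2,N_2+n_1)=N$, so part (i) is unavailable at the top order $N_0=N$; instead I would invoke the critical estimate \eqref{est:A-multiplication2} with $n_1=n_2=0$ and $\ell_1=\ell_2=0$. Because $n_0=0$ and $\ell_0=0$, the target space is already $\A^{m,p}_{0,N;0}$, so this gives $\|uv\|_{\A^{m,p}_{0,N;0}}\le C\,\|u\|_{\A^{m,p}_{0,N;0}}\|v\|_{\A^{m,p}_{0,N;0}}$ directly, with no further inclusion needed.

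The only genuinely delicate point is this $n=0$ borderline: submultiplicativity at the sharp asymptotic order $N_0=N$ is exactly the content of the critical estimate \eqref{est:A-multiplication2}, and it is fortunate that the restriction $\ell_i=-n_i$ under which that estimate is stated is compatible with the case at hand, since $n=0$ forces $\ell=0=-n$. Finally, the ``in particular'' assertion is the specialization $\ell=0$ of the general statement, whose hypothesis $-N\le-n\le 0\le 0$ reduces to $0\le n\le N$; it is covered by the $n\ge 1$ argument together with the $n=0$ borderline above.
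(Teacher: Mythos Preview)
Your proposal is correct and is precisely the derivation the paper has in mind: the paper does not supply a separate proof but presents the statement as an immediate corollary of Proposition~\ref{pr:A-products}, and you have carried out exactly that deduction, splitting into the subcritical case $n\ge 1$ (handled by \eqref{est:A-multiplication1} together with the inclusion of Proposition~\ref{pr:A-properties}(i)) and the borderline case $n=0$, $\ell=0$ (handled by \eqref{est:A-multiplication2}). The only cosmetic point is that when $2n>N$ the target $\A^{m,p}_{2n,N;2\ell}$ has empty asymptotic part and reduces to the remainder space $W^{m,p}_{\gamma_N}$, which embeds in $\A^{m,p}_{n,N;\ell}$ trivially; this is consistent with your argument.
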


\noindent The following two lemmas follows easily from the definition of the asymptotic spaces.

\begin{Lem}\label{lem:equivalent_W-norms}
Assume that $m\ge 0$ and $\delta\in\R$. Then $u\in W^{m+1,p}_\delta$ if and only if
$u\in W^{m,p}_\delta$ and $\partial_k u\in W^{m,p}_{\delta+1}$. Moreover,
the norm in $W^{m+1,p}_\delta$ is equivalent to the norm
\[
|u|_{W^{m+1,p}_\delta}:=\|u\|_{W^{m,p}_\delta}+
\sum_{k=1}^d\|\partial_k u\|_{W^{m,p}_{\delta+1}},\quad u\in W^{m+1,p}_\delta.
\]
\end{Lem}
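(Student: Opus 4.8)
The plan is to reduce the statement to an elementary comparison of the two norms in which the only real work is bookkeeping of multi-indices. Writing out \eqref{W-norm}, the norm to be compared is
\[
\|u\|_{W^{m+1,p}_\delta}=\sum_{|\gamma|\le m+1}\big\|\x^{\delta+|\gamma|}\partial^\gamma u\big\|_{L^p},
\]
while the two summands of $|u|_{W^{m+1,p}_\delta}$ are $\|u\|_{W^{m,p}_\delta}=\sum_{|\gamma|\le m}\|\x^{\delta+|\gamma|}\partial^\gamma u\|_{L^p}$ and $\sum_{k=1}^d\|\partial_k u\|_{W^{m,p}_{\delta+1}}=\sum_{k=1}^d\sum_{|\beta|\le m}\|\x^{\delta+1+|\beta|}\partial^\beta\partial_k u\|_{L^p}$. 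First I would observe that $\partial^\beta\partial_k u=\partial^{\beta+\mathbf e_k}u$, where $\mathbf e_k$ is the $k$-th coordinate multi-index, and that $|\beta+\mathbf e_k|=|\beta|+1$, so the weight $\x^{\delta+1+|\beta|}$ equals $\x^{\delta+|\beta+\mathbf e_k|}$. Thus every term of the last double sum has the form $\|\x^{\delta+|\gamma|}\partial^\gamma u\|_{L^p}$ with $1\le|\gamma|\le m+1$ and $\gamma_k\ge1$, and for fixed such $\gamma$ it occurs exactly once for each $k$ with $\gamma_k\ge1$. Since a multi-index $\gamma$ with $1\le|\gamma|\le m+1$ has between $1$ and $d$ nonzero entries, this gives
\[
\sum_{1\le|\gamma|\le m+1}\big\|\x^{\delta+|\gamma|}\partial^\gamma u\big\|_{L^p}\le\sum_{k=1}^d\|\partial_k u\|_{W^{m,p}_{\delta+1}}\le d\sum_{1\le|\gamma|\le m+1}\big\|\x^{\delta+|\gamma|}\partial^\gamma u\big\|_{L^p}.
\]

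From here the norm equivalence is immediate. Setting $S:=\sum_{1\le|\gamma|\le m+1}\|\x^{\delta+|\gamma|}\partial^\gamma u\|_{L^p}$, I would get the upper bound from $\|u\|_{W^{m+1,p}_\delta}=\|\x^\delta u\|_{L^p}+S\le\|u\|_{W^{m,p}_\delta}+\sum_k\|\partial_k u\|_{W^{m,p}_{\delta+1}}=|u|_{W^{m+1,p}_\delta}$, using $\|\x^\delta u\|_{L^p}\le\|u\|_{W^{m,p}_\delta}$ and $S\le\sum_k\|\partial_k u\|_{W^{m,p}_{\delta+1}}$. For the reverse, $\|u\|_{W^{m,p}_\delta}\le\|u\|_{W^{m+1,p}_\delta}$ (it is a subsum) and $\sum_k\|\partial_k u\|_{W^{m,p}_{\delta+1}}\le dS\le d\|u\|_{W^{m+1,p}_\delta}$, whence $|u|_{W^{m+1,p}_\delta}\le(1+d)\|u\|_{W^{m+1,p}_\delta}$. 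This settles the equivalence on $W^{m+1,p}_\delta$.

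It then remains to establish the ``if and only if.'' The forward direction is easy: if $u\in W^{m+1,p}_\delta$ then $u\in W^{m,p}_\delta$ since the $W^{m,p}_\delta$-norm is dominated by the $W^{m+1,p}_\delta$-norm, and $\partial_k u\in W^{m,p}_{\delta+1}$ by the boundedness of $\partial_k:W^{m+1,p}_\delta\to W^{m,p}_{\delta+1}$ recalled at the start of this appendix. For the converse I would argue that if $u\in W^{m,p}_\delta$ and $\partial_k u\in W^{m,p}_{\delta+1}$ for all $k$, then every weak derivative $\partial^\gamma u$ with $|\gamma|\le m+1$ satisfies $\x^{\delta+|\gamma|}\partial^\gamma u\in L^p$: those with $|\gamma|\le m$ are controlled by $u\in W^{m,p}_\delta$, and those with $1\le|\gamma|\le m+1$ by writing $\gamma=\beta+\mathbf e_k$ and invoking $\partial_k u\in W^{m,p}_{\delta+1}$. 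Hence $u$ lies in the strong weighted space $\{f\in\Sz'\mid \x^{\delta+|\gamma|}\partial^\gamma f\in L^p,\ |\gamma|\le m+1\}$.

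The only genuinely nontrivial point is this last step: since $W^{m+1,p}_\delta$ is \emph{defined} as the closure of $C_c^\infty$ in the norm \eqref{W-norm}, concluding $u\in W^{m+1,p}_\delta$ requires knowing that this closure coincides with the strong space just described, i.e. the density of $C_c^\infty$ there. I expect this to be the main (and essentially only) obstacle; it is exactly the structural identification of the weighted Sobolev spaces established in \cite{Bartnik} and \cite[Lemma 2.2]{McOwenTopalov2}, and once it is invoked the proof closes, with the equivalence of norms already in hand. Everything else is the elementary multi-index counting above, which is why the lemma ``follows easily from the definition.''
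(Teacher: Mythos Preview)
Your proof is correct and is essentially what the paper has in mind: the paper does not actually give a proof of this lemma, stating only that it ``follows easily from the definition of the asymptotic spaces.'' Your multi-index bookkeeping makes that precise, and you correctly isolate the one step that is not pure bookkeeping, namely that membership in the closure $W^{m+1,p}_\delta$ of $C_c^\infty$ is the same as finiteness of the norm \eqref{W-norm}; this density is indeed part of the standard structural facts about these weighted spaces recorded in \cite{Bartnik} and \cite{McOwenTopalov2}.
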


\begin{Lem}\label{lem:equivalent_A-norms}
Assume that $m\ge 0$ and  $0\le n\le N$ are integer numbers. Then $u\in\A^{m+1,p}_{n,N;0}$ if and only if
$u\in\A^{m,p}_{n,N;0}$ and $\partial_k u\in\A^{m,p}_{n+1,N+1;-1}$. Moreover,
the norm in $\A^{m+1,p}_{n,N;0}$ is equivalent to the norm
\[
|u|_{\A^{m+1,p}_{n,N;0}}:=\|u\|_{\A^{m,p}_{n,N;0}}+
\sum_{k=1}^d\|\partial_k u\|_{\A^{m,p}_{n+1,N+1;-1}},\quad u\in\A^{m+1,p}_{n,N;0}.
\]
\end{Lem}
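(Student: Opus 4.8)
The plan is to prove the equivalence together with the two-sided norm bound by combining the explicit derivative formulas for asymptotic terms with the weighted-Sobolev characterization of Lemma \ref{lem:equivalent_W-norms} and elliptic regularity on the sphere. First I would dispose of the easy implication. If $u\in\A^{m+1,p}_{n,N;0}$, then Proposition \ref{pr:A-properties}(i) gives the continuous inclusion $\A^{m+1,p}_{n,N;0}\subseteq\A^{m,p}_{n,N;0}$, so $u\in\A^{m,p}_{n,N;0}$, while Proposition \ref{pr:A-properties}(ii) (applicable since $m+1\ge 1$) gives $\partial_k u\in\A^{m,p}_{n+1,N+1;-1}$ for $1\le k\le d$. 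Boundedness of both maps yields $|u|_{\A^{m+1,p}_{n,N;0}}\le C\,\|u\|_{\A^{m+1,p}_{n,N;0}}$, which is the estimate in one direction.

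For the converse I would write $u=a+f$ with $a$ the asymptotic part, having coefficients $a^j_k\in H^{m+1+N-k,p}(\s^{d-1})$, and remainder $f\in W^{m,p}_{\gamma_N}$; recall this decomposition is unique. Differentiating via $\partial_i(\chi(r)h)=\chi(r)\partial_i h+\chi'(r)(x_i/r)h$ and using \eqref{eq:d-hat_relation} (equivalently the derivative formula preceding \eqref{eq:diff_assympt_term}), I would split $\partial_i u=\partial_i a+\partial_i f$ into three pieces: a genuine asymptotic part of orders $n+1$ through $N+1$ coming from $\chi(r)\partial_i h$; a term $\chi'(r)(x_i/r)h$ supported in the annulus $\{1\le r\le 2\}$, which is smooth in $r$ and inherits the angular regularity $H^{\ge m+1,p}$ of the coefficients (as $k\le N$ forces $m+1+N-k\ge m+1$), hence lies in $W^{m+1,p}_\delta$ for every $\delta$; and $\partial_i f$. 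Since by hypothesis $\partial_i u\in\A^{m,p}_{n+1,N+1;-1}$ has remainder in $W^{m,p}_{\gamma_{N+1}}=W^{m,p}_{\gamma_N+1}$, uniqueness of the decomposition of $\partial_i u$ forces $\partial_i f$ to equal the remainder of $\partial_i u$ minus the annulus term, whence $\partial_i f\in W^{m,p}_{\gamma_N+1}$ for every $i$. As $f\in W^{m,p}_{\gamma_N}$, Lemma \ref{lem:equivalent_W-norms} then upgrades $f$ to $W^{m+1,p}_{\gamma_N}$, with the corresponding norm bound.

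It remains to upgrade the angular coefficients, and this is the step I expect to be the main obstacle. Matching asymptotic parts via the derivative formula, the coefficient of $(\log r)^j/r^{k+1}$ in $\partial_i u$ equals $\hat\partial_i a^j_k-k\,\theta_i a^j_k+(j+1)\theta_i a^{j+1}_k$ and, since $\partial_i u\in\A^{m,p}_{n+1,N+1;-1}$, lies in $H^{m+1+(N+1)-(k+1),p}(\s^{d-1})=H^{m+1+N-k,p}(\s^{d-1})$. Because $\theta_i$ is smooth on $\s^{d-1}$ and all $a^j_k,a^{j+1}_k$ already lie in $H^{m+1+N-k,p}$, solving for $\hat\partial_i a^j_k$ shows $\hat\partial_i a^j_k\in H^{m+1+N-k,p}(\s^{d-1})$ for every $i$. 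The crux is then that the operators $\hat\partial_1,\dots,\hat\partial_d$, being the tangential projections of $\partial_1,\dots,\partial_d$ (see \eqref{eq:d-hat}), have symbols spanning $T_\theta^{*}\s^{d-1}$ at every $\theta$; consequently $I+\sum_i\hat\partial_i^{\,*}\hat\partial_i$ is a second-order elliptic operator on the compact manifold $\s^{d-1}$, and $L^p$ elliptic regularity yields $\|b\|_{H^{s+1,p}}\le C\big(\|b\|_{H^{s,p}}+\sum_i\|\hat\partial_i b\|_{H^{s,p}}\big)$.

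Applying this last estimate with $b=a^j_k$ and $s=m+1+N-k$ shows $a^j_k\in H^{m+2+N-k,p}(\s^{d-1})$, which is exactly the angular regularity required for membership in $\A^{m+1,p}_{n,N;0}$. Assembling the coefficient bounds from the elliptic estimate with the remainder bound from Lemma \ref{lem:equivalent_W-norms}, and controlling the annulus contributions by the coefficient norms already present in $\|u\|_{\A^{m,p}_{n,N;0}}$, gives $\|u\|_{\A^{m+1,p}_{n,N;0}}\le C\,|u|_{\A^{m+1,p}_{n,N;0}}$. Together with the first paragraph this establishes both the equivalence of membership and the equivalence of norms.
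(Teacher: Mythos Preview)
Your argument is correct and supplies precisely the details the paper omits: the paper states only that the lemma ``follows easily from the definition of the asymptotic spaces'' and gives no proof. Your route---Proposition~\ref{pr:A-properties} for the forward direction, and for the converse, matching asymptotic coefficients via the explicit derivative formula, upgrading the remainder through Lemma~\ref{lem:equivalent_W-norms}, and upgrading the spherical coefficients via the overdetermined ellipticity of $(\hat\partial_1,\dots,\hat\partial_d)$ on $\s^{d-1}$---is the natural one and is essentially what ``follows easily from the definition'' must mean.

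One small point deserves to be made explicit. When you invoke uniqueness of the decomposition of $\partial_i u$ to identify its asymptotic part with the one computed from $\partial_i a$, note that the computed coefficients $\hat\partial_i a^j_k-k\theta_i a^j_k+(j+1)\theta_i a^{j+1}_k$ lie a priori only in $H^{m+N-k,p}$ (one derivative is lost to $\hat\partial_i$), and $\partial_i f$ lies a priori only in $W^{m-1,p}_{\gamma_{N+1}}$. Thus the uniqueness comparison should be carried out one regularity level down, in $\A^{m-1,p}_{n+1,N+1;-1}$ when $m\ge 1$, where both the computed decomposition and the hypothesised one are valid; for $m=0$ the same identification goes through by a direct distributional argument (pair against test functions of the form $\psi(\theta)\rho(r)$ with $\rho$ supported far out and use that nonzero asymptotic terms with $L^p(\s^{d-1})$ coefficients cannot lie in $L^p_{\gamma_{N+1}}$). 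Once the coefficients are identified, the hypothesised $H^{m+1+N-k,p}$ regularity of the $\partial_i u$ coefficients transfers back, and your elliptic estimate on the sphere yields $a^j_k\in H^{m+2+N-k,p}$ as written.
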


\noindent We will also need the following fact about the $W$-spaces.

\begin{Lem}\label{lem:properties_W}
Assume that $1<p<\infty$ and that $m\ge 0$ is an integer. Then we have:
\begin{itemize}
\item[(i)] The map
\[
H^{m,p}(\s^{d-1})\to W^{m,p}_{\gamma_N},\quad a(\theta)\mapsto a(\theta)/r^{N+1},
\]
is bounded.
\item[(ii)] Assume that $A\in C_c^\infty$ with support in the interval $[1,2]$. Then,
for any given weight $\delta\in\R$ we have that the map
\[
H^{m,p}(\s^{d-1})\to W^{m,p}_\delta,\quad a(\theta)\mapsto A(r)a(\theta),
\]
is bounded.
\end{itemize}
\end{Lem}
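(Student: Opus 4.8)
The plan is to work in polar coordinates and reduce both statements to estimates on the sphere via Fubini's theorem. The two tools I would use throughout are the identity \eqref{eq:d-hat_relation}, which lets me differentiate functions of the form $a(\theta)/r^k$ and express the outcome through the first-order spherical operators $\hat\partial_j$, together with the standard fact that a differential operator of order $\le m$ on $\s^{d-1}$ with $C^\infty$ coefficients maps $H^{m,p}(\s^{d-1})$ boundedly into $L^p(\s^{d-1})$. I would prove (ii) first, as it is the local model, and then invoke it to dispose of the cut-off region in (i).

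For (ii), the function $A(r)a(\theta)$ is supported in the compact annulus $\{1\le|x|\le2\}$, on which every weight $\x^{s}$ is bounded above and below by positive constants, so there the $W^{m,p}_\delta$-norm is equivalent to the unweighted $W^{m,p}$-norm. Iterating \eqref{eq:d-hat_relation} together with $\partial_j r=x_j/r$, I find that for $|\alpha|\le m$ the derivative $\partial^\alpha\big(A(r)a(\theta)\big)$ is a finite sum of terms $c(r,\theta)\,(D_\beta a)(\theta)$, where $c$ is smooth and bounded on the annulus (built from derivatives of $A$ and negative powers of $r$, all harmless for $r\in[1,2]$) and $D_\beta$ is a spherical differential operator of order $|\beta|\le|\alpha|\le m$ with $C^\infty$ coefficients. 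Passing to polar coordinates, $dx=r^{d-1}\,dr\,d\sigma(\theta)$, and using that $r^{d-1}$ and $c$ are bounded on $[1,2]$, I bound $\int_{\R^d}|\partial^\alpha(Aa)|^p\,dx$ by a constant times $\sum_{|\beta|\le m}\int_{\s^{d-1}}|(D_\beta a)(\theta)|^p\,d\sigma\lesssim\|a\|_{H^{m,p}(\s^{d-1})}^p$. Summing over $|\alpha|\le m$ gives (ii).

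For (i), I first note that $a(\theta)/r^{N+1}$ is to be understood with the cut-off $\chi$ from the definition of the asymptotic spaces (this is how the lemma is applied, e.g.\ in \eqref{eq:P_(2j_N+2)}); the factor $\chi(r)$ removes the singularity at the origin and renders $f(x):=\chi(r)a(\theta)/r^{N+1}$ smooth on $\R^d$, so that finiteness of its norm places it in $W^{m,p}_{\gamma_N}$ by a routine truncation argument. Differentiating $f$ by the same scheme produces two kinds of terms: those in which a derivative lands on $\chi$ are supported in $\{1\le|x|\le2\}$ and are estimated exactly as in (ii); the remaining terms, living in $\{r\ge2\}$ where $\chi\equiv1$, have the form $c(\theta)\,(D_\beta a)(\theta)\,r^{-(N+1)-|\alpha|}$ with $|\beta|\le|\alpha|$ and $c$ bounded. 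On $\{r\ge2\}$ one has $\x\asymp r$, so the weighted integrand satisfies $\x^{\gamma_N+|\alpha|}|\partial^\alpha f|\lesssim\sum_{|\beta|\le|\alpha|}|(D_\beta a)(\theta)|\,r^{\gamma_N-N-1}=\sum_{|\beta|\le|\alpha|}|(D_\beta a)(\theta)|\,r^{\gamma_0-1}$, the crucial cancellation being that the extra power $|\alpha|$ from the weight exactly matches the extra decay $|\alpha|$ gained under differentiation. In polar coordinates this contributes
\[
\Big(\int_2^\infty r^{(\gamma_0-1)p+d-1}\,dr\Big)\sum_{|\beta|\le m}\|D_\beta a\|_{L^p(\s^{d-1})}^p,
\]
and the radial integral converges precisely because $(\gamma_0-1)p+d-1<-1$, i.e.\ $\gamma_0+\tfrac{d}{p}<1$, which is exactly the constraint \eqref{def:gamma_N}. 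Bounding $\|D_\beta a\|_{L^p(\s^{d-1})}\lesssim\|a\|_{H^{m,p}(\s^{d-1})}$ then yields (i).

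The routine but error-prone part, where I would be most careful, is the derivative bookkeeping: one must verify that iterating \eqref{eq:d-hat_relation} never raises the order of the spherical operator above $|\alpha|\le m$ (so that the boundedness $H^{m,p}(\s^{d-1})\to L^p(\s^{d-1})$ applies) and never improves the decay beyond the claimed $r^{-(N+1)-|\alpha|}$. The only genuinely structural point is the convergence of the radial integral at infinity in (i), which is controlled entirely by the weight condition $0\le\gamma_0+\tfrac{d}{p}<1$; the behaviour at the origin is a non-issue once the cut-off $\chi$ is present.
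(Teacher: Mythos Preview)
Your argument is correct and is exactly the ``straightforward'' computation the paper has in mind; note that the paper omits the proof entirely. Your observation that the map in (i) must be read with the cut-off $\chi$ (as it is in every application in the paper) is well taken, and the key structural point---convergence of the radial integral via $\gamma_0+d/p<1$---is identified correctly.
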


\noindent The proof of this lemma is straightforward and hence omitted.

\begin{Rem}\label{rem:complexified_spaces}
Note that the statements of this Appendix hold without changes for the corresponding complexified spaces.
\end{Rem}

\section{Auxiliary results}\label{sec:aux-results}
In this Appendix we collect several auxiliary results used in the main body of the paper.
We first prove the following form of Peetre's inequality.

\begin{Lem}\label{lem:elementary}
For any $\delta\in\R$ there is a constant $C\equiv C_\delta>0$ such that
\begin{equation}\label{eq:elementary}
\x^\delta/\y^\delta\le C\langle x-y\rangle^{|\delta|}
\end{equation}
for any $x,y\in\R^d$. Moreover, for any given $\varepsilon_0>0$ one has
\begin{equation}\label{eq:elementary*}
\x^\delta/\y^\delta=1+O\big(|x-y|\big)
\end{equation}
with constant depending on $\delta>0$ and $\varepsilon_0>0$ but uniform in 
$x,y\in\R^d$ with $|x-y|\le\varepsilon_0$.
\end{Lem}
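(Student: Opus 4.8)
The plan is to establish the first inequality \eqref{eq:elementary} by reducing it to a submultiplicative bound on $\langle\cdot\rangle$, and then to derive the refined asymptotic \eqref{eq:elementary*} by a mean value argument in which \eqref{eq:elementary} is reused to control the resulting quotient uniformly.

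First I would prove the elementary bound
\[
\x\le\sqrt{2}\,\langle x-y\rangle\,\y,\quad x,y\in\R^d.
\]
This follows from $|x|\le|x-y|+|y|$, which gives $|x|^2\le 2|x-y|^2+2|y|^2$ and hence
\[
\x^2=1+|x|^2\le 2(1+|x-y|^2)+2|y|^2\le 2(1+|x-y|^2)(1+|y|^2)=2\langle x-y\rangle^2\y^2.
\]
For $\delta\ge0$, raising to the power $\delta$ yields $\x^\delta/\y^\delta\le 2^{\delta/2}\langle x-y\rangle^\delta$. For $\delta<0$ I would interchange the roles of $x$ and $y$ (using $\langle y-x\rangle=\langle x-y\rangle$) to get $\y^{|\delta|}\le 2^{|\delta|/2}\langle x-y\rangle^{|\delta|}\x^{|\delta|}$, i.e.\ $\x^\delta/\y^\delta=\y^{|\delta|}/\x^{|\delta|}\le 2^{|\delta|/2}\langle x-y\rangle^{|\delta|}$. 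In both cases \eqref{eq:elementary} holds with $C_\delta=2^{|\delta|/2}$.

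For \eqref{eq:elementary*} I would set $g(x):=\x^\delta=(1+|x|^2)^{\delta/2}$, a smooth function with $\nabla g(x)=\delta\,\x^{\delta-2}x$, so that $|\nabla g(x)|\le|\delta|\,\x^{\delta-1}$ since $|x|\le\x$. By the mean value theorem there is a point $\xi$ on the segment joining $x$ and $y$ with $\x^\delta-\y^\delta=\nabla g(\xi)\cdot(x-y)$, whence
\[
\Big|\frac{\x^\delta}{\y^\delta}-1\Big|=\frac{|\x^\delta-\y^\delta|}{\y^\delta}\le|\delta|\,\frac{\langle\xi\rangle^{\delta-1}}{\y^\delta}\,|x-y|.
\]
Since $\xi$ lies on that segment, $|\xi-y|\le|x-y|\le\varepsilon_0$, so $\langle\xi-y\rangle\le(1+\varepsilon_0^2)^{1/2}$; applying \eqref{eq:elementary} with exponent $\delta-1$ gives $\langle\xi\rangle^{\delta-1}\le C_{\delta-1}(1+\varepsilon_0^2)^{|\delta-1|/2}\y^{\delta-1}$. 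Combining this with $\y^{-1}\le1$ bounds the quotient $\langle\xi\rangle^{\delta-1}/\y^\delta$ by a constant depending only on $\delta$ and $\varepsilon_0$, and \eqref{eq:elementary*} follows.

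The argument is essentially routine, and I expect no genuine obstacle. The only point requiring care is the uniformity of the constant in \eqref{eq:elementary*}: this is precisely where the already-established inequality \eqref{eq:elementary} is invoked, to dominate $\langle\xi\rangle^{\delta-1}$ by $\y^{\delta-1}$ up to a factor depending only on $\varepsilon_0$ and $\delta$, so that the bound is independent of the individual points $x,y$.
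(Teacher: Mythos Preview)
Your argument is correct. For \eqref{eq:elementary} it coincides with the paper's proof: both derive the submultiplicative bound $\x\le\sqrt{2}\,\langle x-y\rangle\,\y$ from the triangle inequality, raise to the power $\delta\ge 0$, and handle $\delta<0$ by swapping $x$ and $y$.

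For \eqref{eq:elementary*} the approaches diverge. The paper first treats $\delta=2$ by a direct computation,
\[
\frac{\x^2}{\y^2}-1=\Big(x-y,\frac{2y}{1+|y|^2}\Big)+\frac{|x-y|^2}{1+|y|^2}\le(1+\varepsilon_0)\,|x-y|,
\]
and then reduces general $\delta$ to this case via the one--variable bound $(1+r)^{\delta/2}\le 1+C_{\delta,r_0}r$ for $|r|\le r_0$. Your route is instead a single mean--value step on $g(x)=\x^\delta$, followed by an application of the already--proved inequality \eqref{eq:elementary} (with exponent $\delta-1$) to dominate $\langle\xi\rangle^{\delta-1}/\y^{\delta-1}$ uniformly, together with $\y^{-1}\le 1$. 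Your version treats all $\delta$ at once and makes the dependence of the constant on $\delta$ and $\varepsilon_0$ fully explicit; the paper's version is slightly more hands--on for $\delta=2$ but then relies on a less explicit Taylor--type bound to pass to general~$\delta$. Both are valid and of comparable length.
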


\begin{proof}[Proof of Lemma \ref{lem:elementary}]
By the triangle inequality, we have 
$1+|y+z|^2\le 1+2(|y|^2+|z|^2)\le(1+|y|^2)(1+|z|^2)$ for all $y,z\in\R^d$. 
If $\delta>0$, this implies
\begin{equation}\label{eq:peetre}
\langle y+z\rangle^\delta\le 2^{\delta/2}\y^\delta \z^\delta\quad\forall y,z\in\R^d
\end{equation} 
and we let $z=x-y$ to obtain \eqref{eq:elementary}.
In view of \eqref{eq:peetre} we then obtain
$\langle y\rangle^\delta=\big\langle (y-x)+x\big\rangle^\delta\le 
2^{\delta/2}\langle y-x\rangle^\delta \x^\delta$ that proves
\eqref{eq:elementary} with $\delta<0$. 
The case when $\delta=0$ is trivial. Let us now prove the second statement of the lemma.
Take $\varepsilon_0>0$ and assume that $x,y\in\R^d$ satisfy $|x-y|\le\varepsilon_0$.
Then we have
\begin{eqnarray}\label{eq:delta=2}
\frac{\x^2}{\y^2}-1&=&\frac{\big|y+(x-y)\big|^2-|y|^2}{1+|y|^2}=
\Big(x-y,\frac{2y}{1+|y|^2}\Big)+\frac{|x-y|^2}{1+|y|^2}\nonumber\\
&\le&\frac{2|y|}{1+|y|^2}|x-y|+|x-y|^2\le(1+\varepsilon_0)|x-y|.
\end{eqnarray}
This proves the last statement of the lemma for $\delta=2$. In order to prove the general case we first note that
for any $r_0>0$ there exists $C\equiv C_{\delta,r_0}>0$ such that
$(1+r)^{\delta/2}\le 1+C r$ for any $|r|\le r_0$. By combining this with \eqref{eq:delta=2}
we conclude the proof of \eqref{eq:elementary*}.
\end{proof}

\medskip\medskip

Let $\Sz'$ be the space of tempered distributions. By definition, a curve $w : (0,T)\to\Sz'$, $T>0$, belongs to the class
$C^k\big((0,T),\Sz'\big)$ for integer $k\ge 0$ if for any test function $\varphi\in\Sz$ the map
\[
(0,T)\to\R,\quad t\mapsto\big\langle w(t),\varphi\big\rangle,
\]
where $\langle\cdot,\cdot\rangle$ denotes the pairing between $\Sz'$ and $\Sz$, 
belongs to $C^k\big((0,T),\R\big)$. One easily sees that $w\in C^k\big((0,T),\Sz'\big)$ implies that
there exist $w'\equiv w^{(1)},...,w^{(k)}\in C\big((0,T),\Sz'\big)$ such that for any $\varphi\in\Sz$,
\[
\langle w^{(j)}(t),\varphi\rangle=\frac{d^j}{dt^j}\big\langle w(t),\varphi\big\rangle,\quad 0\le j\le k\,.
\]
In a similar way one treats the case when the interval $(0,T)$ is replaced by $[0,T)$ or $[0,T]$.
We have the following

\begin{Lem}\label{le:uniqueness} 
Suppose that $w\in C\big([0,T),{\mathcal S}'\big)\cap C^1\big((0,T),{\mathcal S}'\big)$ satisfies $w'(t)=\nu\Delta w(t)$ as
tempered distributions for $t\in(0,T)$ and $w(0)=0$. Then $w(t)\equiv 0$ for all $t\in[0,T)$.
\end{Lem}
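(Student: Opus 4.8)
The plan is to pass to the Fourier transform, where the heat equation becomes a pointwise-in-$\xi$ linear ODE, and then to absorb a \emph{bounded} Gaussian integrating factor into the test function. Writing $\widehat w:=\F w$, the Fourier transform $\F:\Sz'\to\Sz'$ is a continuous linear isomorphism, so $\widehat w\in C\big([0,T),\Sz'\big)\cap C^1\big((0,T),\Sz'\big)$ as well, with $\widehat w\,'=\widehat{w'}$; this is immediate from the weak definition of the distributional derivative used in the excerpt, since $\langle\widehat w(t),\psi\rangle=\langle w(t),\F\psi\rangle$ for every $\psi\in\Sz$ and $\F\psi\in\Sz$. The equation $w'=\nu\Delta w$ then transforms into $\widehat w\,'(t)=-\nu|\xi|^2\,\widehat w(t)$ in $\Sz'$ for $t\in(0,T)$, and the hypothesis $w(0)=0$ becomes $\widehat w(0)=0$.

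Fix $t_0\in(0,T)$ and $\psi\in\Sz$. The key observation is that for $t\le t_0$ the function $\phi_t:=e^{-\nu(t_0-t)|\xi|^2}\psi$ is again Schwartz, since the Gaussian factor has exponent $\le0$ and is therefore bounded with all derivatives of at most polynomial growth; moreover $t\mapsto\phi_t$ is $C^1$ from $(0,t_0]$ into $\Sz$ with $\dot\phi_t=\nu|\xi|^2\phi_t$, and $\phi_{t_0}=\psi$. I would then consider
\[
g(t):=\big\langle\widehat w(t),\phi_t\big\rangle,\qquad t\in(0,t_0].
\]
Differentiating via the product rule for the pairing and using the transformed equation gives $g'(t)=\langle\widehat w\,'(t),\phi_t\rangle+\langle\widehat w(t),\dot\phi_t\rangle=\langle\widehat w(t),-\nu|\xi|^2\phi_t\rangle+\langle\widehat w(t),\nu|\xi|^2\phi_t\rangle=0$, so $g$ is constant on $(0,t_0)$.

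To conclude, I would take one-sided limits. As $t\to0^+$, the continuity of $\widehat w$ at $0$ and the convergence $\phi_t\to e^{-\nu t_0|\xi|^2}\psi$ in $\Sz$ give $g(t)\to\langle\widehat w(0),e^{-\nu t_0|\xi|^2}\psi\rangle=0$; since $g$ is continuous on $(0,t_0]$ and constant on the interior, $g\equiv0$. Evaluating at $t=t_0$ (where $\phi_{t_0}=\psi$) yields $\langle\widehat w(t_0),\psi\rangle=0$. As $\psi\in\Sz$ and $t_0\in(0,T)$ were arbitrary, $\widehat w(t_0)=0$, hence $w(t_0)=0$; together with $w(0)=0$ this proves $w\equiv0$ on $[0,T)$.

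The step I expect to be most delicate is justifying the product rule for $g$ under the \emph{weak} notion of differentiability in the excerpt (where $C^1$ means only that each scalar pairing against a fixed test function is $C^1$), because here the test function $\phi_t$ itself moves with $t$. I would handle this through difference quotients: the term $\big\langle\widehat w(t+h),(\phi_{t+h}-\phi_t)/h\big\rangle$ converges to $\langle\widehat w(t),\dot\phi_t\rangle$, using that $(\phi_{t+h}-\phi_t)/h\to\dot\phi_t$ in $\Sz$ while $\{\widehat w(t+h)\}$ is a convergent, hence bounded and therefore equicontinuous, family in $\Sz'$ (Banach--Steinhaus on the Fréchet space $\Sz$); the remaining term $\big\langle(\widehat w(t+h)-\widehat w(t))/h,\phi_t\big\rangle$ converges to $\langle\widehat w\,'(t),\phi_t\rangle$ directly from the weak-$C^1$ hypothesis with the fixed test function $\phi_t$. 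The only other point requiring care is that $\phi_t$ must remain in $\Sz$ and depend smoothly on $t$, which is precisely why the integrating factor is anchored at $t_0$ in the decaying form $e^{-\nu(t_0-t)|\xi|^2}$ (exponent $\le0$ for $t\le t_0$) rather than using the naive factor $e^{\nu t|\xi|^2}$, which grows in $\xi$ and is not a Schwartz multiplier.
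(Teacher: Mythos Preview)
Your proof is correct and is essentially the Fourier-side version of the paper's argument: the paper applies Holmgren's principle directly, pairing $w(t)$ against the backward-heat test function $v(t)=G_{t_0-t}*\varphi$, while your moving test function $\phi_t=e^{-\nu(t_0-t)|\xi|^2}\psi$ is precisely $\widehat{v(t)}$ with $\psi=\widehat\varphi$. The product-rule step you flag as delicate is handled in the paper by the same uniform-boundedness argument you outline (their Lemma~\ref{lem:pairing}).
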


\begin{proof}[Proof of Lemma \ref{le:uniqueness}]
We apply Holmgren's principle. Take $t_0\in(0,T)$ and for any given $\varphi\in\Sz$ consider the curve
\[
(0,t_0)\to\Sz,\quad t\mapsto v(t):=G_{t_0-t}*\varphi,
\]
where $G_t(x)=\frac{1}{(4\nu\pi t)^{d/2}}e^{-\frac{|x|^2}{4\nu t}}$. Then, 
\[
v\in C\big([0,t_0),\Sz\big)\cap C^1\big((0,T),\Sz\big)
\]
and for any $t\in(0,t_0)$,
\begin{equation}\label{eq:heat_equation_v}
v'(t)=-\nu\Delta v(t),\quad v(t_0)=\varphi,
\end{equation}
where $v'(t)$ denotes the derivative of $v$ at time $t$ in the Frech\'et space $\Sz$.
Now, consider the curve $(0,t_0)\to\R$, $t\mapsto\big\langle w(t),v(t)\big\rangle$. For any $t\in(0,t_0)$ we obtain
from \eqref{eq:heat_equation_v} that
\begin{eqnarray}
\frac{d}{dt}\big\langle w(t),v(t)\big\rangle&=&\big\langle w'(t),v(t)\big\rangle+\big\langle w(t),v'(t)\big\rangle\label{eq:product_formula}\\
&=&\nu\big\langle\Delta w(t),v(t)\big\rangle-\nu\big\langle w(t),\Delta v(t)\big\rangle=0\label{eq:zero_derivative}.
\end{eqnarray}
The product formula \eqref{eq:product_formula} follows easily from Lemma \ref{lem:pairing} below.
Since $w\in C\big([0,t_0],\Sz'\big)$ and $v\in C\big([0,t_0],\Sz\big)$ we obtain from Lemma \ref{lem:pairing}
and \eqref{eq:heat_equation_v} that
\begin{equation}\label{eq:limits}
\lim_{t\to 0+}\big\langle w(t),v(t)\big\rangle=\big\langle w(0),v(t_0)\big\rangle=0\quad\text{\rm and}\quad
\lim_{t\to t_0-}\big\langle w(t),v(t)\big\rangle=\big\langle w(t_0),\varphi\big\rangle.
\end{equation}
This together with \eqref{eq:zero_derivative} then implies that for any $t_0\in(0,T)$ we have that $\big\langle w(t_0),\varphi\big\rangle=0$
for any $\varphi\in\Sz$. This completes the proof of the lemma.
\end{proof}

In the proof of Lemma \ref{le:uniqueness} we use the following 

\begin{Lem}\label{lem:pairing}
Let $-\infty<a<b<\infty$. Then one has:
\begin{itemize}
\item[(i)] If $u\in C\big([a,b],\Sz'\big)$ and $v\in C\big([a,b],\Sz\big)$ then
$[a,b]\to\R$, $t\mapsto\big\langle u(t),v(t)\big\rangle$, is continuous.
\item[(ii)] If $u\in C^1\big((a,b),\Sz'\big)$ and $v\in C^1\big((a,b),\Sz\big)$ then
$(a,b)\to\R$, $t\mapsto\big\langle u(t),v(t)\big\rangle$, is continuously differentiable on $(a,b)$ and 
$\frac{d}{dt}\big\langle u(t),v(t)\big\rangle=\big\langle u'(t),v(t)\big\rangle+\big\langle u(t),v'(t)\big\rangle$.
\end{itemize}
\end{Lem}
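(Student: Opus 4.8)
The plan is to prove both statements by exploiting that $\Sz$ is a Fréchet space, hence barrelled, so that the Banach--Steinhaus (uniform boundedness) theorem is available. The single nontrivial ingredient is an equicontinuity bound: whenever a family $\{T_s\}\subseteq\Sz'$ is weak-$*$ bounded, i.e. $\sup_s|\langle T_s,\varphi\rangle|<\infty$ for every $\varphi\in\Sz$, barrelledness of $\Sz$ yields a continuous seminorm $p$ on $\Sz$ and a constant $C>0$ with $|\langle T_s,\psi\rangle|\le C\,p(\psi)$ for all $s$ and all $\psi\in\Sz$. This is exactly what is needed to control the \emph{cross terms} in which the test function itself is moving, where the weak-$*$ continuity or differentiability of $u$ alone gives no information.

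For (i), I would fix $t_0\in[a,b]$ and write
\[
\langle u(t),v(t)\rangle-\langle u(t_0),v(t_0)\rangle=\langle u(t),v(t)-v(t_0)\rangle+\langle u(t)-u(t_0),v(t_0)\rangle.
\]
The second term tends to $0$ as $t\to t_0$ by the weak-$*$ continuity of $u$, tested against the fixed element $v(t_0)\in\Sz$. For the first term, I note that $\{u(t)\}_{t\in[a,b]}$ is weak-$*$ bounded: for each fixed $\varphi$ the scalar map $t\mapsto\langle u(t),\varphi\rangle$ is continuous on the compact interval $[a,b]$, hence bounded. The equicontinuity bound above then gives $|\langle u(t),v(t)-v(t_0)\rangle|\le C\,p(v(t)-v(t_0))$, and $p(v(t)-v(t_0))\to 0$ because $v$ is continuous into the Fréchet space $\Sz$. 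This proves (i).

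For (ii), I would fix $t_0\in(a,b)$ and decompose the difference quotient as
\[
\frac{\langle u(t_0+h),v(t_0+h)\rangle-\langle u(t_0),v(t_0)\rangle}{h}
=\Big\langle\tfrac{u(t_0+h)-u(t_0)}{h},\,v(t_0)\Big\rangle
+\Big\langle\tfrac{u(t_0+h)-u(t_0)}{h},\,v(t_0+h)-v(t_0)\Big\rangle
+\Big\langle u(t_0),\,\tfrac{v(t_0+h)-v(t_0)}{h}\Big\rangle.
\]
The first summand converges to $\langle u'(t_0),v(t_0)\rangle$ by the weak-$*$ differentiability of $u$ against the fixed $v(t_0)$, and the third converges to $\langle u(t_0),v'(t_0)\rangle$ since $\big(v(t_0+h)-v(t_0)\big)/h\to v'(t_0)$ in $\Sz$ while $u(t_0)$ is a fixed functional. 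For the middle (cross) term I would apply Banach--Steinhaus to the family of difference quotients $\{(u(t_0+h)-u(t_0))/h\}_{0<|h|<\delta}$, which is weak-$*$ bounded because for each $\varphi$ the scalar difference quotient converges to $\langle u'(t_0),\varphi\rangle$ and is therefore bounded near $h=0$; the resulting equicontinuity bound gives $|\langle(u(t_0+h)-u(t_0))/h,\,v(t_0+h)-v(t_0)\rangle|\le C\,p(v(t_0+h)-v(t_0))\to 0$. Adding the three limits establishes differentiability and the stated product formula at $t_0$.

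Finally, to obtain the $C^1$ conclusion I would observe that $\tfrac{d}{dt}\langle u(t),v(t)\rangle=\langle u'(t),v(t)\rangle+\langle u(t),v'(t)\rangle$ is continuous on $(a,b)$ by applying part (i) to the pairs $(u',v)$ and $(u,v')$, noting that $u',u\in C\big((a,b),\Sz'\big)$ and $v',v\in C\big((a,b),\Sz\big)$ by hypothesis. The main obstacle throughout is precisely these moving-test-function cross terms: weak-$*$ control of $u$ is by itself insufficient, and the essential device is the uniform boundedness principle on the barrelled space $\Sz$, which upgrades pointwise weak-$*$ bounds to a single uniform seminorm estimate.
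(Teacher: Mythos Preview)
Your proof is correct and follows essentially the same approach as the paper: both arguments hinge on the uniform boundedness principle in $\Sz'$ (equivalently, barrelledness of the Fr\'echet space $\Sz$) to upgrade weak-$*$ bounds on $\{u(t)\}$ to a single seminorm estimate, which is then used to control the cross term where the test function moves. Your treatment of (ii) is in fact more explicit than the paper's, which simply states that ``the proof of (ii) follows in a similar way''; your three-term decomposition of the difference quotient and the use of (i) applied to $(u',v)$ and $(u,v')$ to deduce continuity of the derivative are exactly the details one would fill in.
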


\begin{proof}[Proof of Lemma \ref{lem:pairing}]
The lemma follows from the uniform boundedness principle in $\Sz'$. In fact, let us prove item (i):
Take $t_*\in[a,b]$ and assume that $u,v\in C\big((a,b),\Sz'\big)$ and $v\in C\big((a,b),\Sz\big)$.
Take a sequence $(t_k)_{k\ge 1}$ in $[a,b]$ such that $t_k\to t_*$ as $k\to\infty$. Since $u\in C\big((a,b),\Sz'\big)$
we have that $\langle u(t_k),\varphi\rangle\to\langle u(t_*),\varphi\rangle$ as $k\to\infty$ for any given $\varphi\in\Sz$.
This implies that the family of distributions $\big\{u(t_k)\,\big|\,k\ge 1\big\}$ in $\Sz'$ is weakly bounded. 
Then, the uniform boundedness principle implies that there exist integers $m,l\ge 0$ and a constant $C>0$ such that 
for any $k\ge 1$ and for any $\varphi\in\Sz$,
\begin{equation}\label{eq:uniform_boundedness}
\big\langle u(t_k),\varphi\big\rangle\le C\,p_{l,m}(\varphi),\quad 
p_{l,m}(\varphi):=\sup_{x\in\R^d, |\alpha|\le m}\big|\x^l(\partial^\alpha\varphi)(x)\big|.
\end{equation}
For any given $k\ge 1$ we have 
\[
\big|\big\langle u(t_k),v(t_k)\big\rangle-\big\langle u(t_k),v(t_*)\big\rangle\big|\le
\big|\big\langle u(t_k),v(t_k)-v(t_*)\big\rangle\big|
\le C\,p_{l,m}\big(v(t_k)-v(t_*)\big).
\]
This together with the fact that $v(t_k)\stackrel{\Sz'}{\to}v(t_*)$ and $u(t_k)\stackrel{\Sz'}{\to}u(t_*)$ as $k\to\infty$
implies that 
\[
\big\langle u(t_k),v(t_k)\big\rangle\to\big\langle u(t_*),v(t_*)\big\rangle\quad\text{\rm as}\quad k\to\infty
\]
This proves (i). The proof of (ii) follows in a similar way.
\end{proof}

\medskip

For any $t\ge 0$ and $u_0\in\Sz'$ define the transformation $S_\nu(t) : \Sz'\to\Sz'$ by
\begin{equation}\label{eq:S_nu_in_S'}
\big\langle S_\nu(t)u_0,\varphi\big\rangle=\big\langle u_0,G_t*\varphi\big\rangle,\quad\varphi\in\Sz.
\end{equation}
The map $S_\nu(t) : \Sz'\to\Sz'$ extends the heat flow $S_\nu(t) : \Sz\to\Sz$
given by the {\em Poisson integral} $S_\nu(t)\varphi:=G_t*\varphi$, $\varphi\in\Sz$ (see \eqref{eq:Gaussian}).
We have

\begin{Lem}\label{lem:heat_flow_in_S'}
For any given $u\in\Sz'$ one has the following:
\begin{itemize}
\item[(i)] The map 
\begin{equation}\label{eq:heat_flow_in_S'}
[0,\infty)\to\Sz',\quad t\mapsto S_\nu(t)u
\end{equation}
lies in $C\big([0,\infty),\Sz'\big)\cap C^1\big((0,\infty),\Sz'\big)$.
\item[(ii)] The curve $u(t):=S_\nu(t)u_0$, $t\ge 0$, satisfies the heat equation
\begin{equation}\label{eq:heat_equation_appendix}
u'(t)=\nu\Delta u(t),\quad u(0)=u_0,
\end{equation}
in $\Sz'$.
\item[(iii)] For any $t_1, t_2\ge 0$ one has that $S_\nu(t_1+t_2)=S_\nu(t_1) S_\nu(t_2)$.
\end{itemize}
\end{Lem}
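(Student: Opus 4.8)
The plan is to derive every assertion by duality, transposing the corresponding property of the Poisson integral $\varphi\mapsto G_t*\varphi$ acting on the test space $\Sz$. So the first and central step is to record the behaviour of the curve $P_\varphi:t\mapsto G_t*\varphi$ viewed in the Fr\'echet space $\Sz$, for a fixed $\varphi\in\Sz$. Concretely I would establish: (a) $G_t*\varphi\in\Sz$ for every $t\ge 0$ (with the convention $G_0*\varphi:=\varphi$); (b) $P_\varphi\in C\big([0,\infty),\Sz\big)\cap C^\infty\big((0,\infty),\Sz\big)$, with $\partial_t(G_t*\varphi)=\nu\Delta(G_t*\varphi)=\nu\,G_t*\Delta\varphi$; and (c) $G_t*\varphi\to\varphi$ in $\Sz$ as $t\to 0+$. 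Since differentiation commutes with convolution, $\partial^\alpha(G_t*\varphi)=G_t*\partial^\alpha\varphi$, so each Schwartz seminorm $p_{l,m}$ (cf. \eqref{eq:uniform_boundedness}) of $G_t*\varphi$ is controlled by standard Gaussian/approximate-identity estimates together with Peetre's inequality (Lemma \ref{lem:elementary}), which lets one replace the weight $\x^l$ by $\y^l\langle x-y\rangle^l$ inside the convolution integral.

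With (a)--(c) in hand, parts (i) and (ii) follow by transposition. For continuity, take $t_k\to t_*\ge 0$; then $\langle S_\nu(t_k)u,\varphi\rangle=\langle u,G_{t_k}*\varphi\rangle\to\langle u,G_{t_*}*\varphi\rangle=\langle S_\nu(t_*)u,\varphi\rangle$, because $G_{t_k}*\varphi\to G_{t_*}*\varphi$ in $\Sz$ by (b)--(c) and $u$ is continuous on $\Sz$; this gives $t\mapsto S_\nu(t)u$ in $C\big([0,\infty),\Sz'\big)$. For the $C^1$ statement and the heat equation, I would differentiate under the pairing: by (b) the difference quotient $h^{-1}\big(G_{t+h}*\varphi-G_t*\varphi\big)$ converges in $\Sz$ to $\nu\,G_t*\Delta\varphi$, and continuity of $u$ permits passing to the limit, so $\frac{d}{dt}\langle S_\nu(t)u,\varphi\rangle=\nu\langle u,G_t*\Delta\varphi\rangle=\nu\langle S_\nu(t)u,\Delta\varphi\rangle=\nu\langle\Delta S_\nu(t)u,\varphi\rangle$. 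The right-hand side is continuous in $t$ (again by transposition of the $\Sz$-continuity of $t\mapsto\nu\,G_t*\Delta\varphi$), which yields $t\mapsto S_\nu(t)u$ in $C^1\big((0,\infty),\Sz'\big)$ and simultaneously the identity $u'(t)=\nu\Delta u(t)$ of \eqref{eq:heat_equation_appendix}; the initial condition $S_\nu(0)u_0=u_0$ is exactly (c).

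Part (iii) reduces to the convolution semigroup identity $G_{t_1}*G_{t_2}=G_{t_1+t_2}$ for the Gaussians, an elementary completion-of-the-square computation. Granting it, associativity of convolution gives, for any $\varphi\in\Sz$,
\[
\langle S_\nu(t_1+t_2)u,\varphi\rangle=\langle u,G_{t_1}*(G_{t_2}*\varphi)\rangle
=\langle S_\nu(t_1)u,G_{t_2}*\varphi\rangle=\langle S_\nu(t_2)S_\nu(t_1)u,\varphi\rangle,
\]
whence $S_\nu(t_1+t_2)=S_\nu(t_1)S_\nu(t_2)$ (the two operators commuting because the Gaussian identity is symmetric in $t_1,t_2$).

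I expect the only genuine work to be step (b)/(c): proving the $\Sz$-convergence $G_t*\varphi\to\varphi$ and the $\Sz$-smoothness, i.e. convergence in \emph{every} weighted seminorm rather than merely in $L^\infty$ or pointwise. The key estimate is to write $\big(G_t*\varphi-\varphi\big)(x)=\int G_t(y)\big(\varphi(x-y)-\varphi(x)\big)\,dy$, split the integral at $|y|=R\sqrt{t}$, bound the near part by the mean value theorem using $\nabla\varphi\in\Sz$, and bound the far part by the rapid decay of the Gaussian; the weight $\x^l$ is absorbed uniformly via Lemma \ref{lem:elementary}. All remaining manipulations are routine transpositions, so the duality scheme organizes the proof cleanly.
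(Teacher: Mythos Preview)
Your duality argument on the physical side is correct and complete: you verify that $t\mapsto G_t*\varphi$ is $C^\infty$ in $\Sz$ for each $\varphi\in\Sz$, and then transpose. The paper takes a genuinely different (and shorter) route: it passes to the Fourier side, where by \eqref{eq:heat_flow_in_fourier_coordinates} the action of $S_\nu(t)$ becomes pointwise multiplication by $e^{-\nu|\xi|^2 t}$. The seminorm estimates you work to prove in (b)--(c) then become trivial, since one only needs that $t\mapsto e^{-\nu|\xi|^2 t}\widehat\varphi(\xi)$ is smooth as a curve in $\Sz$, which is immediate from direct differentiation (each $t$- or $\xi$-derivative introduces only polynomial factors, absorbed by the rapid decay of $\widehat\varphi$). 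The semigroup identity (iii) is likewise instantaneous on the Fourier side, being just $e^{-\nu|\xi|^2(t_1+t_2)}=e^{-\nu|\xi|^2 t_1}e^{-\nu|\xi|^2 t_2}$. Your approach has the virtue of being self-contained and not invoking the Fourier transform on $\Sz'$, at the cost of the approximate-identity and Peetre-type estimates you outline; the paper's approach buys brevity by offloading all the work to the standard fact that the Fourier transform is a topological automorphism of $\Sz$ and $\Sz'$.
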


\begin{proof}[Proof of Lemma \ref{lem:heat_flow_in_S'}]
The lemma easily follows from the formula
\begin{equation}\label{eq:heat_flow_in_fourier_coordinates}
\widehat{(S_\nu(t)u_0)}(\xi)=e^{-|\xi|^2t}\widehat{u}_0(\xi)
\end{equation}
where $\widehat{(\cdot)}$ denotes the the Fourier transform in $\Sz'$ -- see e.g. \cite{Shubin}, Ch. 7.
\end{proof}

\begin{Rem}\label{rem:weak_analyticity_in_S'}
In fact, it follows from \eqref{eq:heat_flow_in_fourier_coordinates} that for any $\varepsilon>0$ 
the map \eqref{eq:heat_flow_in_S'} extends to an analytic map for $t$ in the half plane 
$\big\{t\in\C\,\big|\,\re(t)>\varepsilon\big\}$.
\end{Rem} 

By combining Lemma \ref{lem:heat_flow_in_S'} with Lemma \ref{le:uniqueness} we obtain
the following

\begin{Prop}\label{prop:heat_equation_in_S'}
For any initial data $u_0\in\Sz'$ the heat equation \eqref{eq:heat_equation_appendix} has a unique solution in the class
$C\big([0,\infty),\Sz'\big)\cap C^1\big((0,\infty),\Sz'\big)$
which is given by $u(t)=S_\nu(t)u_0$, $t\ge 0$, and $S_\nu(t)$ is defined by \eqref{eq:S_nu_in_S'}.
\end{Prop}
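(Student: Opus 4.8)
The plan is to combine the two preceding lemmas: Lemma \ref{lem:heat_flow_in_S'} supplies a solution of the required regularity, while Lemma \ref{le:uniqueness} rules out any competitor.

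First I would dispatch existence directly. By Lemma \ref{lem:heat_flow_in_S'}~(i) the curve $u(t):=S_\nu(t)u_0$ lies in $C\big([0,\infty),\Sz'\big)\cap C^1\big((0,\infty),\Sz'\big)$, and by part~(ii) it satisfies $u'(t)=\nu\Delta u(t)$ in $\Sz'$ for $t>0$ together with $u(0)=u_0$. Hence $u(t)=S_\nu(t)u_0$ is a solution of \eqref{eq:heat_equation_appendix} in the stated class, and nothing further is needed for this half of the statement.

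For uniqueness I would take two solutions $u_1,u_2$ in the class $C\big([0,\infty),\Sz'\big)\cap C^1\big((0,\infty),\Sz'\big)$ with the same initial data $u_0$, and set $w:=u_1-u_2$. Since $\Sz'$ and the operations of weak time differentiation and of applying $\Delta$ on $\Sz'$ are all linear, $w$ again lies in $C\big([0,\infty),\Sz'\big)\cap C^1\big((0,\infty),\Sz'\big)$ and satisfies $w'(t)=\nu\Delta w(t)$ for $t>0$ with $w(0)=0$. For an arbitrary $T>0$ the restriction $w|_{[0,T)}$ meets exactly the hypotheses of Lemma \ref{le:uniqueness}, which yields $w(t)\equiv 0$ on $[0,T)$; letting $T\to\infty$ gives $w\equiv 0$ on $[0,\infty)$, that is $u_1=u_2$. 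Together with existence this identifies the unique solution as $u(t)=S_\nu(t)u_0$.

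The argument is essentially bookkeeping, so I do not expect a genuine obstacle; the only point deserving care is verifying that $w$ inherits both the regularity $C^1\big((0,\infty),\Sz'\big)$ and the distributional identity $w'=\nu\Delta w$ from $u_1$ and $u_2$, which is where the linearity of the derivative in $\Sz'$ recorded in the discussion preceding Lemma \ref{le:uniqueness} is invoked. The passage from the finite-interval conclusion of Lemma \ref{le:uniqueness} to the whole half-line is immediate once $T$ is taken arbitrary.
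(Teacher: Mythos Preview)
Your proof is correct and follows exactly the approach indicated in the paper, which simply states that the proposition is obtained ``by combining Lemma \ref{lem:heat_flow_in_S'} with Lemma \ref{le:uniqueness}'' without spelling out the details. Your write-up merely makes explicit the existence/uniqueness split that the paper leaves implicit.
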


\begin{Coro}\label{coro:exp=S}
Let $X$ be a Banach space of functions on $\R^n$ that is continuously embedded into $\Sz'$, $X\subseteq \Sz'$, and let 
$\{e^{t \nu A}\}_{t\ge 0}$ be a strongly continuous semigroup in $X$ with generator $A : D(A)\to X$ so that $A$ coincides with 
the weak Laplacian $\Delta : \Sz'\to\Sz'$ restricted to $D(A)$. Then $e^{t\nu A}=S_\nu(t)|_{X}$ for any $t\ge 0$.
\end{Coro}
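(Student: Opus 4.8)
The plan is to deduce the identity from the $\Sz'$-uniqueness already established in Proposition \ref{prop:heat_equation_in_S'}, first on the domain of the generator and then on all of $X$ by a density argument. Write $\iota:X\hookrightarrow\Sz'$ for the embedding, which is injective and continuous, and recall that the infinitesimal generator of the strongly continuous semigroup $\{e^{t\nu A}\}_{t\ge0}$ is $\nu A$, whose domain $D(A)$ is dense in $X$ and invariant under the semigroup.

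First I would fix $u_0\in D(A)$ and set $u(t):=e^{t\nu A}u_0$. By the standard theory of $C_0$-semigroups (see e.g. \cite{Pazy}) one has $u\in C^1\big([0,\infty),X\big)$, $u(t)\in D(A)$ for all $t\ge0$, $u(0)=u_0$, and $u'(t)=\nu A\,u(t)$ with the derivative taken in the $X$-norm. Applying the continuous linear map $\iota$ gives $\iota u\in C\big([0,\infty),\Sz'\big)\cap C^1\big((0,\infty),\Sz'\big)$ with $(\iota u)'(t)=\iota\big(\nu A\,u(t)\big)$. By hypothesis $A$ is the restriction of the weak Laplacian to $D(A)$, that is $\iota(A x)=\Delta(\iota x)$ for $x\in D(A)$, so that $(\iota u)'(t)=\nu\,\Delta\big(\iota u(t)\big)$ in $\Sz'$ and $\iota u(0)=\iota u_0$. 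Hence $\iota u$ solves the heat equation \eqref{eq:heat_equation_appendix}, and Proposition \ref{prop:heat_equation_in_S'} forces $\iota u(t)=S_\nu(t)\big(\iota u_0\big)$ for all $t\ge0$. Identifying $X$ with its image in $\Sz'$, this is precisely $e^{t\nu A}u_0=S_\nu(t)u_0$ for every $u_0\in D(A)$.

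Next I would remove the restriction to the domain. Fix $t\ge0$ and $u_0\in X$, and choose $u_0^{(n)}\in D(A)$ with $u_0^{(n)}\to u_0$ in $X$, which is possible since $D(A)$ is dense. Then $e^{t\nu A}u_0^{(n)}\to e^{t\nu A}u_0$ in $X$ by boundedness of $e^{t\nu A}$ on $X$, hence also in $\Sz'$; on the other hand $u_0^{(n)}\to u_0$ in $\Sz'$ by continuity of $\iota$ and $S_\nu(t):\Sz'\to\Sz'$ is continuous by its definition \eqref{eq:S_nu_in_S'}, so $S_\nu(t)u_0^{(n)}\to S_\nu(t)u_0$ in $\Sz'$. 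Since $e^{t\nu A}u_0^{(n)}=S_\nu(t)u_0^{(n)}$ for every $n$, passing to the limit in $\Sz'$ yields $e^{t\nu A}u_0=S_\nu(t)u_0$; as the left-hand side lies in $X$ and $\iota$ is injective, the equality holds in $X$, i.e. $e^{t\nu A}=S_\nu(t)|_X$.

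I expect no genuinely hard step here, since Proposition \ref{prop:heat_equation_in_S'} (ultimately Holmgren's argument in Lemma \ref{le:uniqueness}) does the real work of pinning down the solution. The only points demanding care are bookkeeping: keeping track of which limits are taken in $X$ and which in $\Sz'$, checking that the $X$-valued time derivative descends to an $\Sz'$-valued derivative under $\iota$, and using the identification $\iota\circ A=\Delta\circ\iota$ on $D(A)$ so that the transported curve genuinely satisfies \eqref{eq:heat_equation_appendix} rather than some a priori different equation.
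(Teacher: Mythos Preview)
Your proof is correct and follows essentially the same line as the paper's: establish the identity on $D(A)$ via the $\Sz'$-uniqueness of Proposition \ref{prop:heat_equation_in_S'}, then extend to all of $X$ by density. Your density step is in fact spelled out more carefully than the paper's, which leaves the passage to the limit implicit.
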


\begin{proof}[Proof of Corollary \ref{coro:exp=S}]
Since $\{e^{t \nu A}\}_{t\ge 0}$ is a strongly continuous semigroup with generator $A : D(A)\to X$ we conclude that for any
$u_0\in D(A)$, the curve $u(t):=e^{t\nu A}u_0$ belongs to the space
\begin{equation}\label{eq:D(A)-evolution}
u\in C\big([0,\infty),D(A)\big)\cap C^1\big([0,\infty),X\big)
\end{equation}
where $D(A)$ is a dense subspace of $X$ which is complete when equipped with the graph-norm $\|x\|_D:=\|x\|_X+\|Ax\|_X$, $x\in D(A)$.
Moreover, $u(t)$, $t\ge 0$, satisfies the heat equation \eqref{eq:heat_equation_appendix} since 
\[
u'(t)=\nu A u(t)=\nu\Delta u(t),\quad u(0)=u_0,
\]
by the assumption that $A=\Delta|_{D(A)}$. On the other side, since $X$ is continuously embedded into $\Sz'$,
\eqref{eq:D(A)-evolution} implies that $u\in C\big([0,\infty),\Sz'\big)\cap C^1\big([0,\infty),\Sz'\big)$.
Then, by Proposition \ref{prop:heat_equation_in_S'}, we conclude that $e^{t\nu A}u_0=S_\nu(t)u_0$ for any $t\ge 0$.
Since this holds for any $u_0\in D(A)$ and since $D(A)$ is dense in $X$ we conclude that $e^{t\nu A}=S_\nu(t)|_{X}$ for any $t\ge 0$.
\end{proof}

In Section \ref{sec:W-spaces} we use the following lemma.

\begin{Lem}\label{lem:claderon-zygmund}
Assume that $1<p<\infty$, $d\ge 3$, and let $A : \s^{d-1}\to\R$ be a Lipschitz continuous function of mean value zero
on the unit sphere $\s^{d-1}$ and let $\eta\in C_c^\infty(\R)$ be such that $\eta\equiv 1$ in an open neighborhood of zero in $\R$.
Then the convolution operator $\mathcal{T} : \varphi\mapsto T*\varphi$, $\varphi\in C_c^\infty$, where
\[
T(x):=\mathop{\rm P.V.}\,\frac{A(x/|x|)}{|x|^d}\,\eta(|x|)
\]
and $\mathop{\rm P.V.}$ denotes Cauchy's principal value,
extends to a bounded linear map $\mathcal{T} : L^p\to L^p$.
\end{Lem}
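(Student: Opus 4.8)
The plan is to recognize $\mathcal T$ as a Calder\'on--Zygmund convolution operator whose kernel $T$ is compactly supported (since $\eta\in C_c^\infty(\R)$) with a single singularity at the origin, and then to verify the two ingredients of the general Calder\'on--Zygmund theorem (see \cite{Stein}, Ch. II): boundedness on $L^2$ together with the H\"ormander regularity condition on the kernel. Once these are in place, the general theory yields that $\mathcal T$ is of weak type $(1,1)$ and, by interpolation with the $L^2$ bound, bounded on $L^p$ for $1<p\le 2$; since the adjoint of $\mathcal T$ is convolution with $\widetilde T(x):=T(-x)$, which satisfies the very same hypotheses (note that $\theta\mapsto A(-\theta)$ is again Lipschitz with mean value zero), a duality argument then gives boundedness for $2\le p<\infty$ as well. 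First I would record that $T\in\Sz'$ is a well-defined principal-value distribution: the cancellation $\int_{\s^{d-1}}A\,d\sigma=0$ makes $\lim_{\epsilon_0\to 0+}\int_{|x|>\epsilon_0}T(x)\psi(x)\,dx$ exist for every $\psi\in\Sz$.

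For the $L^2$-bound I would factor the kernel as $T=K_0\cdot g$, where $K_0(x):=A(x/|x|)/|x|^d$ is the homogeneous (degree $-d$) Calder\'on--Zygmund kernel and $g(x):=\eta(|x|)$. Since $\eta\equiv 1$ near $0$, the function $g$ is smooth and compactly supported on $\R^d$, so $\widehat g\in\Sz$, in particular $\widehat g\in L^1$. On the other hand $A$ is bounded with mean value zero, so by the classical computation of the symbol of a mean-zero homogeneous singular integral (\cite{Stein}, Ch. II) the Fourier transform $\widehat{K_0}$ is a bounded, homogeneous of degree $0$ function, $\widehat{K_0}\in L^\infty$. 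Consequently $\widehat T=c\,\widehat{K_0}*\widehat g\in L^\infty$ (a convolution of an $L^\infty$ function with an $L^1$ function), and Plancherel's theorem gives $\|\mathcal T\varphi\|_{L^2}\le\|\widehat T\|_{L^\infty}\|\varphi\|_{L^2}$.

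For the H\"ormander condition I would first note the size bound $|T(x)|\le\|A\|_{L^\infty}\|\eta\|_{L^\infty}/|x|^d$ and the spherical cancellation $\int_{r_1\le|x|\le r_2}T(x)\,dx=0$ for all $0<r_1<r_2$, both immediate from the mean-value-zero hypothesis on $A$. The regularity is the only place where the Lipschitz assumption enters: writing $\partial_j[A(x/|x|)]=\frac1r({\hat\partial}_j A)(x/|x|)$ as in \eqref{eq:d-hat_relation} (valid a.e. by Rademacher's theorem, since $A$ is Lipschitz) and combining it with the smoothness of $\eta$ and $\partial_j(|x|^{-d})=-d\,x_j|x|^{-d-2}$, the product rule gives the a.e. gradient estimate $|\nabla T(x)|\le C/|x|^{d+1}$ throughout the support of $T$. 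The segment joining $x-y$ to $x$ lies in $\{|z|\ge|x|/2\}$ whenever $|x|\ge 2|y|$, so the mean value inequality yields $|T(x-y)-T(x)|\le C|y|/|x|^{d+1}$, and integrating in $x$ gives $\int_{|x|\ge 2|y|}|T(x-y)-T(x)|\,dx\le C$ uniformly in $y\ne 0$.

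The main obstacle is the $L^2$-boundedness: because of the principal value one cannot simply invoke Young's inequality, and the truncation $\eta$ destroys exact homogeneity, so the symbol has to be produced by the factorization $\widehat T=c\,\widehat{K_0}*\widehat g$ rather than read off directly. A secondary technical point is that $A$ is only Lipschitz, so the gradient estimate needed for the H\"ormander condition must be justified almost everywhere via \eqref{eq:d-hat_relation} rather than by a pointwise chain rule; this, however, is enough for the integral condition. With the $L^2$-bound and the H\"ormander condition established, the general Calder\'on--Zygmund theorem completes the proof.
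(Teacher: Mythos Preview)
Your proposal is correct and follows the same Calder\'on--Zygmund route as the paper, but the verifications of the two key hypotheses differ in execution. The paper appeals directly to Theorem~2 in \cite[Ch.~II, \S 3]{Stein}, whose hypotheses are the size bound $|K(x)|\le B/|x|^d$, the spherical cancellation $\int_{r_1<|x|<r_2}K=0$, and the H\"ormander condition; the $L^2$-boundedness is therefore subsumed in that citation and does not need to be established separately. You instead produce the $L^2$-bound by hand via the nice factorization $\widehat T=c\,\widehat{K_0}*\widehat g$ with $\widehat{K_0}\in L^\infty$ and $\widehat g\in L^1$, and then invoke the $L^2$+H\"ormander form of the theorem together with interpolation and duality. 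For the H\"ormander condition the paper splits $|K(x-y)-K(x)|$ into $|\Omega(x-y)-\Omega(x)|/|x-y|^d$ and $|\Omega(x)|\,\big|\,|x-y|^{-d}-|x|^{-d}\big|$, and then does a case analysis in $|y|$ that ultimately reduces the first piece (for small $|y|$) to the classical homogeneous case treated in \cite[Ch.~II, \S 4, Thm.~3]{Stein}; your a.e.\ gradient bound $|\nabla T(x)|\le C/|x|^{d+1}$ plus the mean value inequality for locally Lipschitz functions is a cleaner substitute that uses the Lipschitz hypothesis on $A$ in the same essential way. Either package gives the result; your Fourier argument is an elegant alternative to relying on the specific formulation in Stein, at the modest cost of having to justify the a.e.\ chain rule for the Lipschitz composition.
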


\begin{proof}[Proof of Lemma \ref{lem:claderon-zygmund}]
The lemma follows easily from Theorem 2 in \cite[Ch.\,II,\,\S\,3]{Stein}. 
We include the proof for completeness of the arguments. Let us set
\begin{equation}\label{eq:K}
\Omega(x):=A(x/|x|)\eta(|x|)\quad\text{\rm and}\quad K(x):=\Omega(x)/|x|^d,\quad x\in\R^d.
\end{equation}
The function $\Omega : \R^d\to\R$ has mean value zero on any sphere centered at zero in $\R^d$ and 
there exists a constant $B>0$ such that
\[
|K(x)|\le B/|x|^d
\]
for all $x\ne 0$. Hence, by Theorem 2 in \cite[Ch.\,II,\,\S\,3]{Stein}, the lemma will follow if we show that the integral
\begin{equation}\label{eq:Dini}
\int_{2|y|\le|x|}\big|K(x-y)-K(x)\big|\,dx
\end{equation}
is bounded uniformly in $y\ne 0$. Following \cite[Ch.\,II,\,\S4]{Stein} we write for $x\notin\{0, y\}$,
\begin{equation}\label{eq:stein1}
\big|K(x-y)-K(x)\big|\le\frac{\big|\Omega(x-y)-\Omega(x)\big|}{|x-y|^d}+
|\Omega(x)|\left|\frac{1}{|x-y|^d}-\frac{1}{|x|^d}\right|\,.
\end{equation}
Let us first consider the second term on the right hand side of \eqref{eq:stein1}.
For $0<2|y|\le|x|$ we have
\begin{align*}
\left|\frac{1}{|x-y|^d}-\frac{1}{|x|^d}\right|\le\big||x|-|x-y|\big|\sum_{k+l\le d-1;k,l\ge 0}\frac{1}{|x|^{d-k}|x-y|^{d-l}}
\le C_1\frac{\big||x|-|x-y|\big|}{|x|^{d+1}}
\end{align*}
where $C_1>0$ is a constant depending on $d\ge 3$ and where we used that $|x-y|\ge|x|/2$ for $2|y|\le|x|$.
This implies that there exist constants $C_2, C_3>0$ such that for $y\ne 0$,
\begin{align*}
\int_{2|y|\le|x|}|\Omega(x)|\left|\frac{1}{|x-y|^d}-\frac{1}{|x|^d}\right|\,dx&
\le C_2\int_{2|y|\le|x|}\frac{\big||x|-|x-y|\big|}{|x|^{d+1}}\,dx\\
&\le C_2\int_{|x'|\ge 2}\frac{\big||x'|-|x'-y'|\big|}{|x'|^{d+1}}\,dx'\le C_3,
\end{align*}
where we pass to the variable $x':=x/|y|$ in the integral, set $y':=y/|y|\in \s^{d-1}$, and use
that $\big||x'-y'|-|x'|\big|\le |y'|=1$.

Let us now consider the first term on the right hand side of \eqref{eq:stein1}.
Since $\eta\equiv 1$ in an open neighborhood of zero, there exists $\rho_1>0$ such that
$\Omega(x-y)=\Omega(x)=A(x/|x|)$ for $|x|\le\rho_1$ and $2|y|\le|x|$.
Similarly, since $\eta$ has compact support, there exists $\rho_2>0$ such that $\Omega(x-y)=\Omega(x)=0$ for
$|x|\ge\rho_2$ and $2|y|\le|x|$.
Now, take $y\ne 0$. If $0<|y|\le\rho_1/2$ then
\begin{align}\label{eq:stein2}
\int_{2|y|\le|x|}\frac{\big|\Omega(x-y)-\Omega(x)\big|}{|x-y|^d}\,dx&=
\int_{2|y|\le|x|\le\rho_1}\frac{\big|A(x-y/|x-y|)-A(x/|x|)\big|}{|x-y|^d}\,dx\nonumber\\
&+\int_{\rho_1\le|x|\le\rho_2}\frac{\big|\Omega(x-y)-\Omega(x)\big|}{|x-y|^d}\,dx\,.
\end{align}
The uniform boundedness for $0<|y|\le\rho_1/2$ of the first term on the right hand side of \eqref{eq:stein2} 
is proved in \cite{Stein} -- see the proof of Theorem 3 in \cite[Ch.\,II,\,\S\,4]{Stein}, where one has to use
that $A : \s^{d-1}\to\R$ is Lipschitz continuous. The uniform boundedness for $|y|\le\rho_1/2$ of the second term 
follows since $|x-y|\ge\rho_1/2>0$ in the domain of integration $\rho_1\le|x|\le\rho_2$ (and hence the integrand
is a continuous function of its arguments) and since the set $\{y\in\R^d\,|\,|y|\le\rho_1/2\}$ is compact in $\R^d$. 
Now, assume that $\rho_1/2\le|y|\le\rho_2/2$. Then
\begin{equation}\label{eq:stein3}
\int_{2|y|\le|x|}\frac{\big|\Omega(x-y)-\Omega(x)\big|}{|x-y|^d}\,dx=
\int_{2|y|\le|x|\le\rho_2}\frac{\big|\Omega(x-y)-\Omega(x)\big|}{|x-y|^d}\,dx
\end{equation}
The integral on the right hand side is uniformly bounded for $\rho_1/2\le|y|\le\rho_2/2$ since
$|x-y|\ge|x|/2\ge\rho_1/2>0$ for $2|y|\le|x|$ and since the set 
$\{y\in\R^d\,|\,\rho_1/2\le|y|\le\rho_2/2\}$ is compact in $\R^d$.
The case when  $|y|\ge\rho_2/2$ is trivial since then $|x|\ge\rho_2$, and hence the integral on 
the left hand side of \eqref{eq:stein3} vanishes. This completes the proof of the lemma.
\end{proof}

The following lemma is used in Section \ref{sec:navier-stokes}.

\begin{Lem}\label{lem:T-map_real}
Assume that $m>2+\frac{d}{p}$, $N\in\Z_{\ge 0}$, and  $1<p<\infty$.
For $u\in C\big([0,T),\A^{m,p}_{N;0}\big)$, $u(0)=u_0$, consider the transformation
\begin{equation}\label{eq:T-map_real}
\mathcal{T}_{\rm real}(u)\equiv S_\nu(t)u_0+\int_0^t S_\nu(t-s) F\big(u(s)\big)\,ds,\quad t\in[0,T).
\end{equation}
Then 
$\mathcal{T}_{\rm real}(u)\in
C\big([0,T),\A^{m,p}_{N;0}\big)\cap C^1\big((0,T),\A^{m-2,p}_{N;0}\big)$, $\mathcal{T}_{\rm real}(u)(0)=u_0$, 
and it satisfies the relation $\mathcal{T}_{\rm real}(u)_t=\nu\Delta\mathcal{T}_{\rm real}(u)+F(u)$.
\end{Lem}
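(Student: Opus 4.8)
The plan is to split $\mathcal{T}_{\rm real}(u)=v_1+w$ into the free evolution $v_1(t):=S_\nu(t)u_0$ and the Duhamel term $w(t):=\int_0^t S_\nu(t-s)F\big(u(s)\big)\,ds$, and to treat each separately. First I record that, by Lemma~\ref{lem:lipschitz} and the continuity of $u$, the inhomogeneity $f(t):=F\big(u(t)\big)$ defines a continuous curve $f\in C\big([0,T),\A^{m-1,p}_{1,N+1;0}\big)$, and that $\A^{m-1,p}_{1,N+1;0}\subseteq\A^{m-1,p}_{N;0}\subseteq\A^{m-2,p}_{N;0}$ by Proposition~\ref{pr:A-properties}(i). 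For the free part, Theorem~\ref{th:SonA} gives that $\{S_\nu(t)\}_{t\ge0}$ is an analytic (hence strongly continuous) semigroup on $\A^{m,p}_{N;0}$, so $v_1\in C\big([0,\infty),\A^{m,p}_{N;0}\big)$ with $v_1(0)=u_0$, and for $t>0$ analyticity yields $v_1'(t)=\nu\Delta S_\nu(t)u_0=\nu\Delta v_1(t)\in\A^{m-2,p}_{N;0}$, whence $v_1\in C^1\big((0,T),\A^{m-2,p}_{N;0}\big)$.

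For the Duhamel term I would first establish that $w$ is well defined and continuous with values in the \emph{better} space $\A^{m,p}_{N;0}$. Applying Corollary~\ref{coro:S(t)-estimatesA*} with $\tau=1$ gives $\|S_\nu(t-s)f(s)\|_{\A^{m,p}_{N;0}}\le C\,e^{\omega(t-s)}\,|t-s|^{-1/2}\,\|f(s)\|_{\A^{m-1,p}_{1,N+1;0}}$, and since $|t-s|^{-1/2}$ is integrable the integral defining $w(t)$ converges as an improper Riemann integral in $\A^{m,p}_{N;0}$. A routine splitting of the integral, together with the strong continuity of $S_\nu$ and the bound $\|w(t)\|_{\A^{m,p}_{N;0}}\le C\sqrt{t}\,\sup_{[0,T)}\|f\|$, then shows $w\in C\big([0,T),\A^{m,p}_{N;0}\big)$ with $w(0)=0$.

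The heart of the argument is the differentiability of $w$. For $t\in(0,T)$ and small $h>0$ I would use the semigroup property to write $w(t+h)=S_\nu(h)w(t)+\int_t^{t+h}S_\nu(t+h-s)f(s)\,ds$, so that the difference quotient decomposes as $\frac{w(t+h)-w(t)}{h}=\frac{S_\nu(h)-I}{h}\,w(t)+\frac1h\int_t^{t+h}S_\nu(t+h-s)f(s)\,ds$. The second term equals $\frac1h\int_0^h S_\nu(\sigma)f(t+h-\sigma)\,d\sigma$, which converges to $f(t)$ in $\A^{m-1,p}_{1,N+1;0}$ (hence in $\A^{m-2,p}_{N;0}$) by the continuity of $f$ and the strong continuity of $\{S_\nu(\sigma)\}$ on $\A^{m-1,p}_{1,N+1;0}$ (Theorem~\ref{th:SonA} with the indices $(m-1,1,N+1)$). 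For the first term I would invoke the crucial smoothing observation: by the previous paragraph $w(t)\in\A^{m,p}_{N;0}$, and by the inclusion $\A^{m,p}_{N;0}\subseteq{\widetilde\A}^{m,p}_{N;0}$ this places $w(t)$ in the domain ${\widetilde\A}^{m,p}_{N;0}$ of the generator $\nu\Delta$ of $\{S_\nu(t)\}$ on $\A^{m-2,p}_{N;0}$ (Theorem~\ref{th:SonA} with $m\to m-2$, together with Corollary~\ref{coro:exp=S} to identify the two restrictions of the $\Sz'$-flow as the same operator). The defining property of the generator then gives $\frac{S_\nu(h)-I}{h}w(t)\to\nu\Delta w(t)$ in $\A^{m-2,p}_{N;0}$. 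Thus the right derivative of $w$ at $t$ exists and equals $\nu\Delta w(t)+f(t)$; the analogous computation with $h<0$ handles the left derivative, and since $t\mapsto\nu\Delta w(t)+f(t)$ is continuous into $\A^{m-2,p}_{N;0}$ (because $\Delta:\A^{m,p}_{N;0}\to\A^{m-2,p}_{N;0}$ is bounded and $w,f$ are continuous), the standard fact that a continuous right derivative forces a $C^1$ curve gives $w\in C^1\big((0,T),\A^{m-2,p}_{N;0}\big)$ with $w'=\nu\Delta w+f$.

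Adding the two contributions yields $\mathcal{T}_{\rm real}(u)\in C\big([0,T),\A^{m,p}_{N;0}\big)\cap C^1\big((0,T),\A^{m-2,p}_{N;0}\big)$, $\mathcal{T}_{\rm real}(u)(0)=u_0$, and $\mathcal{T}_{\rm real}(u)_t=\nu\Delta\mathcal{T}_{\rm real}(u)+F(u)$, as claimed. The main obstacle -- and the step where the specific structure of the problem is essential -- is the convergence of $\frac{S_\nu(h)-I}{h}w(t)$: it relies on the Duhamel integral gaining a \emph{full} order of regularity (landing in $\A^{m,p}_{N;0}$, one order above the base space $\A^{m-2,p}_{N;0}$), which is precisely what identifies $w(t)$ as an element of the generator's domain and lets us avoid the H\"older-in-time continuity of $f$ that the classical inhomogeneous parabolic theory would otherwise require.
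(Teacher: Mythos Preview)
Your argument is correct and takes a genuinely different route from the paper's proof. The paper truncates the Duhamel integral, setting $w_k(t):=\int_0^{\alpha_k t}S_\nu(t-s)F(u(s))\,ds$ with $\alpha_k=1-\frac{1}{2k}$, so that the integrand is nonsingular on the closure of the domain of integration; one may then differentiate under the integral sign (using Corollary~\ref{coro:S(t)-estimatesA*}) to obtain $w_k'$ explicitly, and finally pass to the limit $k\to\infty$ via uniform convergence of both $w_k$ and $w_k'$. By contrast, you exploit the semigroup law $w(t+h)=S_\nu(h)w(t)+\int_t^{t+h}S_\nu(t+h-s)f(s)\,ds$ directly, the key observation being that the Duhamel term already lands in $\A^{m,p}_{N;0}\subseteq\widetilde{\A}^{m,p}_{N;0}=D(\Delta|_{\A^{m-2,p}_{N;0}})$, which immediately gives $\frac{S_\nu(h)-I}{h}\,w(t)\to\nu\Delta w(t)$ in $\A^{m-2,p}_{N;0}$. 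Both approaches hinge on the same smoothing estimate (Corollary~\ref{coro:S(t)-estimatesA*} with $\tau=1$); yours is closer in spirit to abstract semigroup arguments \`a la Pazy, while the paper's truncation is a more hands-on limit procedure. Your route is arguably cleaner once one realizes $w(t)$ sits in the generator's domain, whereas the paper's approach is entirely self-contained and avoids invoking the ``continuous one-sided derivative implies $C^1$'' fact. One small remark: your aside that ``the analogous computation with $h<0$ handles the left derivative'' is slightly more delicate than the right-derivative case (it requires uniform boundedness of $\frac{S_\nu(k)-I}{k}$ on the domain together with continuity of $t\mapsto w(t)$ in the graph norm), but since you correctly fall back on the continuous-right-derivative criterion this does not affect the validity of the argument.
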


\begin{proof}[Proof of Lemma \ref{lem:T-map_real}]
For $k\ge 1$ we set $\alpha_k:=1-\frac{1}{2k}$ and define
\begin{equation}\label{eq:w_k-real}
w_k(t):=\int_0^{\alpha_k t} S_\nu(t-s) F\big(u(s)\big)\,ds\quad\forall t\in(0,T)\,.
\end{equation}
Consider the open set $W_k:=\big\{(t,s)\,\big|\,0<t<T,\,0<s<\alpha_{k+1} t\big\}$ in $\R^2$ and note that
$(t,\alpha_k t)\in W_k$ and $W_k\subseteq W_{k+1}$ for $k\ge 1$. 
It follows from Lemma \ref{lem:lipschitz} and Corollary \ref{coro:S(t)-estimatesA*} that 
the integrand in \eqref{eq:w_k-real},
\[
\mathcal{F} : W_k\to\A^{m,p}_{N;0},\quad (t,s)\mapsto S_\nu(t-s) F\big(u(s)\big),
\]
and its partial derivative in the direction of the first variable
\[
(\partial_t\mathcal{F}_k) : W_k\to\A^{m-2,p}_{N;0},\quad (t,s)\mapsto\nu\Delta\Big(S_\nu(t-s) F\big(u(s)\big)\Big),
\]
are continuous maps. This implies that for any $k\ge 1$,
\[
w_k\in C\big([0,T),\A^{m,p}_{N;0}\big)\cap C^1\big((0,T),\A^{m-2,p}_{N;0}\big)
\]
and
\begin{equation}\label{eq:w_k'-real}
\begin{array}{ccc}
w_k'(t)&=&\alpha_k S_\nu(t/2k) F\big(u(\alpha_k t))+\int_0^{\alpha_k t}\nu\Delta\Big(S_\nu(t-s) F\big(u(s)\big)\Big)\,ds\\
&=&\alpha_k S_\nu(t/2k) F\big(u(\alpha_k t))+\nu\Delta\Big(\int_0^{\alpha_k t}S_\nu(t-s) F\big(u(s)\big)\,ds\Big)
\end{array}
\end{equation}
where the first integral in \eqref{eq:w_k'-real} exists as a Riemann integral in $\A^{m-2,p}_{N;0}$ and
the second integral exists as a Riemann integral in $\A^{m,p}_{N;0}$.
Now, one can use Theorem \ref{th:SonA}, Corollary \ref{coro:S(t)-estimatesA}, Corollary \ref{coro:S(t)-estimatesA*},
Lemma \ref{lem:lipschitz}, to conclude from \eqref{eq:w_k-real} and \eqref{eq:w_k'-real} that
\[
\begin{tikzcd}
w_k\arrow[r, "\A^{m,p}_{N;0}"]&w
\end{tikzcd}
\quad\text{\rm and}\quad 
\begin{tikzcd}
w_k'\arrow[r, "\A^{m-2,p}_{N;0}"]&g
\end{tikzcd}
\quad \text{\rm as}\quad k\to\infty,
\]
where the convergence is uniform on $(0,T')$ for any $0<T'<T$,
\[
w(t)\equiv\int_0^t S_\nu(t-s) F\big(u(s)\big)\,ds,
\]
and $g(t)\equiv F\big(u(t))+\nu\Delta\Big(\int_0^t S_\nu(t-s) F\big(u(s)\big)\,ds\Big)$ for $t\in(0,T)$.
This implies that $w\in C\big((0,T),\A^{m,p}_{N;0}\big)\cap C^1\big((0,T),\A^{m-2,p}_{N;0}\big)$ and 
\begin{equation}\label{eq:w-relation}
w_t=F\big(u(t))+\nu\Delta\Big(\int_0^t S_\nu(t-s) F\big(u(s)\big)\,ds\Big),\quad t\in(0,T)\,.
\end{equation}
If we set $w(0)=0$ then $w : [0,T)\to\A^{m,p}_{N;0}$ is continuous at $t=0$ by Corollary \ref{coro:S(t)-estimatesA}.
Finally, the relation $\mathcal{T}_{\rm real}(u)_t=\nu\Delta\mathcal{T}_{\rm real}(u)+F(u)$ on $(0,T)$ follows 
directly from \eqref{eq:w-relation} and the fact that $\big(S_\nu(t) u_0\big)_t=\nu\Delta\big(S_\nu(t) u_0\big)$ 
by Theorem \ref{th:SonA}.
\end{proof}

One has the following useful characterization of the domain $\widetilde{W}^{m+2,p}_\delta$ of the Laplace operator 
appearing in Theorem \ref{th:sectorial_W}.

\begin{Lem}\label{lem:domains}
Assume that $m\in\Z_{\ge 0}$, $\delta\in\R$, $0\le n\le N$, and $1<p<\infty$.
Then, $\widetilde{W}^{m+2,p}_\delta=W^{m,p}_\delta\cap W^{m+1,p}_{\delta-1}\cap W^{m+2,p}_{\delta-2}$.
\end{Lem}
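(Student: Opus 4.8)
The plan is to deduce both inclusions directly from the ``step-up'' characterization in Lemma~\ref{lem:equivalent_W-norms}, namely that for every integer $m\ge 0$ and every $\sigma\in\R$ one has $u\in W^{m+1,p}_\sigma$ if and only if $u\in W^{m,p}_\sigma$ and $\partial_k u\in W^{m,p}_{\sigma+1}$ for all $1\le k\le d$. The only other ingredient I would use is the trivial monotonicity of the spaces in the weight: since $\x\ge 1$ we have $\x^{(\sigma-1)+|\alpha|}\le\x^{\sigma+|\alpha|}$ pointwise, so $\|g\|_{W^{m,p}_{\sigma-1}}\le\|g\|_{W^{m,p}_{\sigma}}$ on $C_c^\infty$, and hence a bounded inclusion $W^{m,p}_\sigma\subseteq W^{m,p}_{\sigma-1}$ of the completions inside $\Sz'$. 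By the definition \eqref{def:D(Lap|W)}, $f\in{\widetilde W}^{m+2,p}_\delta$ means precisely that $f$, $\partial_j f$, and $\partial_j\partial_k f$ all lie in $W^{m,p}_\delta$ for all $j,k$; the task is then to match this list of conditions against membership in the three spaces on the right.

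For the inclusion ${\widetilde W}^{m+2,p}_\delta\subseteq W^{m,p}_\delta\cap W^{m+1,p}_{\delta-1}\cap W^{m+2,p}_{\delta-2}$, I would assemble the two nontrivial memberships by running Lemma~\ref{lem:equivalent_W-norms} in the forward direction. Membership $f\in W^{m+1,p}_{\delta-1}$ follows because $f\in W^{m,p}_\delta\subseteq W^{m,p}_{\delta-1}$ and $\partial_k f\in W^{m,p}_\delta$. For $f\in W^{m+2,p}_{\delta-2}$ one applies the lemma with $m$ replaced by $m+1$ and weight $\delta-2$: it suffices that $f\in W^{m+1,p}_{\delta-2}$ and $\partial_k f\in W^{m+1,p}_{\delta-1}$ for all $k$. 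The first reduces, by one more application of the lemma, to $f\in W^{m,p}_{\delta-2}$ together with $\partial_k f\in W^{m,p}_{\delta-1}$, both of which follow from $f,\partial_k f\in W^{m,p}_\delta$ and the weight monotonicity; the second reduces to $\partial_k f\in W^{m,p}_{\delta-1}$ and $\partial_j\partial_k f\in W^{m,p}_\delta$, which are again among the hypotheses.

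For the reverse inclusion I would run the same lemma in the opposite direction to peel derivatives off. Given $f\in W^{m+1,p}_{\delta-1}$, Lemma~\ref{lem:equivalent_W-norms} (with weight $\delta-1$) yields $\partial_k f\in W^{m,p}_\delta$ for all $k$; given $f\in W^{m+2,p}_{\delta-2}$, two nested applications (first with $m+1$ and weight $\delta-2$, extracting $\partial_k f\in W^{m+1,p}_{\delta-1}$, then with $m$ and weight $\delta-1$ applied to $\partial_k f$) yield $\partial_j\partial_k f\in W^{m,p}_\delta$ for all $j,k$. Together with the trivial $f\in W^{m,p}_\delta$, this is exactly the condition $f\in{\widetilde W}^{m+2,p}_\delta$. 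The same computation tracks the norms, so the identification is in fact as Banach spaces.

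I do not expect a genuine obstacle here: the argument is a finite, purely formal iteration of Lemma~\ref{lem:equivalent_W-norms}. The only point requiring care is the bookkeeping of indices --- keeping track of which pair $(\text{order},\text{weight})$ each intermediate space carries and invoking the weight monotonicity only in the weight-lowering direction --- together with checking that at every stage the integer regularity index stays $\ge 0$ so that Lemma~\ref{lem:equivalent_W-norms} applies, which it does since all indices range over $\{m,m+1\}$.
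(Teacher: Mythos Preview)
Your argument is correct and is precisely the sort of routine unpacking the paper has in mind: the authors omit the proof entirely, remarking only that it ``follows directly from the definition of the involved spaces,'' and your iteration of Lemma~\ref{lem:equivalent_W-norms} together with the weight monotonicity is exactly that direct verification. There is nothing to add.
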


\noindent The proof of this lemma follows directly from the definition of the involved spaces and will be thus omitted.



\begin{thebibliography}{99}   

\bibitem{AMR-B} J. Arrieta, N. Moya, A. Rodriguez-Bernal, {\em Asymptotic behavior of reaction-diffusion equations in weighted 
Sobolev spaces}, preprint, 2009

\bibitem{Bartnik}  R. Bartnik, {\em The mass of an asymptotically flat manifold},
{Comm. Pure Appl. Math}, \textbf{39} (1986), 661-693

\bibitem{BranMe} L. Brandolese, Y. Meyer, {\em On the instantaneous spreading for the Navier-Stokes system in 
the whole space. A tribute to J. L. Lions.},  ESAIM Control Optim. Calc. Var., $\bf 8$(2002), 273-285

\bibitem{Bran1} L. Brandolese, {\em Far field geometric structures of 2D flows with localized vorticity},
Math. Ann., $\bf 383$(2022), no. 1-2, 699-714

\bibitem{Bran2} L. Brandolese, {\em Hexagonal structures in 2D Navier-Stokes flow},
Comm. Partial Differential Equations, $\bf 47$(2022), no. 6, 1070-1097

\bibitem{Chemin} J.-Y. Chemin, {\em Fluides Parfaits Incompressibles}, Ast\'erisque, $\bf 230$, SMF, 1995

\bibitem{DobrShaf} S. Dobrokhotov, A. Shafarevich, {\em Some integral identities and remarks on the decay
at infinity of the solutions of the Navier-Stokes Equations in the entire space},
Russian J. Math. Phys., $\bf 2$(1994), no. 1, 133-135

\bibitem{EN} K-J. Engel, R. Nagel, {\em One-Parameter Semigroups for Linear Evolution Operators}, Springer,-Verlag, New York, 2000

\bibitem{Friedman} A. Friedman, {\em Partial Differential Equations}, Holt, Reinhart, and Winston, Inc., 1969

\bibitem{Henry} D. Henry, {\em Geometric Theory of Semilinear Parabolic Equations}, Springer-Verlag, 1981

\bibitem{Kato} T. Kato, {\em Perturbation Theory for Linear Operators}, Springer-Verlag, New York, 1966

\bibitem{Kato2} T. Kato, {\em Quasi-linear equations of evolution, with applications to partial differential equations}
Lecture Notes in Math., $\bf 448$, Springer, Berlin, 1975

\bibitem{KS} A. Korolev, {\v S}verak, {\em On the large-distance asymptotics of steady state solutions of the Navier-Stokes 
equations in 3D exterior domains}, Ann. Inst. H. Poincar\'e C Anal. Non. Lin\'eaire, $\bf 28$(2011), no. 2, 3-3-313

\bibitem{KR} I. Kukavica, E. Reis, {\em Asymptotic expansion for solutions of the Navier-Stokes equations 
with potential forces}, J. Differential Equations, $\bf 250$(2011), no. 1, 607-622

\bibitem{LaLa} G. Ladas, V. Lakshmikantham, {Differential Equations in Abstract Spaces}, Academic Press, New York and London, 1972

\bibitem{Lebedev} N. Lebedev, {\em Special Functions and their Applications}, Dover, New York, 1972

\bibitem{LLMP}  L. Lorenzi, A. Lunardi, G. Metafune, D. Pallara, {\em Analytic semigroups and reaction-diffusion equations}, 
Internet Seminar 2004-2005

\bibitem{MB} A. Majda, A. Bertozzi, {\em Vorticity and Incompressible Flow}, Cambridge University Press, 2002

\bibitem{McOwen1}  R. McOwen, {\em The behavior of the Laplacian on weighted Sobolev spaces},
{Comm. Pure Appl. Math.}, {\bf 32} (1979), 783--795.


\bibitem{McOwenTopalov1} R. McOwen, P. Topalov, {\em  Asymptotics in shallow water waves},
Discrete Contin. Dyn. Syst., $\bf 35$(2015), no. 7, 3103-3131

\bibitem{McOwenTopalov2} R. McOwen, P. Topalov, {\em Groups of asymptotic diffeomorphisms}, 
Discrete Contin. Dyn. Syst., $\bf 36$(2016), no. 11, 6331-6377

\bibitem{McOwenTopalov3} R. McOwen, P. Topalov, {\em Spatial asymptotic expansions in the incompressible Euler equation}, 
Geom. Funct. Anal. {\bf 27}(2017), 637-675
 
\bibitem{McOwenTopalov4} R. McOwen, P. Topalov, {\em Perfect fluid flows on $\R^d$ with growth/decay conditions at infinity},
Math. Ann., $\bf 383$(2022), no. 3-4, 1451-1488
 
\bibitem{McOwenTopalov5} R. McOwen, P. Topalov, {\em Spatial asymptotics and equilibria of heat flow on $\R^d$},
J. Math. Anal. Appl., $\bf 518$(2023), no. 1, Paper No. 126668

\bibitem{Pazy} A. Pazy, {\em Semigroups of Linear Operators and Applications to Partial Differential Equations}, S
pringer-Verlag, New York, 1983


\bibitem{PK} J. Poschel, T. Kappeler, {\em KdV\&KAM}, Springer, 2003

\bibitem{ReedSimon} M. Reed, B. Simon, {\em Fourier Analysis, Self-Adjointness}, Academic Press, 1975

\bibitem{RenardyRogers} M. Renardy, R. Rogers, {\em An Introduction to Partial Differential Equations}, 
Springer-Verlag, New York, 1993
 
\bibitem{Segal} I. Segal, {\em Non-linear semi-groups}, Ann. Math., $\bf 78$(1963), no. 2, 339-364

\bibitem{SultanTopalov} S. Sultan, P. Topalov, {\em On the asymptotic behavior of solutions of the 2d Euler equation},
Journal of Differential Equations, $\bf 269$(2020), no. 6, 5280--5337

\bibitem{Taylor} M. Taylor, {Partial Differential Equations I, Basic Theory}, 2nd ed., Springer, 2011

\bibitem{Shubin} M. Subin, {Invitation to Partial Differential Equations}, AMS, 2022

\bibitem{Stein} E. Stein, {Singular Integrals and Differentiability Properties of Functions},
Princeton University Press, New Jersey, 1970

\bibitem{Yoshida} K. Yoshida, {\em Functional Analysis}, 3rd ed., Springer-Verlag, New York, 1971

\end{thebibliography}
\end{document}